\newtheorem{thm}{Theorem}[section]
\newtheorem{lemma}[thm]{Lemma}
\newtheorem{cor}[thm]{Corollary}
\newtheorem{remark}[thm]{Remark}
\newtheorem{definition}[thm]{Definition}
\numberwithin{equation}{section}
\newcommand\bs[1]{{\boldsymbol{#1}}}
\def\RR{\mathbb R}
\def\ds{\displaystyle}
\def\R{\boldsymbol{R}}
\def\Q{\boldsymbol{Q}}
\def\la{\lambda}
\def\ep{\varepsilon}
\newcommand{\eu}{\textrm{e}}
\newcommand{\lir}{{\lim_{r \to \infty}}}
\newcommand{\liro}{{\lim_{r \to 0}}}
\newcommand{\lit}{{\lim_{t \to +\infty}}}
\newcommand{\lito}{{\lim_{t \to -\infty}}}
\newcommand\sol[3]{${\cal #1}\!\stackrel{{#2}}{\_}\!{\sf #3d}$}
\newcommand{\Rsol}{$\mathcal {R}$}
\newcommand{\Ssol}{$\mathcal {S}$}
\newcommand{\fdsol}{${\sf fd}$}
\newcommand{\sdsol}{${\sf sd}$}
\begin{document}
\title{On a diffusion model with absorption and production.}
\author {Matteo Franca\thanks{Dipartimento di Scienze Matematiche,
Universit\`a Politecnica delle Marche, Via Brecce Bianche 1, 60131 Ancona -
Italy. Partially supported by G.N.A.M.P.A. - INdAM (Italy) and
MURST (Italy)} \hspace{1mm} and
Andrea Sfecci
\thanks{Dipartimento di Scienze Matematiche,
Universit\`a Politecnica delle Marche, Via Brecce Bianche 1, 60131 Ancona -
Italy. Partially supported by G.N.A.M.P.A.}
}
  \maketitle
\pagestyle{myheadings}

\begin{abstract}
We discuss the structure of radial solutions of some superlinear elliptic equations which model
diffusion phenomena when both absorption and production are present.
 We focus our attention on solutions defined in $\RR$ (regular) or in $\RR\setminus\{0\}$
  (singular) which are infinitesimal at infinity, discussing also their asymptotic behavior.
  The phenomena we find are present only if absorption and production coexist, i.e., if
  the   reaction term changes sign.  Our results are then generalized to include  the case where Hardy potentials are considered.
\end{abstract}
\vspace{5mm} \textbf{Key Words:} $\,$ supercritical equations, Hardy potentials, radial
solution, regular/singular ground
states, Fowler transformation, invariant manifold.\\
\textbf{MR (2000) Subject Classification}: $\,$ 35j70, 35j10, 37d10.
   \vspace{5mm}
\section{Introduction}
In this paper we are interested in structure results for  radial solutions for a family of equations whose prototype has the following form
\begin{equation} \label{laplace} \tag{L}
\Delta u({\bf x})+ k(|\textbf{x}|) u|u|^{q-2}=0 \,, \qquad k(|\textbf{x}|)=
 \begin{cases}
K_1 & |\textbf{x}| \le 1 \\
 K_2   & |\textbf{x}| > 1
 \end{cases}
\end{equation}
where ${\bf x} \in
\mathbb{R}^n$, with  $n>2$, $q>2$, $K_1 K_2<0$;  we assume either  $K_1<0<K_2$ and $q>2^*$, or
$K_2<0<K_1$ and $2_*<q<2^*$, where  $2_*:=2\frac{n-1}{n-2}$
and $2^*:=\frac{2n}{n-2}$ are respectively the Serrin and the Sobolev critical exponent.

Since we just deal with radial solutions we will indeed consider the following singular ordinary differential equation
\begin{equation} \label{eqrad} \tag{Lr}
u'' +\frac{n-1}{r} u'  + k(r) u|u|^{q-2} = 0\,,
 \qquad k(r)=
 \begin{cases}
K_1 & r \le 1\,, \\
 K_2   & r > 1\,,
 \end{cases}
\end{equation}
where, abusing the notation, we have set $u(r)=u({\bf x})$ for $|{\bf x}|=r$, and
$'$ denotes differentiation with respect to $r$.

\medbreak

The interest in equations of the family~(\ref{laplace}) started long ago from
nonlinearities   where
 $k$ is a constant, either negative or positive, and then it was generalized to include the case where
 $k$ varies with $r$, thus finding several different possible structures for the solutions,
 see e.g. \cite{AQ,BJ2,B1,DN,F1,F6,Fprep,Fdie,JPY1,JK,Y1996} for a far from being exhaustive bibliography.
 Nowadays it has become a broadly studied   topic  and the discussion now includes a wide family of non-linearities see e.g.
 ~\cite{DF,GNN,JPY2,JPY3}  and references therein.
 Radial solutions play a key role for~\eqref{laplace}, since in many cases, e.g. $k(r)\equiv K>0$,
 positive solutions have to be radial (but also in many situations in which $k$ is allowed to vary, see e.g.~\cite{B1,GNN,LN}).
 They are also crucial to determine the threshold between fading and blowing initial data in the associated parabolic problem,
 see e.g.~\cite{DLL,W93}.

  It can be shown that,
 when $q>2$, positive solutions exhibit two behaviors as $r \to 0$
 and as $r \to \infty$.
 In particular, $u(r)$ may be
 a \emph{regular solution}, i.e. $u(0)=d>0$ and $u'(0)=0$,
 or a \emph{singular solution}, i.e. $\liro u(r)=+\infty$,
 a \emph{fast decay (f.d.) solution}, i.e. $\lir u(r) r^{n-2}=L$,
 or a \emph{slow decay (s.d.) solution}, i.e. $\lir u(r)r^{n-2}=+\infty$.

Moreover, a regular, respectively singular, positive solution $u(r)$ defined for any $r > 0$ such that $\lir u(r)=0$ is usually called \emph{ground states} (G.S.), resp. \emph{singular ground states} (S.G.S.).
 In the whole paper we use the following notation: we denote by $u(r,d)$ the regular solution of~(\ref{eqrad})
such that $u(0,d)=d$, and by $v(r,L)$ the fast decay solution such that
$\lir r^{n-2}v(r,L)=L$.

Equation~\eqref{laplace} is a widely studied topic and find many applications in different contexts. E.g., it can model
the equilibria for a nonlinear heat equation. In this case $u$ is the temperature and $k u|u|^{q-2}$ represents
 a termo-regulated reaction which produces heat  when $k>0$, or absorbs heat when $k<0$.

 It can also model the equilibria reached by a series of chemical reactions, see e.g.
 \cite{Mu2} for a derivation of the model (in particular Chapter~7, and especially~7.4), see also \cite[§1]{Mu1}. In this case $u$ represents the density of a substance~$A$
 reacting with   substrates $B$ and $D$ according to the following scheme:
 \begin{equation}
 \label{reaction}
 (q-1)A +B \underset{c_1}\to q A+C  \, , \qquad   (q-1)A +D \underset{c_2}\to E\,.
 \end{equation}
 In the first reaction $c_1$ we have $(q-1)$ particles $A$ which react with some substrate $B$ to produce $C$ and a larger number of particles of $A$ (say $q$ in this case). In the second, $c_2$,
 we have $(q-1)$ particles $A$ which react with some substrate $D$ to produce $E$
(in fact we obtain an equation of the same type also when  the substrates $B,D$ and
 the substance $C$ are not present).
 The two reactions can be modeled respectively
 by
\begin{equation}\label{parabolico}
\begin{array}{l}
u_t=\Delta u+ \nu_1 \mu_B \, u^{q-1} \,,\\
u_t=\Delta u- \nu_2(q-1)\mu_D \, u^{q-1} \,,
\end{array}
\end{equation}
where $\nu_1$ and $\nu_2$ are the velocities of the reactions,  and $\mu_B$,  $\mu_D$ are the density of the substrates which are assumed to be constant
(and they can be chosen to be $1$).
Here we are interested in the equilibria reached by $u$, assuming that we have diffusion ($\Delta u$), production inside a ball
(e.g., when $K_1=\nu_1 \mu_B>0$) and absorption outside (when $K_2=-\nu_2(q-1) \mu_D<0$), or the symmetric situation.
Equation \ref{parabolico} can model also  a series of unknown reactions starting
from the substances on the left of the arrow in \eqref{reaction} and ending with the ones on the right of the arrow:
Usually in chemistry   and especially in biochemistry
  we do not know all the intermediate steps which are actually taking place, so
  the models are constructed using just  the starting reagents and the final products we find.

A further simple  case (from the modeling viewpoint)
 we are able to deal with is the following type of reaction together
with its inverse.
$$ (q-1)A +B \underset{c_1}\to q A \, , \qquad   q A \underset{c_2}\to (q-1)A +B$$
In this case the corresponding model will be given by the following equations:
\begin{equation}\label{parabolico2}
\begin{array}{l}
u_t=\Delta u+ \nu_1 \mu_B \, u^{q-1} \,, \\
u_t=\Delta u- \nu_2 \, u^{q} \,,
\end{array}
\end{equation}
where again $\nu_1$ and $\nu_2$ are the velocities of the reactions,   $\mu_B$, is the density of the substrate which is assumed to be constant.
Again we are interested in the equilibria reached by $u$, assuming that we have, production inside a ball
(e.g., when $K_1=\nu_1 \mu_B>0$) and absorption outside (when $K_2=-\nu_2<0$), or the symmetric situation.

 As it is to be expected, solutions of~\eqref{eqrad} undergo to several bifurcations as $k$ changes sign and
 as $q$
  passes through
 some critical values, such as $2_*<2^*$. In particular all positive regular solutions are increasing and exist just in a ball of variable size if $k\equiv K<0$,
 while they are decreasing if $k\equiv K>0$; further G.S. with f.d. exist just for $q=2^*$.

As a first consequence of our main results we get the following.
\begin{cor}\label{cor1}
Assume   $q>2^*$, $K_1<0$ and $K_2>0$, in equation~\eqref{eqrad}; then there is a sequence  $D_k \nearrow D_\infty<\infty$ such that $u(r,D_0)$
is a G.S. with f.d., and for any $k \in \mathbb{N}$, $u(r,D_{k})$ is a regular-f.d. solution with
 exactly $k$ non degenerate zeroes. Moreover
 $u(r,d)$ is a G.S. with s.d. for any $0<d<D_0$, and
 $u(r,d)$ is a regular-s.d. solution with exactly $k$ non-degenerate zeroes
    whenever $D_{k-1}<d<D_{k}$, for any $k \ge 1$.
\end{cor}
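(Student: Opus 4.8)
The plan is to recast equation~\eqref{eqrad} as a switched autonomous planar system via the Emden--Fowler change of variables $x(t)=r^{\alpha}u(r)$, $y(t)=r^{\alpha+1}u'(r)$, $t=\ln r$, with $\alpha=\frac{2}{q-2}$, which turns~\eqref{eqrad} into
\[
\dot x=\alpha x+y,\qquad \dot y=-(n-2-\alpha)\,y-k\,x|x|^{q-2},
\]
a system that is autonomous with $k=K_1$ for $t<0$ and with $k=K_2$ for $t>0$, switching at $t=0$ (i.e.\ $r=1$). In these coordinates a regular solution $u(r,d)$ is the trajectory that lies, as $t\to-\infty$, on the unstable manifold of the origin $O$ of the $K_1$-system (the branch tangent to the $x$-axis); fast decay solutions correspond to the stable manifold $W^s(O)$ of the $K_2$-system; slow decay solutions to trajectories converging to the nontrivial equilibrium $P=(P_x,-\alpha P_x)$ with $P_x>0$ (which exists precisely because $K_2>0$); and each non-degenerate zero of $u$ to a transverse crossing of the $y$-axis $\{x=0\}$, transversality being automatic since there $\dot x=y\neq0$ away from $O$. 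I would first record that, since $q>2^*$, the equilibrium $P$ of the $K_2$-system has negative trace and complex eigenvalues, hence is a stable focus, so that both $W^s(O)$ and the unstable manifold of the origin of the $K_2$-system spiral around $P$.

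Next I would follow the regular trajectory as the shooting parameter $d=u(0)$ increases. Because positive regular solutions of the pure absorption equation are increasing, the point $Q(d)$ where the trajectory meets $t=0$ lies in the first quadrant; and by the scaling invariance of the pure-$K_1$ equation, which in Fowler coordinates is just a time translation, $Q(d)$ runs monotonically along a single curve $\Gamma$, namely the whole $K_1$-unstable manifold, moving outward as $d$ grows and escaping to infinity as $d$ tends to a critical value $D_\infty$. For $t>0$ the trajectory then evolves under the $K_2$-focus dynamics starting from $Q(d)$. For $d$ small, $Q(d)$ is close to $O$, the trajectory is close to the $K_2$-unstable manifold, which spirals into $P$ without leaving $\{x>0\}$; this yields a positive slow decay solution, i.e.\ a G.S.\ with s.d., and accounts for the interval $0<d<D_0$.

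The thresholds $D_k$ are then defined as the parameters for which the $K_2$-trajectory issued from $Q(d)$ lands exactly on $W^s(O)$, hence produces a fast decay solution; for the intermediate values it misses $W^s(O)$ and spirals into $\pm P$, giving slow decay. The zero count comes from a winding argument: each additional half-turn of the trajectory around $P$ with radius exceeding $P_x$ adds one transverse crossing of $\{x=0\}$, i.e.\ one more non-degenerate zero of $u$, so that the zero number is a nondecreasing step function of $d$ which jumps exactly at the $D_k$, producing at $d=D_k$ a fast decay solution with $k$ zeros and, for $D_{k-1}<d<D_k$, a slow decay solution with $k$ zeros. Since the number of windings of a trajectory captured by the focus grows (logarithmically) with the distance of its starting point from $P$, this winding number diverges as $Q(d)\to\infty$, that is as $d\to D_\infty^-$; this forces infinitely many $D_k$ accumulating at $D_\infty$, and $D_\infty<\infty$ provided $\Gamma$ reaches infinity at a finite parameter.

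The main obstacle I expect is the rigorous control of the winding number as a function of $d$: one must show it is \emph{exactly} monotone, so that no zeros are created or destroyed between consecutive $D_k$ and the crossings of $W^s(O)$ are simple, and that it really diverges at a \emph{finite} $D_\infty$. I would obtain the monotonicity and the transversality of the crossings with $W^s(O)$ through an energy/Pohozaev-type functional that is strictly monotone along the flow in each region; this both rules out homoclinic tangencies to $O$ and fixes the sign of the crossings, and combines with continuous dependence and the monotone motion of $Q(d)$ along $\Gamma$. The finiteness of $D_\infty$ reduces to the finite-$r$ blow-up of regular solutions of the pure absorption problem (so that the absorption-driven solution reaches $r=1$ exactly at its blow-up radius when $d=D_\infty$), while the divergence of the winding number reduces to the spiralling rate near the focus $P$; both ingredients are furnished by the main results, so that the corollary follows by assembling them with the shooting argument in $d$.
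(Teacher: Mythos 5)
Your overall skeleton (Fowler transformation, switched planar system, shooting along the $K_1$-unstable manifold, intersections with the $K_2$-stable manifold giving fast decay, in-between parameters giving slow decay) matches the paper's proof of Theorems~\ref{main1} and~\ref{main1bis}, of which Corollary~\ref{cor1} is the special case $\eta=0$, $\delta_i=0$, $l_1=l_2=q$. But your zero-counting mechanism rests on a claim that is false and, even where true, is the wrong mechanism. You assert that for all $q>2^*$ the equilibrium $\boldsymbol{P^+}$ of the $K_2$-system is a stable \emph{focus} (complex eigenvalues). This fails beyond the Joseph--Lundgren regime: computing the Jacobian at $\boldsymbol{P^+}$ one finds trace $\alpha_l+\gamma_l$ and determinant $-(q-2)\alpha_l\gamma_l$ (for $\eta=0$), whose discriminant is positive for large $q$ when $n\ge 11$, so $\boldsymbol{P^+}$ is then a stable \emph{node} and nothing spirals into it. More importantly, zeros of $u$ are transverse crossings of $\{x=0\}$, and the windings of a trajectory captured near $\boldsymbol{P^+}$ occur at small amplitude around $x=P_x>0$: they never cross $\{x=0\}$ and create no zeros. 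The rotation that produces zeros is the \emph{large-amplitude} rotation around the \emph{origin}, which the paper establishes by showing that unbounded trajectories must cross the coordinate axes indefinitely because the superlinear term $Kx|x|^{q-2}$ dominates for $|x|$ large (proof of Lemma~\ref{picture}). It is this fact that makes $M^s$ of the $K_2$-system an unbounded double spiral winding around the origin; the number of zeros of $u(\cdot,d)$ is then read off from which annular region $\bar M^{s,j}\setminus\bar M^{s,j-1}$ between consecutive turns of that spiral the switching point $Q(d)$ lies in (Lemma~\ref{stella} and the proof of Theorem~\ref{main1}), and the divergence of the zero count as $d\to D_\infty$ comes from $\Gamma=M^u_+$ having to traverse every turn of the spiral on its way to infinity --- not from any logarithmic capture rate of a focus.

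There is a second gap in how you propose to get the exact monotone structure ($k$ zeros precisely on $(D_{k-1},D_k)$, simple crossings, no re-entry). An ``energy/Pohozaev functional strictly monotone along the flow'' does not obviously do this: the natural candidate, the energy $E$ of the $K_2$-system, is indeed decreasing along $K_2$-orbits when $q>2^*$, but along the curve $\Gamma$ (a $K_1$-object) its derivative picks up the correction term $(\alpha_l x+y)(K_2-K_1)x^{q-1}$ of opposite sign, so no fixed sign results. The paper's actual tool (Lemma~\ref{transversal}) is a pointwise algebraic computation showing that the \emph{two vector fields} (${\rm S}_{l_1}$ with $K_1<0$ and ${\rm S}_{l_2}$ with $K_2>0$) are transversal, with fixed orientation, at every point of the cone $T^+(\kappa(\eta))$ containing $M^u_+$ (here, the open first quadrant, since $\kappa(0)=0$); hence $M^u_+$ always crosses $M^s$ from the inner to the outer part of the spiral and can never re-enter $\bar M^{s,j}$, which is exactly what forces $\tilde D_k=D_{k-1}$ and yields the clean statement of Corollary~\ref{cor1}. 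In your setting this transversality is available precisely because $q_1=q_2$ and $K_1<0<K_2$, but it must be proved by this (or an equivalent) direct computation; it is not a consequence of monotonicity of an energy along either flow.
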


\begin{cor}\label{cor2}
Assume   $2_*<q<2^*$, $K_1>0$ and $K_2<0$, in equation~\eqref{eqrad}; then there is a sequence $L_k \nearrow L_{\infty}< \infty$ such that $v(r,L_0)$ is a G.S. with f.d. and $v(r,L_{k})$ is a regular-f.d. solution with exactly $k$ non degenerate zeroes.
 Moreover, the fast decay solutions $v(r,L)$ are such that $v(r,L)$
is a  S.G.S. with f.d. for any $0<L<L_0$, and
 $v(r,L)$ is a singular-f.d. solution with exactly $k$ non-degenerate zeroes
    whenever $L_{k-1}<L<L_{k}$, for any $k \ge 1$.
Consequently there is a sequence $D_k \nearrow \infty$ such that $u(r,D_k)$ is a regular-f.d. solution with
exactly $k$ non-degenerate zeroes for any $k \ge 0$.
\end{cor}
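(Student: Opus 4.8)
The plan is to read the statement off the invariant-manifold description furnished by the main theorems, specialized to the regime $2_*<q<2^*$, $K_1>0>K_2$. The conceptual backbone is the inversion (Kelvin) transform $w(s)=s^{2-n}v(1/s)$, which sends $r\to\infty$ to $s\to 0$ and $r\to 0$ to $s\to\infty$: it therefore interchanges fast decay with regularity, swaps the inner and outer coefficients $K_1\leftrightarrow K_2$, and turns \eqref{eqrad} into a supercritical-type equation carrying a weight $s^{(n-2)q-2n}$. In this way Corollary~\ref{cor2} is the inversion-image of Corollary~\ref{cor1}, the weight being exactly the Hardy-type term accommodated by the main results; a fast-decay $v(r,L)$ corresponds to a regular $w(s)$ with $w(0)=L$, so the fast-decay family $\{v(r,L)\}_{L>0}$ is the image of a regular family $\{w(s,D)\}_{D>0}$ with $D=L$, and the bounded sequence $L_k\nearrow L_\infty<\infty$ will mirror the bounded sequence $D_k\nearrow D_\infty<\infty$ of Corollary~\ref{cor1}.

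To obtain the zero count I would work directly in the Fowler plane, where \eqref{eqrad} becomes a planar system that is autonomous on each side of the switching time $t=\ln r=0$. Fast-decay solutions form the one-dimensional stable manifold $W^s$ of the saddle at the origin of the outer system ($K_2<0$), parametrized monotonically by $L$; I would first record that in the outer region $v(r,L)$ is positive and monotone, so no zero is created there, the absorption system $K_2<0$ having no nontrivial equilibrium. Following the trajectory backward to the section $\{t=0\}$ gives a point $R(L)\in W^s$, and for $t<0$ it evolves under the inner system ($K_1>0$), whose nontrivial equilibrium $P$ is a focus precisely because $2_*<q<2^*$. A solution is regular exactly when the inner backward orbit issued from $R(L)$ lands on the unstable manifold $W^u$ of the origin; these transversal matchings produce the discrete values $L_k$, at which $v(r,L_k)$ is regular-f.d. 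Between consecutive values the orbit misses $W^u$ and the solution is singular (an S.G.S.\ for $0<L<L_0$, singular-f.d.\ otherwise), while the number of transversal intersections of the orbit with $\{u=0\}$, i.e.\ the number of non-degenerate zeroes, is constant on each interval and increases by one across each $L_k$ because of the rotation forced by the focus $P$.

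For the finiteness $L_\infty<\infty$ I would identify $L_\infty$ with the parameter at which $R(L)$ meets the stable manifold of the focus $P$: there the inner backward orbit spirals into $P$ and accumulates infinitely many crossings of $\{u=0\}$, so the zero count diverges as $L\uparrow L_\infty$, while transversality of $W^s$ to this manifold yields the monotone convergence $L_k\nearrow L_\infty$. Finally, the last assertion follows from the scale invariance of the inner equation: writing $w_1=u(\cdot,1)$ for the unit regular solution of the pure inner problem, one has $u(r,D)=D\,w_1(D^{(q-2)/2}r)$, so the switching radius seen by $w_1$ is $D^{(q-2)/2}$ and grows with $D$; since each additional inner oscillation, hence each additional zero, requires a larger switching radius, the origin-values $D_k=v(0,L_k)$ of the regular-f.d. solutions satisfy $D_k\nearrow\infty$, and the identification $v(r,L_k)=u(r,D_k)$ links the two parametrizations.

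The main obstacle is the rigorous rotation count: proving that the number of transversal intersections with $\{u=0\}$ depends monotonically on $L$ and jumps by exactly one at each $L_k$, that all zeroes are non-degenerate (transversal crossings of the axis, and transversal matchings of $W^s$ with $W^u$), and, most delicately, that infinitely many rotations accumulate at a finite parameter value so that $L_\infty<\infty$. This last point is where the hypothesis $2_*<q<2^*$ is indispensable, since it is exactly what makes $P$ a focus rather than a node or a center.
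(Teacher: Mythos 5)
Your two routes are, in outline, both present in the paper: the Kelvin inversion of your first paragraph is exactly the shortcut the authors mention at the end of the proof section (``the proof can be obtained immediately also by the use of Kelvin inversion''), and your direct Fowler-plane matching is the paper's actual proof of Theorems~\ref{main2} and~\ref{main2bis}, namely the time-reversal of the proof of Theorems~\ref{main1} and~\ref{main1bis}: fast-decay solutions sit on the stable manifold $M^s$ of the outer system ($K_2<0$), regular ones on the unstable manifold $M^u$ of the inner system ($K_1>0$), and the $L_k$ correspond to their intersections. (One mislabel: the weight $s^{(n-2)q-2n}$ produced by inversion is a power weight $r^{\delta}$ as in~\eqref{types}, absorbed by the exponents $l_i$ of~\eqref{elle}; it is not a Hardy term $\eta u/r^2$. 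Note $\delta>-2$ holds precisely because $q>2_*$.) However, the two mechanisms you invoke at the points you yourself call delicate are both wrong. The oscillation is \emph{not} ``forced by the focus $P$''. For $2_*<q<2^*$ the equilibria $P^\pm$ of the inner system are merely unstable and can be nodes: as $q\to 2_*^+$ the linearization at $P^+$ has trace tending to $n-2$ and determinant tending to $0$, hence real eigenvalues. More importantly, rotation around $P^+=(P_x,-\alpha_{l_1}P_x)$, $P_x>0$, yields crossings of the line $x=P_x$, not of the $y$-axis $\{x=0\}$, so it produces no sign changes of $u$ at all. The winding that counts zeros is a \emph{global} property of $M^u$ around the origin: since the energy~\eqref{defE} is strictly increasing for $l_1<2^*$, $M^u$ cannot stay bounded, and unbounded orbits must cross the coordinate axes indefinitely because the superlinear term dominates for $|x|$ large (Lemma~\ref{picture}); thus $M^u$ is an unbounded double spiral, and backward orbits issued from points of the outer $M^s$ between consecutive intersections are trapped by its arcs, forced to cross $\{x=0\}$ exactly $k$ times, and only then converge backward to $P^\pm$ --- with finitely many zeros, whether $P^\pm$ is a node or a focus.

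Consequently your mechanism for $L_\infty<\infty$ also fails: a parameter at which the backward orbit of $R(L)$ converges to $P^+$ gives precisely one of the S.G.S.\ or singular-f.d.\ solutions with \emph{finitely} many zeros; such parameters fill the open intervals $(L_{k-1},L_k)$ and do not single out $L_\infty$, and no individual orbit accumulates infinitely many crossings of $\{x=0\}$. The true reason for finiteness is Lemma~\ref{manifold}: since $K_2<0$, solutions of the outer system blow up at finite $t$ going backward (lack of continuability), so the $L$-parametrization of the outer $M^s_+$ has \emph{bounded} range, $L^\infty<\infty$; the intersections with the inner spiral escape to infinity in arc length along $M^s_+$, whence $L_k\nearrow L_\infty\le L^\infty<\infty$. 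Dually, $D^\infty=+\infty$ because $K_1>0$, which is why $D_k\nearrow\infty$ (your scaling argument for this last claim is correct). Finally, the exact count ``$k$ zeros for every $L\in(L_{k-1},L_k)$'' requires that each spiral arc meet $M^s_+$ only once; this is not generic transversality of manifolds but the flow-transversality computation of Lemma~\ref{transversal}, available here exactly because $l_1=l_2=q\ge 2_*$ --- this is where the hypothesis enters, not through the local type of $P^\pm$.
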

\begin{remark}
Besides the fact that the nonlinearity in~\eqref{laplace} has a very special form, we believe that
it can be regarded as the prototype for a much wider class of equations, including smooth nonlinearities:
this will be the object of a forecoming paper. However the presence of negative $k(r)$ causes severe
technical difficulties, due to the lack of continuability of solutions $u(r)$ (in general they might be defined just in an annulus).
\end{remark}

  In this simple model, i.e. for~\eqref{laplace} and for~\eqref{hardy} below, we are able to solve completely all the
main questions:
 Corollaries \ref{cor1} and \ref{cor2}, and the more general results Theorems \ref{main1bis} and \ref{main2bis}, give  the exact structure for all the radial solutions of~\eqref{eqrad},
 classifying them according to their sign properties, and also precise asymptotic estimates of their asymptotic behavior, see Definition \ref{defRFS}, Remarks \ref{singandslow}, \ref{asymp.criticalbis}, \ref{singandslowbis}.
  Further we easily obtain precise results concerning  the relation between the values $K_i$
  (which represents the ratio between
  the velocity of the diffusion and the strength of the reaction), and the values and the positions  of the maxima
  of positive solutions, see Section~\ref{furtherrem}: this information might be of use in applications.

It is worthwhile to quote that in literature there are many results on the structure of radial solutions for Laplace equations
with indefinite weights $k$, even for more general potential, see e.g.~\cite{Bae2,BJ2,CFG}. However, these papers
are concerned with phenomena which are found when $k$ is a positive function,
and which persist even if
$k$ becomes      negative in some regions.
The structure results we find can just take place if we have a change in the
sign of $k$: if $q$ is either smaller or larger than $2^*$ there are no G.S. with fast decay, neither if $k(r) \equiv K>0$, nor if
  $k(r) \equiv K<0$. In fact, the structure of the solutions of~\eqref{eqrad} described in Corollaries~\ref{cor1} and~\ref{cor2}, reminds
  of the situation in which $q=2^*$ and we have a positive $k$ which behaves like a positive power for $r$ small and
  a negative power for $r$ large see e.g.~\cite{DF,Y1996}.
  In fact structure results which are typical of non-linearities with  sign-changing weights have been found in \cite{BGH}, but for
  bounded domains and just in the subcritical case (using a variational approach).  Further  in \cite{DMM,MaMa,Serena1} and in references therein
   the reader can find several nice and sharp structure results for sign-changing non-linearities, even for more general operators   ($p$-Laplace, relativistic and mean curvature),
   in the framework of oscillation (and non-oscillation) theory,
  but for exterior domains, i.e. for solutions defined, say for $r>1$.

\medbreak

In fact, our analysis is directly developed for a more general class of equations, including the singular term
$\eta u/r^2$, which usually takes the name of Hardy potential, and  a slightly larger class of nonlinearities. More precisely we consider
the following problem
\begin{equation}\label{hardy}\tag{H}
\Delta u+ \frac{\eta}{r^2}u + f(u,r)=0
\end{equation}
 or more precisely its radial counterpart
\begin{equation}\label{rad.hardy}\tag{Hr}
u''+ \frac{n-1}{r} u'+\frac{\eta}{r^2}u + f(u,r)=0\,,
\end{equation}
where, in the whole paper  we assume $\eta<\frac{(n-2)^2}{4}$, and we set
 \begin{equation}\label{types}
f(u,r)=
 \begin{cases}
K_1 r^{\delta_1} u |u|^{q_1-2} & r \le 1 \\
K_2 r^{\delta_2} u |u|^{q_2-2} & r>1 \,.
 \end{cases}
 \end{equation}
 We recall that $\frac{(n-2)^2}{4}$ is the best constant for Hardy inequality, and no positive solutions
 for~\eqref{rad.hardy} may exist close to $r=0$ for $\eta>\frac{(n-2)^2}{4}$, see e.g.~\cite{Cirstea}:
  this fact is reflected in a change of the stability properties
 of the origin of the dynamical system of Fowler type we are going to introduce in Section~\ref{FowlerT}.
 It is well known that the changes in the structure of positive solutions depend on the interaction
 between the exponents $q_i$, and
 the spatial inhomogeneities $r^{\delta_i}$ which determine a shift in the critical values for the exponents.
 For this purpose, following e.g.~\cite{F1}, we introduce the parameters
  \begin{equation}\label{elle}
 l_i= 2 \frac{q_i+\delta_i}{2+\delta_i} \, \qquad \textrm{for $i=1,2$} \,.
 \end{equation}
 Note that $l_i$ gives back $q_i$ if $\delta_i=0$, and that
 \eqref{hardy} with
 $f(u,r)= K r^{\delta_1} u|u|^{q_1-2}$ is subcritical, critical and supercritical iff
 $l_1$ is smaller, equal or larger than $2^*$, cf.~\cite{F1}.

Equation~\eqref{hardy} has been subject to deep investigation for different type of~$f$, see e.g. \cite{Bae1,FMT1,FMT,FGazz,Terr96}.
  The introduction of the
 singular Hardy terms affects deeply the asymptotic behaviour of the solutions, and the values of some critical exponents.
Therefore we need to relax the notion   of \emph{regular}   and \emph{fast decay} solutions: see Remark~\ref{differenze1} below. Notice in particular that no solutions bounded for $r>0$ can exist when $\eta>0$. However, using  generalized notion of regular and fast decay solutions (see Definition~\ref{defRFS})
we are able to prove Theorems~\ref{main1bis} and~\ref{main2bis} which extends Corollaries~\ref{cor1} and~\ref{cor2},
and the more general, but less precise Theorems~\ref{main1} and~\ref{main2}.

 We emphasize that we have assumed that $k(r)$ in~\eqref{eqrad} and $f(u,r)$ in~\eqref{types}
 changes discontinuously sign at $r=1$, just for simplicity: all the discussion
can be repeated in the case where $k(r)$ changes sign at $r=r_0>0$. In fact,  changing the spatial coordinate by setting $r=r_0 \, \tilde r$, we can pass  to an equation like~\eqref{laplace}
with $\tilde K_i= r_0^2 K_i$ and $\tilde r$ as independent variable.
A similar reasoning holds also for \eqref{rad.hardy}.

The proofs are based on the introduction of Fowler transformation~\cite{Fow}. This way the radial problems are reduced to
two different $2$-dimensional autonomous systems: one corresponding to $r \le 1$, and the other
to $r >1$. These problems are studied via invariant manifold theory, following
the way paved by \cite{JPY1,JPY2,JPY3,JK},
 and the structure results
for the original equations are found by a simple superposition of the two phase portraits.

We expect to find results analogous to Corollaries~\ref{cor1} and~\ref{cor2} also in the $p$-Laplace context, i.e. for radial solutions of
\begin{equation}\label{pLaplace}
\Delta_p u+   f(u,r)=0 \,,
\end{equation}
where $f$ is as in~\eqref{types},  $\Delta_p u=\textrm{div} (\nabla u |\nabla u|^{p-2})$ and $n>p$,
making use of the generalized Fowler transformation found in
\cite{Bv}. In order to avoid cumbersome technicalities we leave open this part suggesting an approach similar to the one adopted by the first author in~\cite{F6} (see also~\cite{Bv,F1,Fduegen}).

The paper is divided as follows: in Section~\ref{FowlerT} we introduce Fowler transformation, underlining the property of the corresponding dynamical system in presence of Hardy potentials. Then, in Sections~\ref{mag0} and~\ref{min0} we present the phase portrait analysis in the case of equation~\eqref{laplace} with $k(|x|)=K>0$ and $k(|x|)=K<0$. Section~\ref{mainthm} is devoted to the statement of the main results and their proofs. Finally, a slight generalization to other kind of nonlinearities will be presented in Section~\ref{slight}.
In Section~\ref{furtherrem}, the dependence of the maxima of the solutions $u(r,D_0)$ described in Theorem \ref{main1},
on the paprameters $K_i$ and $r_0$ of the problem is explored.

\section{Preliminaries: the autonomous case} \label{due}

\subsection{Fowler transformation and invariant manifolds}\label{FowlerT}
In this section we introduce a Fowler-type transformation (cf.~\cite{Fow}), which permits us to pass from equation~\eqref{rad.hardy} to a planar system.
In the whole section we assume
\begin{equation}\label{auto}
f(u,r)=K r^{\delta} u|u|^{q-2} \, , \qquad \delta>-2, \; q>2 , \quad l=  2 \frac{q+\delta}{2+\delta}>2 \,.
\end{equation}
We set $\alpha_l =\frac{2}{l-2}$ and $\gamma_l = \alpha_l+2-n$, and
\begin{equation}\label{transf}
\begin{cases}
x_l(t)=u(r)r^{\alpha_l} \\  y_l(t) =u'(r)r^{\alpha_l+1}
\end{cases}
\qquad \text{where } r=\eu^t \,,
\end{equation}
so that we pass from~(\ref{rad.hardy}) to the following autonomous system
\begin{equation}\label{sist}\tag{${\rm S}_l$} 
\left( \begin{array}{c}
\dot{x}_l \\
\dot{y}_l  \end{array}\right) = \left( \begin{array}{cc} \alpha_l &
1
\\ -\eta & \gamma_l
\end{array} \right)
\left( \begin{array}{c} x_l \\ y_l  \end{array}\right) +\left(
\begin{array}{c} 0 \\-
K x_l|x_l|^{q-2}\end{array}\right).
\end{equation}
We will draw the phase portraits of~\eqref{sist}, as $l$ varies and $K$ changes sign, see Figures~\ref{posportrait} and~\ref{negportrait}.
Notice that many of the results contained in these sections are well known in literature, but we collect them here for completeness.
Let us set
$$\kappa(\eta):= \frac{(n-2)- \sqrt{(n-2)^2-4\eta}}{2}\,. $$

If we linearize system~\eqref{sist} at the origin, we find two real distinct eigenvalues thanks to the assumption $\eta<\frac{(n-2)^2}{4}$,
i.e., $\lambda(l)=\gamma_l+\kappa(\eta)$ and
$\Lambda(l)=\alpha_l-\kappa(\eta)$, $\lambda(l)< \Lambda(l)$.
 The origin is a saddle if and only if $-\alpha_l \gamma_l  > \eta$ which corresponds to   $2_*(\eta)<l<{\rm I}(\eta)$,
where
$$2_*(\eta):=2 \frac{n+\sqrt{(n-2)^2-4 \eta}}{n-2+\sqrt{(n-2)^2-4\eta}}  $$
(which gives back $2_*$ if $\eta=0$), and
$$
{\rm I}(\eta):= \begin{cases}
+\infty & \text{if } \eta\leq 0\\
2 \frac{n-\sqrt{(n-2)^2-4 \eta}}{n-2-\sqrt{(n-2)^2-4\eta}} & \text{if } 0<\eta<\frac{(n-2)^2}{4}\,.
\end{cases}
$$

\noindent
When $2_*(\eta)<l<{\rm I}(\eta)$, we find $\lambda(l)<0<\Lambda(l)$, where  $v_{\lambda(l)}=(1,-n+2+\kappa(\eta))$, $v_{\Lambda(l)}=(1,-\kappa(\eta))$
are the corresponding eigenvectors.
Hence  the origin is a saddle which admits a
$1$-dimensional unstable manifold $M^u$ which is tangent at the origin to the line $T^u$:
 $y=-\kappa(\eta) x$ and a $1$-dimensional stable manifold $M^s$
which is tangent at the origin to the line $T^s$: $y=-[n-2-\kappa(\eta)] x$.
Let $\phi(t,\Q)$ be the solution of~\eqref{sist} such that $\phi(0,\Q)=\Q$,  then we have the following basic correspondence
between~\eqref{sist} and~\eqref{rad.hardy}.
\begin{lemma}\label{manifold}
Assume $2_*(\eta)<l< I(\eta)$, and consider the trajectory $\phi(t,\Q)$ of~\eqref{sist},
and the corresponding solution $u(r)$ of~\eqref{rad.hardy}.

Then there are $D^{\infty} \in (0,\infty]$, $L^{\infty} \in (0,\infty]$
 such that
 \begin{eqnarray}
      \Q \in M^u & \Longleftrightarrow & \liro u(r)r^{\kappa(\eta)}=d \in (-D^{\infty},D^{\infty}) \label{corr1} \,,\\
       \Q \in M^s & \Longleftrightarrow & \lir u(r)r^{n-2-\kappa(\eta)}=L \in(-L^{\infty},L^{\infty})\label{corr2} \,.
 \end{eqnarray}
So we denote by $u(r,d)$ and $v(r,L)$ respectively the solutions in~\eqref{corr1}, and~\eqref{corr2}.
Then we have 
\begin{equation}\label{scaling}
   u(r,d)=u(rd^{-1/(\alpha_l-\kappa(\eta))},1)d \, , \quad v(r,L)=v(rL^{1/[n-2-\kappa(\eta)-\alpha_l]},1)L\,.
\end{equation}
Further $D^{\infty}=L^{\infty}=+\infty$   if $K>0$ and they are both bounded if $K<0$.
\end{lemma}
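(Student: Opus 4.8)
The plan is to reduce the whole statement to a single forward/backward global-existence dichotomy for the orbit tracing out $M^u$ (resp. $M^s$), and then dispatch the two signs of $K$ by an energy estimate and a superlinear blow-up comparison, respectively.

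First I would exploit the scaling relation \eqref{scaling}. By the oddness of \eqref{sist} under $(x_l,y_l)\mapsto(-x_l,-y_l)$ it suffices to study the branch of $M^u$ lying in $\{x_l>0\}$; this is a single orbit $\psi(s):=\phi(s,\Q_0)$, $s\in(-\infty,s_{\max})$, with $\psi(s)\to 0$ as $s\to-\infty$. Since the $r$-scaling of \eqref{rad.hardy} is exactly the time-translation $t\mapsto t+\ln\mu$ of \eqref{sist}, a direct computation from \eqref{scaling} shows that the value $d$ attached to $\psi(s)$ through \eqref{corr1} satisfies $d(\psi(s))=c\,\eu^{\Lambda(l)s}$ for a constant $c>0$. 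As $\Lambda(l)>0$, the map $s\mapsto d(\psi(s))$ is increasing with range $(0,D^\infty)$, where
\begin{equation}
D^\infty=\lim_{s\to s_{\max}^-}c\,\eu^{\Lambda(l)s}.
\end{equation}
Hence $D^\infty=+\infty$ exactly when $s_{\max}=+\infty$, i.e. when this branch of $M^u$ is defined for all forward times, and $D^\infty<\infty$ exactly when the orbit blows up in finite time. The same reasoning, run backward along $M^s$ (equivalently through the inversion $r\mapsto 1/r$), reduces $L^\infty=+\infty$ to backward global existence of the relevant branch of $M^s$. So everything reduces to global existence for \eqref{sist}.

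For $K>0$ I would prove global existence via the energy
\begin{equation}
E(x_l,y_l)=\tfrac12 y_l^2+\tfrac{K}{q}|x_l|^q\ge 0,
\end{equation}
whose derivative along \eqref{sist} is $\dot E=\gamma_l y_l^2-\eta\,x_ly_l+\alpha_l K|x_l|^q$. Using $y_l^2\le 2E$, $|x_l|^q\le (q/K)E$ and $x_l^2\le (qE/K)^{2/q}$ (note $2/q<1$ since $q>2$), I obtain $|\dot E|\le C(1+E)$ for a constant $C=C(l,\eta,K,n)$. Gronwall's inequality then bounds $E$, hence $(x_l,y_l)$, on every finite time interval in both time directions, so no trajectory escapes to infinity in finite time; in particular $s_{\max}=+\infty$, and by the reduction $D^\infty=L^\infty=+\infty$. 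For $K<0$ the nonlinearity becomes $|K|x_l|x_l|^{q-2}$, which is destabilizing and forces finite-time blow-up. I would confine the positive branch of $M^u$ in a sector $S=\{x_l\ge x_0,\ y_l\ge c\,x_l\}$ with $c>0$ and $x_0$ large: on $\{y_l=cx_l\}$ one has $\dot y_l-c\dot x_l=(-\eta-c\alpha_l+c\gamma_l-c^2)x_l+|K|x_l^{q-1}$, dominated by $|K|x_l^{q-1}>0$ for $x_0$ large, while $\dot x_l=\alpha_l x_l+y_l>0$ on $\{x_l=x_0\}$, so $S$ is positively invariant. Inside $S$, $\ddot x_l=\alpha_l\dot x_l+\dot y_l\ge\tfrac12|K|x_l^{q-1}$, and multiplying by $\dot x_l>0$ and integrating gives $\dot x_l\ge c'x_l^{q/2}$; since $q/2>1$ this yields $x_l\to+\infty$ in finite time. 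Thus $s_{\max}<\infty$ and $D^\infty<\infty$, and the symmetric backward argument on $M^s$ gives $L^\infty<\infty$.

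The hard part will be the confinement step for $K<0$: I must show that the branch of $M^u$, which leaves the origin tangent to $v_{\Lambda(l)}=(1,-\kappa(\eta))$ and may therefore start with $y_l<0$ when $\eta>0$, actually enters $S$. The mechanism is that the positive superlinear term $|K|x_l^{q-1}$ bends the trajectory upward faster than the linear prediction $y_l=-\kappa(\eta)x_l$; this is transparent from the identity $-\eta-\gamma_l\kappa(\eta)=-\kappa(\eta)\Lambda(l)$, which says the linearized flow stays on the eigendirection so that only the nonlinearity moves the orbit off it. Making this rigorous, together with ruling out that $x_l$ turns back before reaching $S$, is the delicate point and is precisely the phase-plane estimate developed in \cite{JPY1,JPY2,JPY3,JK}. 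The remaining verifications (the $\dot E$ computation, the Gronwall bound, and the transfer of the $M^s$ statements by time reversal) are routine.
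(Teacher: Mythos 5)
Your overall strategy coincides with the paper's: the identity $d(\psi(s))=c\,\eu^{\Lambda(l)s}$ is precisely the translation-invariance computation by which the paper derives \eqref{scaling}, and the identification of the finiteness of $D^{\infty},L^{\infty}$ with finite-time escape of the orbit is the paper's appeal to (non-)continuability. Where you go further is in actually proving the continuability statements: your $K>0$ argument is correct and more self-contained than the paper's, which simply cites \cite{CDZ} --- the energy $E=\frac{1}{2}y_l^2+\frac{K}{q}|x_l|^q$ is nonnegative, controls $(x_l,y_l)$, and satisfies $|\dot E|\le C(1+E)$ because $2/q<1$, so Gronwall rules out finite-time escape in both time directions.

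The $K<0$ half, however, contains a genuine gap. From \eqref{sist} one computes $\ddot x_l=(\alpha_l^2-\eta)x_l+(\alpha_l+\gamma_l)y_l+|K|x_l^{q-1}$ on $x_l>0$, and $\alpha_l+\gamma_l<0$ exactly when $l>2^*$ --- a case squarely inside the hypothesis $2_*(\eta)<l<I(\eta)$ (recall $I(\eta)=+\infty$ for $\eta\le 0$), and the one needed for Theorem~\ref{main1} when $l_1>2^*$. Your sector $S=\{x_l\ge x_0,\ y_l\ge c x_l\}$ is unbounded in the $y_l$-direction, so at points of $S$ with $y_l\gg x_l^{q-1}$ the term $(\alpha_l+\gamma_l)y_l$ dominates and $\ddot x_l<0$: the claimed pointwise bound $\ddot x_l\ge\frac{1}{2}|K|x_l^{q-1}$ is false on $S$ whenever $l>2^*$. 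Positive invariance of $S$ does not rescue the argument, because it does not prevent the orbit from drifting into the region $y_l\gg x_l^{q-1}$. What is missing is an upper confinement, e.g.\ showing that $\{x_l\ge x_1,\ cx_l\le y_l\le \ep x_l^{q-1}\}$ is also positively invariant (on the upper boundary the term $-\ep^2(q-1)x_l^{2q-3}$ dominates and pushes the flow inward, since $2q-3>q-1$); with that barrier your chain $\ddot x_l\gtrsim x_l^{q-1}\Rightarrow\dot x_l\gtrsim x_l^{q/2}\Rightarrow$ finite-time blow-up goes through. For $l\le 2^*$ your argument is fine as written, since then both linear terms are nonnegative on $S$ (note $\alpha_l\ge (n-2)/2$ gives $\alpha_l^2\ge\eta$). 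Finally, the entry-into-$S$ step that you defer to \cite{JPY1,JPY2,JPY3,JK} is also needed for completeness, though on that point you are no worse off than the paper itself, which declares the $K<0$ non-continuability ``well known'' and provable by ``Gronwall-type arguments.''
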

\begin{proof}
The existence of the unstable manifold follows from standard facts in invariant manifold theory, cf. \cite[§ 13]{CodLev}; further if $\Q \in M^u$ then  $\phi(t,\Q) \eu^{-\Lambda(l) t} \to (d,-\kappa(\eta)d)$ as $t\to -\infty$, for a certain $d \in \RR$. Thus $u(r)r^{\kappa(\eta)} \to d$ as $r \to 0$.
Since~\eqref{sist} is autonomous, then $M^u$ is the graph of a trajectory.
So if $\phi(\tau,\Q)=\R$ and $v(r)$ is the solution corresponding to
$\phi(t,\R)$ then $\phi(t+\tau,\Q)=\phi(t,\R)$,
thus giving $v(\eu^{t+\tau},d)\eu^{\alpha_l \tau}=u(\eu^{t},1)$.
 Therefore
$$
\begin{aligned}
d &=\lim_{t \to -\infty} u(\eu^{t+\tau},d)\eu^{\kappa(\eta)(t+\tau)} \\ &=
\lim_{t \to -\infty}u(\eu^t,1)  \eu^{\kappa(\eta)t}\eu^{[\kappa(\eta)-\alpha_l]\tau} = \eu^{[\kappa(\eta)-\alpha_l]\tau} ,
\end{aligned}
$$
and the first equality in~\eqref{scaling} follows. Analogously we find
$$
\begin{aligned}
L=\lim_{t \to +\infty}
v(\eu^{t+\tau},L)\eu^{[n-2-\kappa(\eta)](t+\tau)} &=
\lim_{t \to +\infty} v(\eu^t,1)  \eu^{[n-2-\kappa(\eta)]t}\eu^{[n-2-\kappa(\eta)-\alpha_l]\tau} \\
&= \eu^{[n-2-\kappa(\eta)-\alpha_l]\tau} ,
\end{aligned}
$$
and the second equality  in~\eqref{scaling} follows, too.

It is well known that $D^{\infty}<\infty$ and $L^{\infty}<\infty$ if $K<0$ due to the lack of continuability of  solutions
of~\eqref{eqrad}, and~\eqref{rad.hardy}:
it can be shown by some Gronwall-type arguments. The fact that $D^{\infty}=+\infty=L^{\infty}$ if $K>0$
is again well established, see e.g.~\cite{CDZ}.
\end{proof}

Remark~\ref{manifold} provides a  smooth parametrization
 $\Psi^u:(-D^{\infty},D^{\infty}) \to M^u$ of $M^u$ and
 $\Psi^s:(-L^{\infty},L^{\infty}) \to M^s$ of $M^s$,
such that  $\Psi^u(0)=(0,0)=\Psi^s(0)$.

\begin{remark}\label{differenze1}
Notice that $u(r,d)$ is a regular solution and $v(r,L)$ is a fast decay solution
whenever $\eta=0$.
If  $0<\eta<\frac{(n-2)^2}{4}$ and $d>0$ then  $\kappa(\eta)>0$, thus $u(r,d)$
 is in fact singular, i.e. $\liro u(r)=+\infty$, and accordingly $u'(r)$ is negative and $\liro u'(r)=-\infty$
 as $r \to 0$. However if $\eta<0$ and $d>0$ then $\kappa(\eta)<0$, so that   $u(r,d) \to 0$ like a power
 as $r \to 0$, and it is monotone increasing for $r$ small.
\end{remark}

As a consequence we need to introduce the next terminology. We can recognize, if $\eta=0$, the usual notion of regular/singular and fast/slow decay solutions.

\begin{definition}\label{defRFS}
\begin{itemize}
\item A \Rsol-solution $u(r,d)$ satisfies $\liro u(r,d) r^{\kappa(\eta)} = d\in\RR$, while a \Ssol-solution $u$ satisfies $\liro u(r) r^{\kappa(\eta)} = \pm\infty$.
\item A \fdsol-solution $v(r,L)$ satisfies $\lir v(r,L)r^{n-2-\kappa(\eta)} = L\in\RR$, while a \sdsol-solution $u$ satisfies $\lir u(r)r^{n-2-\kappa(\eta)} = \pm \infty$.
\item a \sol Rkf $u(r,d)=v(r.L)$ is both a \Rsol-solution and a \fdsol-solution having $k$ nondegenerate zeros. Similarly we will treat \sol Rks $u(r,d)$, \sol Skf $v(r,L)$.
When we do not indicate the value $k$, e.g.  \sol S{}f,
we mean any solution with these asymptotic properties disregarding its number of zeroes.
\end{itemize}
\end{definition}

\medbreak

When $2<l < 2_*(\eta)$ the origin is an unstable node    for~\eqref{sist}, i.e., $\Lambda(l)>\la(l) >0$.
In this case we denote by $M^u$ the $1$-dimensional strongly unstable manifold, see \cite[§ 13]{CodLev},
which can be characterized as follows:

\begin{equation}\label{charac}
M^u:= \{\Q \mid \lito \|\phi(t,\Q)\| \eu^{- \Lambda(l) t} =c \in \RR \}\,.  
\end{equation}

We emphasize that we have the same characterization for $M^u$ when $2_*(\eta)<l <I(\eta)$.
In fact the part of Remark~\ref{manifold} concerning $M^u$ continues to hold in this case too.
When $l=2_*(\eta)$ then  $\Lambda(l)>\lambda(l) =0$, so we have a central manifold: so $M^u$ is a classical unstable manifold and
satisfies Remark~\ref{manifold} and~\eqref{charac}.

Analogously, when $l > I(\eta)$ then $ \lambda(l)<\Lambda(l)< 0$, and when $l = I(\eta)$  $ \lambda(l)<\Lambda(l)=0$, so
the origin is respectively a stable node for~\eqref{sist} or it has a central and a stable direction.
So we denote by $M^s$ the $1$-dimensional stable ($l=I(\eta)$) or strongly  stable ($l>I(\eta)$) manifold, see \cite[§ 13]{CodLev},
\begin{equation}\label{characbis}
M^s:= \{\Q \mid \lit  \|\phi(t,\Q)\| \eu^{- \lambda(l) t} =c \in \RR\}\,. 
\end{equation}
We emphasize that this last possibility does not take place when $\eta \le 0$, since
 ${\rm I}(\eta)=+\infty$ in this case. If $\eta>0$, the part concerning $M^s$ in Lemma~\ref{manifold} holds too.

Summing up we have the following.

\begin{lemma}\label{manifold2}
Assume that $2<l \le 2_*(\eta)$, and  $f$  is as in~\eqref{auto}.
Then there is a strongly unstable manifold $M^u$ (and no stable manifold), and Lemma~\ref{manifold} and Remark~\ref{differenze1}
continue to hold.

Analogously assume that $l \ge I(\eta)$, and  $f$ is as in~\eqref{auto}.
Then there is a strongly  stable manifold $M^s$ (and no unstable manifold), and Lemma~\ref{manifold} and Remark~\ref{differenze1}
continue to hold.
\end{lemma}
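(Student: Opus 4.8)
The plan is to observe that the entire $M^u$ part of the proof of Lemma~\ref{manifold} rests on a single analytic fact: that along any trajectory lying on $M^u$ one has $\phi(t,\Q)\,\eu^{-\Lambda(l)t}\to(d,-\kappa(\eta)d)$ as $t\to-\infty$ for some $d\in\RR$. Once this is available, the translation into \eqref{corr1}, the scaling identity \eqref{scaling}, the dichotomy on $D^\infty$, and Remark~\ref{differenze1} all follow verbatim, since none of those steps used the sign of $\lambda(l)$ nor the hyperbolicity of the saddle. So the whole matter reduces to producing, in the node regime $2<l\le 2_*(\eta)$, a one-dimensional invariant manifold tangent at the origin to the fast eigenvector $v_{\Lambda(l)}=(1,-\kappa(\eta))$ along which this fast-rate asymptotics holds.

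First I would invoke the strong unstable manifold theorem, in the spirit of \cite[\S 13]{CodLev}, for~\eqref{sist}: the nonlinear term $-K x_l|x_l|^{q-2}$ is $o(\|(x_l,y_l)\|)$ and $C^1$ near the origin (as $q>2$), and for $2<l<2_*(\eta)$ the spectrum splits as $0<\lambda(l)<\Lambda(l)$, so there is a one-dimensional strongly unstable manifold $M^u$, tangent to $v_{\Lambda(l)}$ and characterized exactly by~\eqref{charac}, namely the set of trajectories that leave the origin at the fast rate $\eu^{\Lambda(l)t}$ instead of the generic slow rate $\eu^{\lambda(l)t}$. On $M^u$ we therefore obtain the required limit $\phi(t,\Q)\eu^{-\Lambda(l)t}\to(d,-\kappa(\eta)d)$. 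The threshold $l=2_*(\eta)$, where $\lambda(l)=0<\Lambda(l)$, is in fact easier: there $M^u$ is the classical one-dimensional unstable manifold attached to $\Lambda(l)$, the vanishing eigenvalue yielding only a center direction that does not interfere, and~\eqref{charac} persists.

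With this in hand I would simply rerun the argument of Lemma~\ref{manifold}. Through~\eqref{transf}, $x_l(t)=u(\eu^t)\eu^{\alpha_l t}\sim d\,\eu^{\Lambda(l)t}=d\,\eu^{(\alpha_l-\kappa(\eta))t}$ gives $u(r)r^{\kappa(\eta)}\to d$, that is~\eqref{corr1}. Since~\eqref{sist} is autonomous, $M^u$ is still the graph of a trajectory, so the time-translation computation of Lemma~\ref{manifold}, which uses only autonomy together with~\eqref{corr1}, reproduces the first identity of~\eqref{scaling} unchanged. The dichotomy $D^\infty<\infty$ for $K<0$ versus $D^\infty=+\infty$ for $K>0$ concerns the continuability of solutions of~\eqref{rad.hardy} near $r=0$, not the linear type of the origin, so the same Gronwall-type reasoning (resp.\ the argument of \cite{CDZ}) applies. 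Remark~\ref{differenze1} is then a direct reading of~\eqref{corr1} according to the sign of $\kappa(\eta)$, so it too survives. The companion statement for $M^s$ in the regime $l\ge I(\eta)$, which by ${\rm I}(\eta)=+\infty$ for $\eta\le0$ only arises when $0<\eta<\frac{(n-2)^2}{4}$, is entirely symmetric: one produces the one-dimensional strongly stable manifold tangent to $v_{\lambda(l)}=(1,-n+2+\kappa(\eta))$ via~\eqref{characbis}, obtains $\phi(t,\Q)\eu^{-\lambda(l)t}\to(L,-[n-2-\kappa(\eta)]L)$ as $t\to+\infty$, and pulls this back to~\eqref{corr2} and the second identity of~\eqref{scaling}.

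The main obstacle is precisely the node regime, i.e.\ isolating $M^u$ as the \emph{fast} invariant curve. Unlike the saddle case of Lemma~\ref{manifold}, where the unstable manifold is hyperbolic and automatically picked out, here every nearby trajectory converges to the origin as $t\to-\infty$, the generic ones tangent to the slow eigenvector $v_{\lambda(l)}$; the role of~\eqref{charac} is to select exactly those with the strictly faster rate $\Lambda(l)$, and establishing existence, tangency to $v_{\Lambda(l)}$, and the sharp rate genuinely needs the strong (not merely plain) unstable manifold theorem. Two further points deserve care: the nonlinearity is only $C^1$ when $2<q<3$, so a version valid under $C^1$ (or Lipschitz-plus-spectral-gap) hypotheses must be used; and at the thresholds $l=2_*(\eta)$ and $l=I(\eta)$ an eigenvalue vanishes, so one must check that the accompanying center direction does not corrupt the characterization, which it does not, since the surviving nonzero eigenvalue still governs $M^u$ (resp.\ $M^s$).
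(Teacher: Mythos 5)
Your proposal is correct and takes essentially the same route as the paper: the paper also invokes the strong unstable/stable manifold theory of \cite[\S~13]{CodLev} in the node regimes, characterizes $M^u$ and $M^s$ via~\eqref{charac} and~\eqref{characbis} (treating $l=2_*(\eta)$ and $l=I(\eta)$ as the cases with an extra center direction), and then observes that the arguments of Lemma~\ref{manifold} and Remark~\ref{differenze1} carry over unchanged because they only use the fast-rate asymptotics along the invariant curve. Your added cautions about the nonlinearity being merely $C^1$ for $2<q<3$ and about the center direction at the thresholds are sensible refinements of the same argument rather than a departure from it.
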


\begin{remark}\label{strong}
 We think it is worthwhile to notice that the behavior of \Rsol-so\-lu\-tions and \fdsol-solutions is the one ruled by the linear operator
 $\Delta u+ \frac{\eta}{r^2} u$, while the behavior of \Ssol-solutions and \sdsol-solutions depends mainly on the nonlinear term $u|u|^{q-2}$, at least
 if the nonlinearity is "strong enough". See Lemma~\ref{singandslow} below.
 \end{remark}

In the whole paper we denote by $M^{s,+}$, respectively by $M^{s,-}$, the branch of $M^s$ departing from the origin towards $x>0$, resp. $x<0$.
Similarly we denote by $M^{u,+}$, resp. $M^{u,-}$, the branch of $M^u$ departing from the origin towards $x>0$, resp. $x<0$.

Moreover, we denote by $T(m):= \{(x,y) \mid y=-mx \, , \; x >0 \}$ and by
 $T^{\pm}(m):= \{(x,y) \mid \pm(mx +y)>0 \, , \; x >0 \}$,
and recall that $M^u_+$ and $M^s_+$ are tangent respectively to $T^u=T(\kappa(\eta))$ and to $T^s=T(n-2-\kappa(\eta))$.
Observe further that the subset of the isocline $\dot{x}=0$ contained in $x>0$ lies
in $T^+(\kappa(\eta))$ iff $l>I(\eta)$, it is in the stripe between $T^u$ and $T^s$ iff
$l \in (2_*(\eta),I(\eta))$, it is in $T^-(n-2-\kappa(\eta))$ iff $2<l<2_*(\eta)$, it is tangent to $T^u$ and $T^s$ respectively iff
$l=I(\eta)$ and $l=2_*(\eta)$.

In the next subsections we turn to consider \Ssol-solutions and \sdsol-solutions: for this purpose we need to distinguish among the cases $K>0$ and $K<0$.

\subsection{Phase portraits of~\eqref{sist}  for $K>0$}\label{mag0}
When $K>0$ and $2_*(\eta)<l<I(\eta)$,~\eqref{sist} admits  two further nontrivial critical points $\boldsymbol{P^+} =(P_x,P_y)$ and $\boldsymbol{P^-} =(-P_x,-P_y)$ such that $P_x=[(-\alpha_l\gamma_l-\eta)/K]^{\frac{1}{q-2}}>0$ and $P_y=-\alpha_l P_x$. They are
 unstable for $2_*(\eta)<l<2^*$, centers if $l=2^*$, and stable for $2^*<l< {\rm I}(\eta)$.
 These critical points correspond  to \sol S0s $V(r)=P_x r^{-\alpha_l}$, and $-V(r)$.
  By symmetry, in what follows, we will focus our attention only on $\bs {P^+}$.

\medbreak

 From standard phase
plane arguments we get the following.

\begin{remark}\label{singandslow}
Assume $f$ is as in~\eqref{auto} and $K>0$.
\begin{enumerate}
  \item  If $2_*(\eta)<l<I(\eta)$,   there is at least a positive  to \sol S0s, $V(r)=P_x r^{-\alpha_l}$.

Further if $2_*(\eta)<l<2^*$ then the critical point $\bs {P^+}$ is unstable, so there is a two parameters  family
of trajectories $\phi(t,\Q)$ such that $\phi(t,\Q) \to \bs{P^+}$ as $t \to -\infty$. Therefore there is a two parameters  family
of  \Ssol-solutions of~\eqref{rad.hardy}, say $v(r)$, such that $v(r)r^{\alpha_l} \to P_x$ as $r \to 0$.

Dually if $2^*<l<I(\eta)$ then the critical point $\bs {P^+}$ is  stable, so there is a two parameters  family
of trajectories $\phi(t,\Q)$ such that $\phi(t,\Q) \to \bs {P^+}$ as $t \to +\infty$, and correspondingly a two parameters  family
of \sdsol-solutions of~\eqref{rad.hardy}, say $v(r)$, such that $v(r)r^{\alpha_l} \to  P_x$ as $r \to +\infty$.

  \item If $2<l<2_*(\eta)$, then the origin is the unique critical point and it is repulsive. However if $\phi(t,\Q) \to (0,0)$ as $t \to -\infty$
  but $\Q \not\in M^u$ then $\phi(t,\Q) \eu^{- \lambda(l) t} \to c(\Q) v_{\lambda(l)}$ as $t \to -\infty$, see~\cite{CodLev}. 
  So there is a two parameters family of \Ssol-solutions
  $v(r)$ of~\eqref{rad.hardy} which satisfies $v(r)r^{n-2-\kappa(\eta)} \to c(\Q) \in \RR \backslash \{0\}$ as $r \to 0$.

  \item If $l>I(\eta)$, then the origin is the unique critical point and it is attractive. If $\phi(t,\Q) \to (0,0)$ as $t \to +\infty$
  but $\Q \not\in M^s$, then $\phi(t,\Q) \eu^{-\Lambda(l)t} \to c(\Q) v_{\Lambda(l)}$ as $t \to +\infty$, see~\cite{CodLev}. 
  So there is a two parameters family of \sdsol-solutions
  $v(r)$ of~\eqref{rad.hardy} which satisfies $v(r)r^{n-2-\kappa(\eta)} \to c(\Q) \in \RR \backslash \{0\}$ as $r \to +\infty$.
\end{enumerate}
\end{remark}
The critical cases $l=2_*(\eta),2^*,I(\eta)$ will be considered in Proposition~\ref{asymp.critical} and Remark~\ref{asymp.criticalbis} below.

The structure of radial solutions of~\eqref{laplace} is generally obtained using some Pohozaev type identity, see~\cite{Po}.
In this context we locate $M^u$ and $M^s$ through a Lijapunov function, which is the transposition in this context of
the Pohozaev function, see~\cite{MoSc}.
In fact, it is possible to obtain from~\eqref{sist} a second order differential equation
\begin{equation}\label{ODE}
\ddot x - (\alpha_l + \gamma_l) \dot x + (\alpha_l \gamma_l +\eta) x + K x |x|^{q-2} = 0\,,
\end{equation}
which suggests the introduction of the energy function
\begin{equation}\label{defE}
E(x,y)= \frac{(\alpha_l x +y)^2}{2} + (\alpha_l \gamma_l+\eta) \frac{x^2}{2}+K\frac{|x|^q}{q} \,.
\end{equation}
From a straightforward computation, we find
\begin{equation}\label{derivoE}
\frac{d}{dt}E(\dot{x}(t),x(t))= (\alpha_l+\gamma_l) \big(\alpha_l x(t) +y(t) \big)^2 \,.
\end{equation}
Notice that $\alpha_l+\gamma_l$ is  positive, null, or negative, respectively when $l<2^*$, $l=2^*$, $l>2^*$.
Further   the Poincar\'e-Bendixson criterion forbids the presence of periodic trajectories for $l \ne 2^*$.
Using this information it is possible to prove the following.
\begin{lemma}\label{picture}
 The phase portraits for~\eqref{sist} are as in Figure~\ref{posportrait} when $K>0$.
 The bifurcation diagram is sketched in Figure~\ref{biffig}.
 \end{lemma}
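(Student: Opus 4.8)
The plan is to treat this as a global phase-plane analysis of~\eqref{sist} for $K>0$, organised by the sign-definite energy $E$ of~\eqref{defE} and split according to the position of $l$ relative to the three thresholds $2_*(\eta)$, $2^*$, ${\rm I}(\eta)$. The local data are already in hand: the origin is a saddle, an (un)stable node, or a degenerate node according to the ranges recorded before the statement, and when $2_*(\eta)<l<{\rm I}(\eta)$ the further rest points $\bs{P^+},\bs{P^-}$ appear, unstable for $l<2^*$, centers for $l=2^*$, stable for $l>2^*$. What remains is to glue these local pictures into the portraits of Figure~\ref{posportrait}, i.e.\ to identify the $\alpha$- and $\omega$-limit sets of every trajectory and in particular to locate the branches $M^{u,\pm}$ and $M^{s,\pm}$.

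First I would exploit~\eqref{derivoE}: since $\alpha_l+\gamma_l$ is positive, zero, or negative exactly when $l<2^*$, $l=2^*$, $l>2^*$, the energy $E$ is monotone along orbits and strictly so off the isocline $\{\alpha_l x+y=0\}$. As this line carries no orbit except the rest points (it is the nullcline $\dot x=0$, crossed transversally), $E$ is a strict Lyapunov function whenever $l\neq 2^*$. Together with the Poincar\'e--Bendixson criterion this rules out periodic orbits and homoclinic/heteroclinic cycles, so the $\omega$-limit (resp.\ $\alpha$-limit) of every bounded orbit is a single rest point. A short computation using $P_y=-\alpha_l P_x$ and $KP_x^{q-2}=-\alpha_l\gamma_l-\eta$ gives the energy ordering $E(\bs{P^\pm})<0=E(0,0)$ in the saddle range; read against the monotonicity direction of $E$, this forces the connections. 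For $2_*(\eta)<l<2^*$, the increase of $E$ shows that $M^{u,+}$ (leaving the saddle at $E=0$) can neither return to the origin nor reach the lower-energy $\bs{P^+}$, so it escapes, whereas $M^{s,+}$ must issue from the unstable point $\bs{P^+}$, producing the heteroclinic $\bs{P^+}\to$ origin responsible for the fast-decay solution; the range $2^*<l<{\rm I}(\eta)$ is the time-reversed mirror.

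For the degenerate ranges $2<l\le 2_*(\eta)$ and $l\ge {\rm I}(\eta)$ only the origin survives, $E$ is strictly monotone, and there are no other limit sets, so every nontrivial orbit joins the origin to infinity (as $t\to-\infty$ in the repulsive cases $l\le 2_*(\eta)$, as $t\to+\infty$ in the attractive cases $l\ge {\rm I}(\eta)$), with tangency directions fixed by $v_{\lambda(l)},v_{\Lambda(l)}$ and by the strongly (un)stable manifold of Lemma~\ref{manifold2}. The conservative case $l=2^*$ is handled separately and is the cleanest: here $E$ is constant on orbits, so the portrait is read off the level sets of~\eqref{defE}; the centers $\bs{P^\pm}$ sit at $E(\bs{P^\pm})<0$ encircled by periodic orbits, the level $E=0$ forms the two homoclinic loops to the saddle at the origin, and the exterior levels are unbounded. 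The bifurcation diagram of Figure~\ref{biffig} then records the birth/death of $\bs{P^\pm}$ as $l$ enters/leaves $(2_*(\eta),{\rm I}(\eta))$ and their exchange of stability through the center at $l=2^*$.

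The main obstacle I anticipate is the global control of $M^{u,\pm},M^{s,\pm}$ together with the behaviour at infinity: the Lyapunov monotonicity and the local theory constrain the limit sets but do not by themselves force a given branch to connect to $\bs{P^+}$ rather than wander off. To close this I would combine the nullcline geometry recorded before the statement (the position of $\{\dot x=0\}$ relative to $T^u=T(\kappa(\eta))$ and $T^s=T(n-2-\kappa(\eta))$ in each $l$-range) with a trapping-region/shooting argument, confining $M^{u,+}$ to the sector bounded by the relevant nullcline and the segment toward $\bs{P^+}$, and using the continuability for $K>0$ (so that $D^\infty=L^\infty=+\infty$, cf.\ Lemma~\ref{manifold}) to guarantee that escaping orbits are genuinely defined for all $t$ and do not re-enter. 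Being conservative, the case $l=2^*$ sidesteps this difficulty entirely, so the real work is concentrated in the four non-critical ranges.
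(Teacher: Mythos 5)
Your energy-based skeleton is essentially the paper's: monotonicity of $E$ from~\eqref{derivoE}, the Poincar\'e--Bendixson criterion to exclude periodic orbits for $l\neq 2^*$, the ordering $E(\bs{P^\pm})<0=E(0,0)$ read against the direction of monotonicity to force the bounded heteroclinic connections inside $\{E<0\}$, and the Hamiltonian reading of $l=2^*$. But there is a genuine gap at exactly the point you flag as the ``main obstacle'', and your proposed fix points in the wrong direction. What Figure~\ref{posportrait} asserts about the non-compact pieces is not merely that $M^{u}$ (for $2_*(\eta)<l<2^*$), $M^{s}$ (for $2^*<l<I(\eta)$), and all orbits in the node ranges escape to infinity: it is that every unbounded trajectory crosses both coordinate axes \emph{indefinitely}, i.e. these manifolds are unbounded double spirals winding clockwise (resp. counter-clockwise) around the origin. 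This winding cannot be extracted from Lyapunov monotonicity plus local invariant-manifold theory; the paper obtains it from a separate dynamical ingredient, namely that for $|x|$ large the superlinear term $Kx|x|^{q-2}$ dominates the linear part, so that in polar coordinates the angular velocity of ``large'' solutions is bounded away from zero (adapting the argument of~\cite{Fdie}). This is not a cosmetic detail: the spiralling of $M^s$ is precisely what produces the infinitely many intersections $\bs{Q_j}$ in Lemma~\ref{stella}, i.e. the solutions with arbitrarily many zeroes in the main theorems.

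Moreover, your proposed remedy --- confining $M^{u,+}$ to an invariant sector bounded by the nullcline $\{\dot x=0\}$ and a segment toward $\bs{P^+}$ --- is incompatible with the actual behaviour: for $K>0$ no unbounded branch of $M^u$ or $M^s$ remains in any sector, precisely because it spirals through all four quadrants. The nullcline/half-plane trapping data you quote are the tool used in the opposite case $K<0$ (Lemma~\ref{kmeno} and Lemma~\ref{pictureneg}), where the invariant manifolds are indeed graphs; they do not yield invariant regions here. To complete the proof you should replace the trapping argument by the rotation estimate: show that along any orbit with $\|(x,y)\|$ large the polar angle decreases (for the clockwise cases) at a rate bounded away from zero, so unbounded orbits wind indefinitely; combined with your energy and Poincar\'e--Bendixson arguments this yields the portraits of Figure~\ref{posportrait} and the bifurcation diagram of Figure~\ref{biffig}.
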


\begin{figure}[t!]
\centerline{\epsfig{file=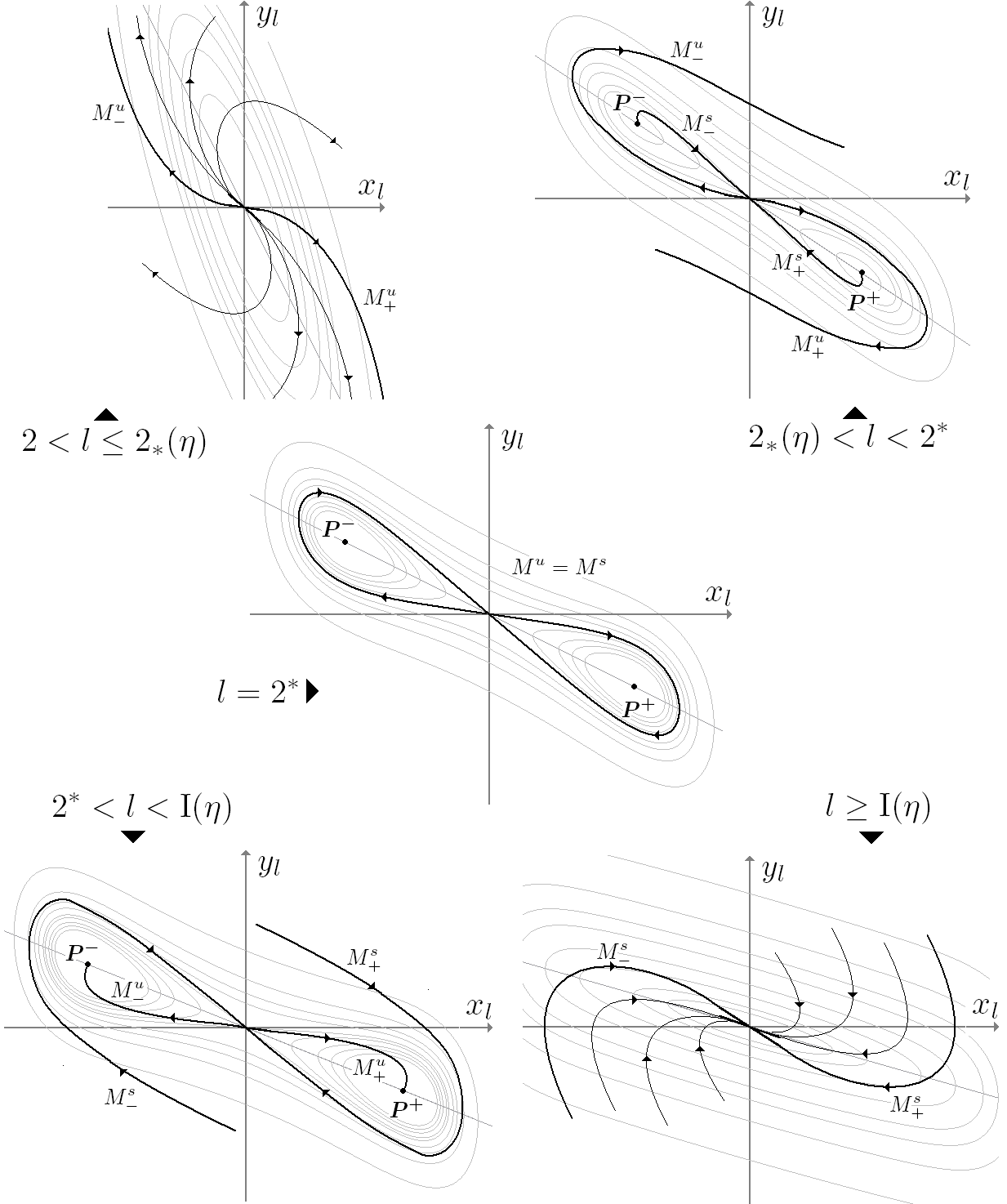, width = 12 cm}}
\caption{\small
The phase portrait of system~\eqref{sist}, for $K>0$, when $l>2$ varies. Some level curves of the energy $E$ defined in~\eqref{defE} are drawn. The energy $E$ is increasing for $l<2^*$ and decreasing for $l>2^*$. The origin is a global repeller for $2<l\leq 2_*(\eta)$ and we can identify the strongly unstable manifold $M^u$. If $2_*(\eta)<l<{\rm I}(\eta)$ the origin is a saddle and we can recognize an unstable manifold $M^u$ and a stable manifold $M^s$.
The nontrivial equilibria $\bs{P^\pm}$ have negative energy and are points of minimum. If $l=2^*$ the system is Hamiltonian and presents periodic orbits and two homoclinic trajectories, while if $l\neq 2^*$ there exists two heteroclinic trajectories. Finally, if $l\geq {\rm I}(\eta)$ the origin is a global attractor and we can identify the strongly stable manifold $M^s$.
}
\label{posportrait}
\end{figure}

 \begin{proof}
 We sketch the
argument for convenience of the reader.

$\bullet$ \textbf{Assume} $\bs{K>0}$ \textbf{and} $\bs{l \in (2_*(\eta), I(\eta))}$ (i.e. $\alpha_l \gamma_l +\eta<0$).\\
In this case the level sets of $E$ are bounded, and the $0$-set is a
$8$ shaped curve and the origin is the ``junction of the $8$'': $E$ is negative inside and positive outside.
There are two critical points $\bs{P^+}$ and $\bs{P^-}$ with $E(\bs{P^+})=E(\bs{P^-})<0$.
Observe further that if a trajectory is unbounded, either in the past or in the future, then it has to cross the coordinate axis indefinitely:
 it simply depends on the fact that the nonlinear term $K x|x|^{q-2}$
is ruling for $x$ large enough. By the way
this can be proved adapting the argument of
\cite{Fdie}, i.e., introducing polar coordinates and studying the angular velocity of ``large'' solutions.

Using this information we see that
\begin{enumerate}
    \item If $2_*(\eta)<l<2^*$ then $M^s$ is made by  two paths $M^s_+$ and $M^s_-$, from  the critical points $\bs{P^+}$ and $\bs{P^-}$ respectively
   to the origin; $M^s_+$ is in $\{E<0\}\cap\{x \ge 0\}$ and $M^s_-$ is
   obtained by symmetry. $M^u$ is an unbounded double spiral rotating from the origin clockwise.
     \item If $l=2^*$ then $E$ is a first integral so  $M^u=M^s$ and they are the union of two homoclinic, one contained in $x>0$ one in $x<0$.
Inside and outside there are periodic trajectories.
  \item If $2^*<l<I(\eta)$ then $M^u$ is made by  two paths $M^u_+$ and $M^u_-$, from the origin to the critical points $\bs{P^+}$ and $\bs{P^-}$ respectively; $M^u_+$ is in $\{E<0\}\cap\{x \ge 0\}$ and $M^s_-$ is symmetric. $M^s$ is an unbounded double spiral rotating from the origin counter-clockwise.
\end{enumerate}
 $\bullet$ \textbf{Assume} $\bs{K>0}$ \textbf{and} $\bs{l \in (2,2_*(\eta)] \cup [I(\eta),+\infty))}$ (i.e. $\alpha_l \gamma_l +\eta>0$).\\
Then  the level sets of $E$ are bounded concentric curves, centered in the origin. The origin is the unique critical point and it is
an unstable node if $2<l < 2_*(\eta)$ (and a stable node if $ l > I(\eta)$),   and it has a center manifold for $l=2_*(\eta),I(\eta)$.
Therefore,  if $2<l \le 2_*(\eta)$ (respectively if $ l \ge I(\eta)$), all the trajectories are unbounded spiral crossing indefinitely the
 coordinate axes, converging to the origin as
$t \to -\infty$ (resp. as $t \to +\infty$) and unbounded as $t \to +\infty$ (resp. as $t \to -\infty$). In particular this holds for trajectories
of the strongly unstable manifold $M^u$ (resp. the strongly  stable manifold $M^s$).
\end{proof}
Now we go back to consider the asymptotic behavior of \Ssol-solutions and \sdsol-solutions
in the critical cases $l=2_*(\eta),I(\eta)$, and then $l=2^*$. The proof of the following Lemma is an adaption of the proof of  Corollary 2.5, developed in \cite[Appendix]{Fduegen}, where it is worked out in the $p$-Laplace context.

\begin{lemma}\label{asymp.critical}
Assume $K>0$ and $l=2_*(\eta)$, let $\Q \not\in M^u$. Consider the trajectory $\phi_l(t,\Q)$ of~\eqref{sist} and the corresponding solution
$v(r)$ of~\eqref{rad.hardy}. Then $ v(r)r^{n-2-\kappa(\eta)} |\ln (r)|^{\frac{1}{(q-2)}}$ is bounded between two positive constants as $r \to 0$.

Assume $K>0$ and $l=I(\eta)$, let $\R \not\in M^s$. Consider the trajectory $\phi_l(t,\R)$ of~\eqref{sist} and the corresponding
solution $v(r)$ of~\eqref{rad.hardy}. Then $ v(r)r^{n-2-\kappa(\eta)} [\ln (r)]^{\frac{1}{(q-2)}}$ is bounded between two positive constants as $r \to \infty$.
\end{lemma}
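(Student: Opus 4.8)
The plan is to collapse both critical cases to one and the same scalar asymptotic analysis near a degenerate equilibrium. The starting observation is that $l=2_*(\eta)$ and $l={\rm I}(\eta)$ are precisely the two values at which $\det\!\begin{pmatrix}\alpha_l&1\\-\eta&\gamma_l\end{pmatrix}=\alpha_l\gamma_l+\eta=0$, i.e. one eigenvalue of the linearization of~\eqref{sist} vanishes ($\lambda(l)=0$ at $2_*(\eta)$, $\Lambda(l)=0$ at ${\rm I}(\eta)$), while the other, $\beta:=\alpha_l+\gamma_l$, is nonzero: $\beta=\Lambda(l)>0$ at $l=2_*(\eta)$ and $\beta=\lambda(l)<0$ at $l={\rm I}(\eta)$ (consistently with $2_*(\eta)<2^*<{\rm I}(\eta)$ and the sign of $\alpha_l+\gamma_l$). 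Substituting $\alpha_l\gamma_l+\eta=0$ into the second order equation~\eqref{ODE}, it collapses to $\ddot x-\beta\dot x+Kx|x|^{q-2}=0$, so the whole problem reduces to the study of this single ODE near $x=0$.

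Next I would identify the relevant orbit. Since the hyperbolic manifold $M^u$ (resp. $M^s$), characterized by~\eqref{charac} (resp.~\eqref{characbis}), is the only orbit reaching the origin tangent to $v_{\Lambda(l)}$ (resp. $v_{\lambda(l)}$), a point $\Q\notin M^u$ (resp. $\R\notin M^s$) gives an orbit that approaches the origin tangentially to the center eigendirection ($v_{\lambda(l)}$ as $t\to-\infty$ when $l=2_*(\eta)$, $v_{\Lambda(l)}$ as $t\to+\infty$ when $l={\rm I}(\eta)$). By the symmetry $x\mapsto-x$ it suffices to treat the branch $x>0$. Reducing the flow to the one–dimensional center manifold $W^c$ and writing $\dot x=g(x)$ on it, one has $\ddot x=g'(x)g(x)$, hence $g'g-\beta g+Kx|x|^{q-2}=0$; matching the lowest order term forces $g(x)=\tfrac{K}{\beta}\,x|x|^{q-2}\bigl(1+o(1)\bigr)$ as $x\to0$, the curvature term $g'g$ being of higher order. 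Note $K/\beta>0$ for $l=2_*(\eta)$ and $K/\beta<0$ for $l={\rm I}(\eta)$, exactly the sign making the orbit tend to $0$ in the prescribed time direction; this matches the monotonicity of the energy $E$, which at the critical $l$ reduces by~\eqref{defE}--\eqref{derivoE} to $E=\tfrac12(\alpha_l x+y)^2+K|x|^q/q$ with $\dot E=\beta(\alpha_l x+y)^2$.

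The asymptotic integration is then straightforward: setting $z(t)=|x(t)|^{-(q-2)}$ turns $\dot x=g(x)$ into $\dot z=-(q-2)\tfrac{K}{\beta}\bigl(1+o(1)\bigr)$, so $\dot z\to a:=-(q-2)K/\beta\neq0$; a Ces\`aro (de l'H\^opital) argument gives $z(t)/t\to a$, i.e. $|x(t)|^{-(q-2)}\sim(q-2)\tfrac{|K|}{|\beta|}\,|t|$, whence $|x(t)|\,|t|^{1/(q-2)}$ converges to a positive constant, and in particular is trapped between two positive constants as $t\to\mp\infty$. Finally I translate back through~\eqref{transf}: since $x_l(t)=v(r)r^{\alpha_l}$ and $|t|=|\ln r|$, the bound on $x_l\,|t|^{1/(q-2)}$ is precisely the asserted two–sided bound on $v(r)r^{\alpha_l}|\ln r|^{1/(q-2)}$, the governing exponent being the nonlinear one $\alpha_l$, which equals $n-2-\kappa(\eta)$ when $l=2_*(\eta)$ (tangency to $T^s$, as $r\to0$) and $\kappa(\eta)$ when $l={\rm I}(\eta)$ (tangency to $T^u$, as $r\to\infty$).

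The hard part is the rigor of the center–manifold reduction in this degenerate (zero–eigenvalue) regime: one must justify that the selected orbit really inherits the reduced asymptotics $\dot x=\tfrac{K}{\beta}x|x|^{q-2}(1+o(1))$ uniformly, since the usual hyperbolic estimates break down when an eigenvalue vanishes. I would dispatch this either by invoking center–manifold theory together with the exponential attraction of nearby orbits to $W^c$, or---more in the spirit of the cited adaptation of Corollary~2.5 in~\cite{Fduegen}---by building explicit sub/super-solutions (invariant cones in the $(x,\dot x)$ plane) that sandwich $\dot x/\bigl(x|x|^{q-2}\bigr)$ between two positive constants near the origin; integrating the resulting differential inequalities then yields the two–sided bound on $|x|\,|t|^{1/(q-2)}$ directly, without needing the full limit. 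The case $l={\rm I}(\eta)$ requires no new idea beyond the sign change $\beta<0$ and reversing the time direction, both already accounted for above.
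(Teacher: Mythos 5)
Your proposal is sound in substance and lands on the same polynomial decay rate, but it takes a genuinely different route from the paper. The paper never performs a center-manifold reduction: it first treats $\eta=0$, $l=2_*(0)$, where~\eqref{sist} becomes the system~\eqref{ss}, proves by a trapping/contradiction argument (the step $X(t)=\int_{-\infty}^t\dot X(s)\,ds<0$) that the orbit eventually lies in the cone $T^+(A)$, deduces the two-sided inequality~\eqref{diseq}, i.e. $(A-\ep)X<-Y<AX$, and then integrates $\dot Z=KX^{q-1}$ with $Z=-Y$ as a differential inequality to obtain~\eqref{stimaperX}; the general case $\eta<\frac{(n-2)^2}{4}$ is reduced to this one by the linear substitution $\mathcal Y=Y-\gamma_l X$ (possible precisely because $\alpha_l\gamma_l+\eta=0$), and $l=I(\eta)$ is handled by reversing time. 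Your route --- the graph equation $g'g-\beta g+Kx|x|^{q-2}=0$ on a center manifold, the matching $g(x)=\frac{K}{\beta}x|x|^{q-2}(1+o(1))$, and the integration via $z=|x|^{-(q-2)}$ with a Ces\`aro argument --- treats both critical values uniformly through the collapsed equation~\eqref{ODE} and yields a slightly stronger conclusion (convergence of $|x(t)|\,|t|^{1/(q-2)}$ to an explicit constant, not merely two-sided bounds). The price is exactly the rigor issue you flag at the degenerate equilibrium; your fallback via invariant cones and sub/super-solutions is, in essence, the mechanism the paper actually uses, so the two arguments reconcile at that point. One prerequisite you pass over quickly: before speaking of tangency to the center direction you must know that an orbit with $\Q\notin M^u$ (resp.\ $\R\notin M^s$) converges to the origin at all as $t\to-\infty$ (resp.\ $t\to+\infty$); the paper gets this from the phase portrait, Lemma~\ref{picture}, via the monotonicity of $E$.

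There is also one substantive point you should not have glossed over. Your back-translation through~\eqref{transf} gives a bound on $v(r)r^{\alpha_l}|\ln r|^{1/(q-2)}$, and at $l=I(\eta)$ one has $\alpha_l=\kappa(\eta)$, so your (correct) estimate concerns $v(r)r^{\kappa(\eta)}[\ln r]^{1/(q-2)}$, whereas the statement as printed asserts the bound for $v(r)r^{n-2-\kappa(\eta)}[\ln r]^{1/(q-2)}$; since $n-2-2\kappa(\eta)=\sqrt{(n-2)^2-4\eta}>0$, the printed quantity in fact diverges as $r\to\infty$. The paper's own time-reversal argument also produces the exponent $\kappa(\eta)$, so this is an inconsistency in the statement rather than an error in your analysis --- but claiming your bound ``is precisely the asserted'' one, instead of flagging the mismatch, is the one real defect in your write-up.
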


\begin{proof}
Assume first $\eta=0$, $l=2_*(0)$: in this case,~\eqref{sist} is
\begin{equation}\label{ss}
\begin{cases}
\dot x = A x + y\\
\dot y = - K x |x|^{q-2}
\end{cases}
\end{equation}
with $A=n-2$. Let $\Q \not\in M^u$, then by Lemma~\ref{picture} we see that
$\lito \phi_l(t,\Q)=(0,0)$ (this depends on the fact that $E$ is increasing along the trajectories). Let $\phi_l(t,\Q)=(X(t),Y(t))$, and assume to fix the ideas that $X(t)>0$ for $t\ll0$. {From standard tools in invariant manifold theory, see e.g.~\cite{CodLev},
we see   that $\phi_l(t,\Q)$  approaches the line $T(A)=T(n-2)$ as $t \to -\infty$ (i.e. the central direction)} and that it converges to $(0,0)$ polinomially.

We claim  that \textbf{there is $T \in \RR$ such that   $\phi_l(t,\Q) \in T^+(A)$
for $t< T$.}

In fact the flow  of~\eqref{sist} on $T(A)$ points towards  $T^-(A)$, therefore we assume there is $\tau \in \RR$ such that  $\phi_l(t,\Q) \in T^-(A)$ for any $t< \tau$ and $X(\tau)\geq 0$ (otherwise
the claim is proved).
Then $\dot{X}(t)<0<X(t)$ for $t< \tau$, but $X(t)=\int_{-\infty}^t\dot{X}(s)ds<0$ which gives a contradiction and proves the claim .

\medbreak

It follows that there is $t_0<T$ such that $\dot{X}(t)>0$ and $Y(t)=-A X(t)+ h(X(t))$, where $h(x)=o(x)$ as $x \to 0$ and it is positive. In particular for any $\ep>0$ we can choose
$t_0=t_0(\ep)$ such that
\begin{equation}\label{diseq}
(A-\ep) X(t)<-Y(t)<A X(t)
\end{equation}
for $t< t_0$. Further we see that $Y<0$, $\dot Y<0$ for $t<t_0$. So setting $Z(t)=-Y(t)$,
we get
 $\dot Z = K X^{q-1}$, and from ~\eqref{diseq} we find
$0<M_1  < Z^{1-q}(t) \dot{Z}(t)  < M_2$ for some suitable constants $M_i$.
So, integrating and using~\eqref{diseq},
we can find some positive constants $C_i$ such that
  \begin{equation}\label{stimaperX}
      C_1(1+M_1 |t|)^{-1/(q-2)} \le X(t) \le C_2(1+M_2 |t|)^{-1/(q-2)}
  \end{equation}
  for $t<t_0$, and   part of the lemma concerning the case $l=2_*(0)$ follows.
  Observe now that we have proved the result for an equation of the form
  \begin{equation}\label{ddotbis}
    \ddot{X}=A \dot X-K X|X|^{q-2}
  \end{equation}
  where, in this case, $K>0$ and $A=\alpha_l=n-2$. Notice that we can let $A$ be any positive constant
  and the proof still goes through.

  Now assume $\eta <\frac{(n-2)^2}{4}$ and $l=2_*(\eta)$; then $\alpha_l=n-2-\kappa(\eta)$. Consider the trajectory $\phi_l(t,\Q)=(X(t),Y(t))$ of~\eqref{sist}  with $\Q \not\in M^u$, and the corresponding solution $v(r)$ of~\eqref{rad.hardy}.
  Now, we have $A=\alpha_l+\gamma_l=n-2-2\kappa(\eta)=\sqrt{(n-2)^2-4\eta}$ and  $\alpha_l \gamma_l + \eta =0$  in~\eqref{ODE}, thus giving a differential equation as in~\eqref{ddotbis}. Introducing the variables $\mathcal X(t) = X(t)$ and $\mathcal Y(t) = Y(t) - \gamma_l X(t)$ we obtain a system as in~\eqref{ss}.
  Hence, arguing as above, $\phi_l(t,\Q)$ converges to the origin polynomially; repeating the previous argument we find
   again estimates as in~\eqref{stimaperX}, and consequently for $r< \eu^{t_0}$ we get
     \begin{equation}\label{stimaperX2}
      C_1(1+M_1 |\ln(r)|)^{-\frac{1}{q-2}} \le v(r)r^{n-2-\kappa(\eta)} \le C_2(1+M_2 |\ln(r)|)^{-\frac{1}{q-2}} \,.
  \end{equation}

  When $l=I(\eta)$,   we find that the origin is stable even in its central direction,
   and the Lemma can be obtained reasoning as above but reversing time.
\end{proof}
Using the fact that for $l=2^*$ the bounded set enclosed by the homoclinic trajectory is filled by periodic solutions we get the following.
\begin{remark}\label{asymp.criticalbis}
Assume $l=2^*$, then there is a positive \sol S0s  $V(r)=P_x r^{-\alpha_l}$ and a two parameter family of \Ssol-solutions $W(r)$ such that $W(r)r^{\alpha_l}$ is uniformly positive and bounded for any $r>0$. All these solutions are in fact positive \sol S0s.
\end{remark}

\subsection{Phase portraits of~\eqref{sist}  for $K<0$}\label{min0}

\begin{figure}[t!]
\centerline{\epsfig{file=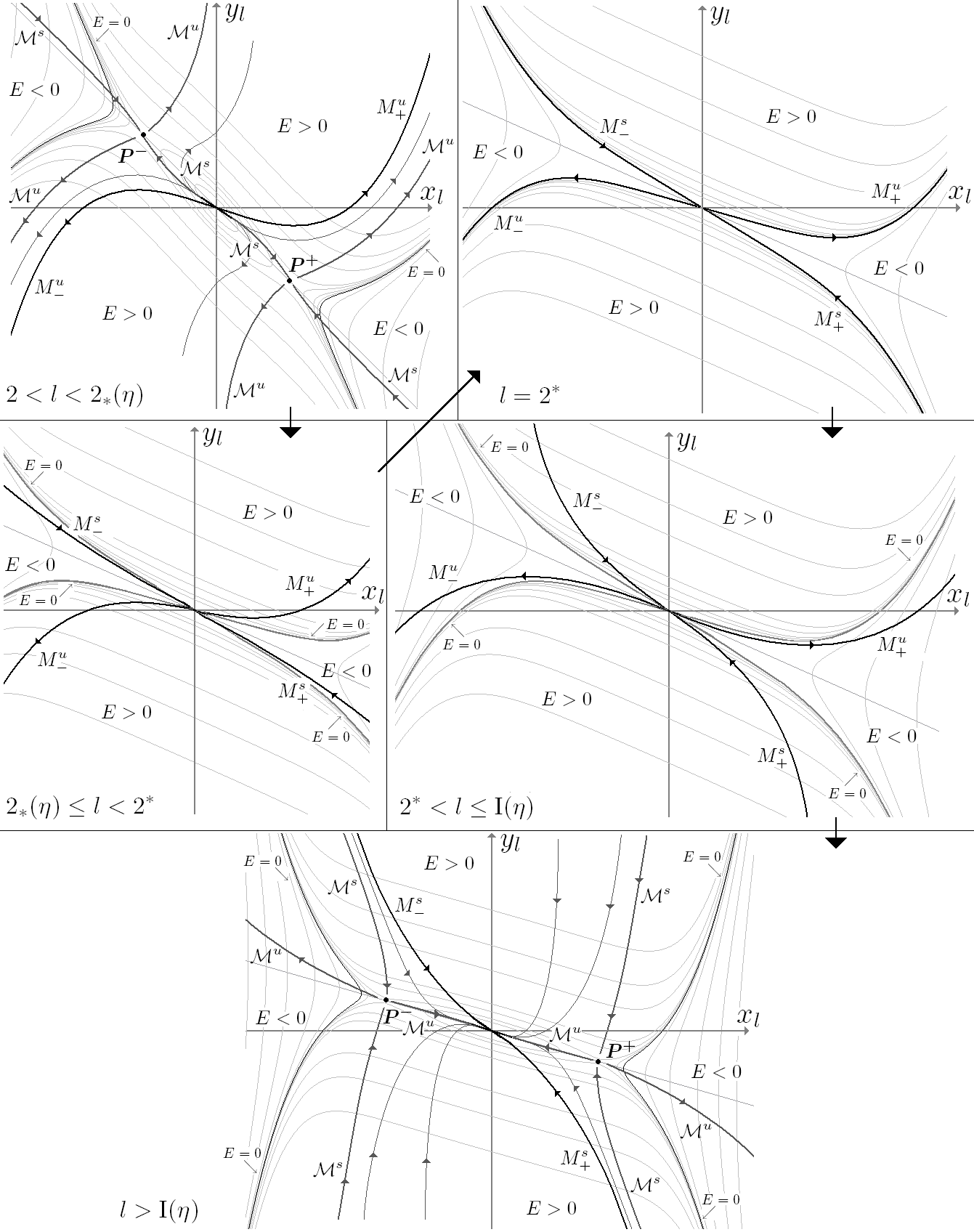, width = 12 cm}}
\caption{\small
The phase portrait of~\eqref{sist}, for $K<0$, when $l>2$ varies. Some level curves of the energy $E$ are drawn. The origin is the unique equlibrium if $l\in[2_*(\eta), {\rm I}(\eta)]$ (the system is Hamiltonian if $l=2^*$). If $l< 2_*(\eta)$ the origin is repulsive and if $l>{\rm I}(\eta)$ it is attractive. In these cases, there exist two non-trivial equilibria which are saddles. We identify their unstable and stable manifolds respectively with $\mathcal M^u$ and $\mathcal M^s$. The origin is a local minimum of the energy $E$, which is increasing for $l<2^*$ and decreasing for $l>2^*$. Notice the heteroclinic connections in these cases.
}
\label{negportrait}
\end{figure}

Also in this case, for some values of the parameters,~\eqref{sist} admits the critical points
$\bs{P^\pm}=(\pm P_x,\mp \alpha_l P_x)$ where $P_x= \left[(-\alpha_l \gamma_l - \eta)) /K\right]^{\frac{1}{q-2}}$.
\begin{remark}\label{singandslowbis}
Assume $f$ is as in~\eqref{auto} and $K<0$.
\begin{enumerate}
 \item If $2_*(\eta)\le l \le I(\eta)$, then system~\eqref{sist} admits no critical points.
 \item If either $2<l<2_*(\eta)$ or $l>I(\eta)$, then there is a critical point $\bs{P^\pm}$ and correspondingly  a positive \sol S0s $V(r)=P_x r^{-\alpha_l}$. Further   $\bs{P^\pm}$ is  a saddle, so there
  are two $1$-parameter families respectively of \Ssol-solutions $v(r)$, and of \sdsol-solutions $w(r)$ such that $v(r)r^{\alpha_l} \to P_x$
 as $r \to 0$ and
 $w(r)r^{\alpha_l} \to P_x$  as $r \to +\infty$.
\end{enumerate}
\end{remark}

A simple computation gives the following.

\begin{lemma}\label{kmeno}
For any $l>2$ the flow on~\eqref{sist} on $T(m)$ points towards $T^+(m)$ whenever $m \in [\kappa(\eta), n-2-\kappa(\eta)]$.
Further, if either  $m  <\kappa(\eta)$, or $m> n-2-\kappa(\eta)$, then there is $S(m)$ such that the flow on~\eqref{sist} on $(x,-mx) \in T(m)$ points towards $T^+(m)$ iff $x>S(m)$, it is tangent to $T(m)$ if $x=S(m)$ and points towards $T^-(m)$ iff $0<x<S(m)$.
\end{lemma}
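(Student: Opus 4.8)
The plan is to reduce the statement to the sign of a single scalar function evaluated along the ray $T(m)$. Set $g(x,y)=mx+y$, so that $T(m)=\{g=0,\ x>0\}$ and $T^{\pm}(m)=\{\pm g>0,\ x>0\}$. At a point of $T(m)$ the flow of~\eqref{sist} crosses towards $T^+(m)$, is tangent to $T(m)$, or crosses towards $T^-(m)$ according to whether $\dot g=m\dot x+\dot y$ is positive, zero, or negative there. Thus the whole lemma reduces to determining the sign of $\dot g$ restricted to the line $y=-mx$, $x>0$.

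First I would compute $\dot g$ directly from~\eqref{sist}. Using $\dot x=\alpha_l x+y$ and $\dot y=-\eta x+\gamma_l y-Kx|x|^{q-2}$, and substituting $y=-mx$ with $x>0$, one obtains
\[
\dot g\big|_{T(m)}=x\big[\,{-}m^2+(\alpha_l-\gamma_l)m-\eta-Kx^{q-2}\,\big].
\]
Since $\gamma_l=\alpha_l+2-n$ gives $\alpha_l-\gamma_l=n-2$, the bracket becomes $Q(m)-Kx^{q-2}$ with $Q(m):=-m^2+(n-2)m-\eta$. Note that the cancellation $\alpha_l-\gamma_l=n-2$ removes every trace of $l$ from $Q$, which is exactly why the conclusion holds uniformly for all $l>2$.

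The key algebraic observation is that $\kappa(\eta)$ and $n-2-\kappa(\eta)$ are precisely the two roots of $z^2-(n-2)z+\eta=0$: their sum is $n-2$ and, by the definition of $\kappa(\eta)$, their product is $\eta$. Hence $Q(m)=-(m-\kappa(\eta))\big(m-(n-2-\kappa(\eta))\big)$, so that $Q(m)\ge 0$ iff $m\in[\kappa(\eta),n-2-\kappa(\eta)]$ and $Q(m)<0$ otherwise. Now I invoke $K<0$. If $m\in[\kappa(\eta),n-2-\kappa(\eta)]$, then $Q(m)\ge 0$ while $-Kx^{q-2}>0$ for every $x>0$, so the bracket is strictly positive and $\dot g>0$ all along $T(m)$; the flow therefore always points towards $T^+(m)$, which is the first assertion.

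For the second assertion, suppose $m<\kappa(\eta)$ or $m>n-2-\kappa(\eta)$, so $Q(m)<0$. Regarded as a function of $x>0$, the bracket $x\mapsto Q(m)-Kx^{q-2}$ is continuous and strictly increasing (since $q>2$ and $-K>0$), equals $Q(m)<0$ in the limit $x\to 0^+$, and tends to $+\infty$ as $x\to\infty$. Consequently it has a unique zero $S(m)=(Q(m)/K)^{1/(q-2)}>0$, is negative on $0<x<S(m)$ and positive on $x>S(m)$. Reading off the sign of $\dot g=x\cdot(\text{bracket})$ then yields tangency to $T(m)$ exactly at $x=S(m)$, and the two stated crossing directions on either side. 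The argument is an entirely elementary computation; the only points demanding care are recognizing the factorization of $Q$ through $\kappa(\eta)$ and keeping consistent track of the sign $K<0$, so I anticipate no genuine obstacle.
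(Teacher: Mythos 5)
Your proof is correct and follows essentially the same route as the paper's: both compute the crossing quantity $m\dot x+\dot y$ along $T(m)$, use $\alpha_l-\gamma_l=n-2$ to eliminate $l$, identify $\kappa(\eta)$ and $n-2-\kappa(\eta)$ as the roots of $z^2-(n-2)z+\eta$, and conclude from $K<0$ and $q>2$. The only difference is that you spell out the monotonicity in $x$ and the explicit value $S(m)=(Q(m)/K)^{1/(q-2)}$, which the paper leaves implicit.
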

\begin{proof}
From a straightforward computation we see that  if $\Q=(X,-m X ) \in T(m)$, and $\phi(t,\Q)=(x(t),y(t))$ is the corresponding trajectory of~\eqref{sist}, then
$$\dot{y}(0)+m \dot{x}(0)= -X [m^2- (n-2)m+ \eta] -KX^{q-1} \,.$$
Since $m^2- (n-2)m+ \eta \leq 0$  iff $m \in [\kappa(\eta), n-2-\kappa(\eta)]$, and $q>2$ the thesis follows.
\end{proof}

We can now draw the phase portrait.

\begin{lemma}\label{pictureneg}
 The phase portraits for~\eqref{sist} are as in Figure~\ref{negportrait} when $K<0$.
  The bifurcation diagram is sketched in Figure~\ref{biffig}.
 \end{lemma}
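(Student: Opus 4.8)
The plan is to mirror the argument for Lemma~\ref{picture}, exploiting the energy $E$ in~\eqref{defE}, the sign of its derivative~\eqref{derivoE}, the Poincar\'e--Bendixson criterion (which forbids periodic orbits for $l\neq 2^*$), and the equilibrium classification already recorded in Remark~\ref{singandslowbis}. The decisive structural difference with respect to the case $K>0$ is that, when $K<0$, the term $K|x|^q/q$ in~\eqref{defE} is \emph{negative}, so $E$ is no longer coercive: indeed $E(x,y)\to-\infty$ as $|x|\to\infty$ along the line $y=-\alpha_l x$, the level sets are unbounded, and trajectories may blow up in finite time. This is why the non-trivial equilibria are now saddles rather than centers/minima, and why heteroclinic connections (in place of the homoclinics and double spirals of the $K>0$ picture) appear.

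First I would analyze $E$ together with the equilibria. A direct computation shows that $\nabla E=0$ exactly at the origin and at $\bs{P^\pm}$, so the critical points of $E$ coincide with the rest points of~\eqref{sist}. The Hessian of $E$ at the origin has determinant proportional to $\alpha_l\gamma_l+\eta$, hence the origin is a nondegenerate local minimum of $E$ when $\alpha_l\gamma_l+\eta>0$ (that is $l<2_*(\eta)$ or $l>I(\eta)$) and a saddle of $E$ when $\alpha_l\gamma_l+\eta<0$ (that is $2_*(\eta)<l<I(\eta)$). At $\bs{P^\pm}$, where they exist (i.e. $\alpha_l\gamma_l+\eta>0$), one has $\alpha_l x+y=0$, so $E(\bs{P^\pm})=(\alpha_l\gamma_l+\eta)P_x^2\frac{q-2}{2q}>0$, and the Hessian determinant there equals $-(q-2)(\alpha_l\gamma_l+\eta)<0$: thus $\bs{P^\pm}$ are saddle points of $E$ lying strictly above the energy level of the origin. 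Combined with~\eqref{derivoE}, this yields that for $l\neq 2^*$ the flow is gradient-like ($E$ strictly monotone off the equilibria, increasing for $l<2^*$ and decreasing for $l>2^*$), while for $l=2^*$ the system is Hamiltonian with first integral $E$.

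The technical heart of the proof is the control of large solutions, for which I would adapt the polar-coordinate estimate of~\cite{Fdie}. Because $K<0$, when $|x|$ is large the nonlinear term $-Kx|x|^{q-2}$ in $\dot y$ pushes trajectories \emph{away} from the origin rather than making them rotate: one shows that in the sectors where $x|x|^{q-2}$ dominates the angular velocity keeps a fixed sign and the modulus increases, so unbounded trajectories escape monotonically to infinity (blowing up) without spiralling, in contrast with the double spirals of the $K>0$ portrait. This, together with the monotonicity of $E$ and the absence of periodic orbits, pins down every separatrix: each branch of $M^u$, $M^s$ and of the (un)stable manifolds of $\bs{P^\pm}$ either escapes to infinity or connects to another equilibrium.

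It remains to assemble the three regimes. When $2_*(\eta)<l<I(\eta)$ and $l\neq 2^*$, the origin is the only rest point and a saddle; its one-dimensional manifolds $M^u$ and $M^s$ run off to infinity (with $E$ increasing along $M^u$ and increasing towards the origin along $M^s$ when $l<2^*$, and the reverse when $l>2^*$), giving the middle panel of Figure~\ref{negportrait}. When $l=2^*$ the level curves of $E$ furnish the Hamiltonian portrait directly. When $l<2_*(\eta)$ (origin an unstable node) or $l>I(\eta)$ (origin a stable node), the energy ordering $E(0,0)=0<E(\bs{P^\pm})$ together with the strict monotonicity of $E$ forces one branch of each saddle manifold of $\bs{P^\pm}$ to connect to the node, producing the heteroclinic connections shown in Figure~\ref{negportrait}, while the remaining separatrices escape to infinity; the bifurcation diagram of Figure~\ref{biffig} records the transitions at $l=2_*(\eta),2^*,I(\eta)$. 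I expect the main obstacle to be precisely the non-coercivity of $E$: ruling out spiralling at infinity and handling the possible finite-time blow-up, which replaces the clean bounded-level-set argument available when $K>0$ and forces the angular-velocity analysis \`a la~\cite{Fdie}.
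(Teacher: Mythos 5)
Your plan is sound in outline but follows a genuinely different route from the paper, and as written it has two concrete gaps. The paper's proof does not rely on a Morse-type analysis of $E$ nor on a polar-coordinate far-field estimate: its engine is Lemma~\ref{kmeno}, i.e.\ the sign of $\dot y+m\dot x$ on the lines $T(m)$, which makes suitable sectors invariant and yields at once that $M^u_+\subset T^+(\kappa(\eta))$ is a graph over the positive $x$-semi-axis crossing the $x$-axis at most once, that $M^s_+\subset T^-(n-2-\kappa(\eta))$, that non-connecting trajectories blow up in finite time, and that the heteroclinics with $\bs{P^\pm}$ exist and are graphs; energy monotonicity and the absence of periodic orbits play only an auxiliary role there. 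Your energy computations ($\nabla E=0$ exactly at the rest points, $E(\bs{P^\pm})=(\alpha_l\gamma_l+\eta)P_x^2\frac{q-2}{2q}>0$, Hessian determinant $-(q-2)(\alpha_l\gamma_l+\eta)<0$ at $\bs{P^\pm}$) are correct and give a clean alternative picture. However, be aware that the sector/graph facts are precisely what is later cited \emph{from this lemma} (in Lemma~\ref{stella} and in the proof of Theorem~\ref{main1}), so a proof that does not produce them leaves the downstream arguments without support.

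The two gaps. First, you never treat the borderline cases $l=2_*(\eta)$ and $l=I(\eta)$, which are part of the statement and of the bifurcation diagram in Figure~\ref{biffig}: there $\alpha_l\gamma_l+\eta=0$, so your Hessian dichotomy at the origin degenerates and the origin has a zero eigenvalue; the paper handles these via a center-manifold argument (the center manifold is asymptotically stable, resp.\ unstable, because $\dot y>0$ on $M^s\cap\{x>0\}$), and this analysis also feeds Lemma~\ref{asymp.criticalmeno}. Second, the key assembling step --- ``the energy ordering forces one branch of each saddle manifold of $\bs{P^\pm}$ to connect to the node'' --- is asserted, not proved: a priori \emph{both} branches of the stable manifold of $\bs{P^+}$ (say, for $l<2_*(\eta)$) could escape to infinity backward in time, since $E$ is unbounded below when $K<0$. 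What makes ``forces'' true is the following missing observation: on the lines $x=\pm P_x$ one has $E\ge E(\bs{P^+})$, so the connected component of the origin in $\{E<E(\bs{P^+})\}$ is a \emph{bounded} lens pinched at $\bs{P^\pm}$; this component is backward invariant when $l<2^*$ (forward invariant when $l>2^*$), and one of the two local branches of the stable (resp.\ unstable) manifold of $\bs{P^+}$ starts inside it, because the corresponding eigendirection lies strictly inside the sector $\{E<E(\bs{P^+})\}$ (this uses $\alpha_l+\gamma_l>0$, resp.\ $<0$). That branch is then trapped, hence bounded, and your no-periodic-orbit argument (positive, resp.\ negative, divergence $\alpha_l+\gamma_l$) identifies its $\alpha$-limit (resp.\ $\omega$-limit) as the unique rest point of lower energy, namely the origin. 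With this insertion and with the two critical values of $l$ added, your strategy does prove the lemma.
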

 \begin{proof}

$\bullet$ \textbf{Assume} $\bs{K<0}$ \textbf{and} $\bs{l \in [2_*(\eta), I(\eta)]}$ (i.e. $\alpha_l \gamma_l +\eta \le 0$).\\
The level sets of $E$ are unbounded   curves (hyperbola like), and the origin is the unique critical point. The origin is a saddle
if $2_*(\eta)<l< I(\eta)$. If $l=2_*(\eta)$ there is a $1$-dimensional unstable manifold $M^u$ and a $1$-dimensional center manifold,
say $M^s$, which is in fact asymptotically  stable: this fact can be easily obtained observing that $\dot y>0$ in $M^s \cap \{x>0 \}$.
Notice however that for $M^s$~\eqref{characbis} does not hold, and trajectories in $M^s$ behave polynomially.
Dually if $l=I(\eta)$ there is a $1$-dimensional stable manifold $M^s$ and a $1$-dimensional center manifold, say $M^u$,
 which is in fact asymptotically  unstable (for which however~\eqref{charac} does not hold, and we have a polynomial behavior).

In all the cases, by Lemma~\ref{kmeno}, $M^u$ is an unbounded curve,
 which crosses the $x$ axis at most once (if $\eta>0$, never if $\eta \le 0$), it is  in $y>0$ for $x$ large,
and   $M^u_+ \subset T^+(\kappa(\eta))$. It follows that $\phi_l(t,\Q)=(x_l(t,\Q),y_l(t,\Q))$ is such that
$\dot{x}_l(t,\Q)>0$ for any $t$, if $\Q\in M^u_+$: therefore $M^u_+$ is a graph on the $x>0=y$ semi-axis. $M^u_-$ is obtained by symmetry.
 Similarly
 $M^s$ is an unbounded curve, $M^s_+ \subset T^-(n-2-\kappa(\eta))$, and $M^s_-$ is obtained by symmetry.
Further $\phi(t,\Q)$ is defined just for $t \in (-\infty,T(\Q))$ and becomes unbounded as $t \to T(\Q)$  whenever $\Q \in M^u$;
it is defined just for $t \in (\tau(\Q),+\infty)$ and becomes unbounded as $t \to \tau(\Q)$  whenever $\Q \in M^s$, and
just for $t \in (\tau(\Q),T(\Q))$ and becomes unbounded as $t \to \tau(\Q)$ and as $t \to T(\Q)$ whenever $\Q \not\in (M^u\cup M^s)$,
where $\tau(\Q),T(\Q) \in \RR$.

$\bullet$ \textbf{Assume} $\bs{K<0}$ \textbf{and} $\bs{l \in (2,2_*(\eta)) \cup (I(\eta),+\infty))}$ (i.e. $\alpha_l \gamma_l +\eta>0$).\\
There are two critical points $\bs{P^+}$ and $\bs{P^-}$ with $E(\bs{P^+})=E(\bs{P^-})>0$. The origin is a node, unstable if $l \in (2,2_*(\eta))$ and stable if
$l>I(\eta)$, while $\bs{P^+}$ and $\bs{P^-}$ are saddle.

If $l \in (2,2_*(\eta))$ as in the previous case $M^u_+$ is an unbounded curve which crosses the $x$ axis at most once (if $\eta>0$, never if $\eta \le 0$), it is  in $y>0$ for $x$ large; further   $M^u_+ \subset T^+(\kappa(\eta))$ so $M^u_+$ is a graph on the $x>0=y$ semi-axis.
$M^u_-$ is obtained by symmetry. So, if $\Q\in M^u$ then $\phi(t,\Q)$ converges to the origin as $t \to-\infty$ and becomes
unbounded at some finite $t=T(\Q)$ (so it is defined just for $t<T(\Q)$). Further there is $\R$ such that
$\phi(t,\R) \to (0,0)$ as $t \to -\infty$, $\phi(t,\R) \to \bs{P^+}$ as $t \to +\infty$,
and  $\phi(t,\R) \in  T^-(n-2-\kappa(\eta))$  for any $t \in \RR$. In particular $\phi(t,\R)$ is a graph on $y=0$.
 By symmetry we also have  a heteroclinic connection between the origin and $\bs{P^-}$ which is again a graph  on $y=0$.

If $l>I(\eta)$ and $\Q\in M^s$ then $\phi(t,\Q)$ converges to the origin as $t \to +\infty$ and becomes
unbounded going backward in time at some finite $t=\tau(\Q)$ (so it is defined just for $t>\tau(\Q)$).  Further there is $\R$ such that
$\phi(t,\R) \to \bs{P^+}$ as $t \to -\infty$, $\phi(t,\R) \to (0,0)$ as $t \to +\infty$,
and  $\phi(t,\R) \in  T^+(\kappa(\eta))$  for any $t \in \RR$. In particular $\phi(t,\R)$ is a graph on $x>0=y$.
Again we also have  a heteroclinic connection between $\bs{P^-}$ and the origin which is again a graph  on $y=0$.
\end{proof}

 \begin{figure}[t]
\centerline{\epsfig{file=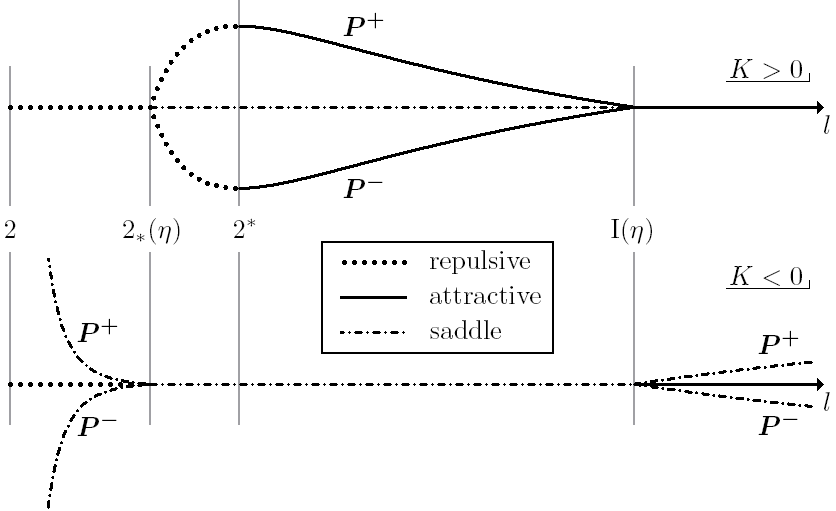, width = 8 cm}}
\caption{\small
The diagrams show how the origin bifurcates in two non-trivial equilibria at $l=2_*(\eta)$ and $l={\rm I}(\eta)$. In the case $K>0$ the non-trivial equilibria change their behaviour at $l=2^*$ (where they are centers).
}
\label{biffig}
\end{figure}

We   conclude the subsection with the analysis of the asymptotic behavior of the trajectories in the centre manifolds found in the critical cases $l=2_*(\eta)$ and $l=I(\eta)$.

\begin{lemma}\label{asymp.criticalmeno}
Assume $K<0$ and $l=2_*(\eta)$, let $\Q$ belong to the centre manifold $M^s$. Consider the trajectory $\phi_l(t,\Q)$ of~\eqref{sist} and the corresponding solution
$v(r)$ of~\eqref{rad.hardy}. Then $ v(r)r^{n-2-\kappa(\eta)} [\ln (r)]^{\frac{1}{(q-2)}}$ is bounded between two positive constants as $r \to +\infty$.

Assume $K<0$ and $l=I(\eta)$, let  $\bs{R}$ belong to the centre manifold $M^u$. Consider the trajectory $\phi_l(t,\bs{R})$ of~\eqref{sist} and the corresponding
solution $v(r)$ of~\eqref{rad.hardy}. Then $ v(r)r^{n-2-\kappa(\eta)} |\ln (r)|^{\frac{1}{(q-2)}}$ is bounded between two positive constants as $r \to 0$.
\end{lemma}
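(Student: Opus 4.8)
The plan is to mirror the proof of Lemma~\ref{asymp.critical}, the only novelty being the sign of $K$. The two critical values are characterised by $\alpha_l\gamma_l+\eta=0$: at $l=2_*(\eta)$ one has $\alpha_l=n-2-\kappa(\eta)$, $\gamma_l=-\kappa(\eta)$, while at $l=I(\eta)$ one has $\alpha_l=\kappa(\eta)$, $\gamma_l=\kappa(\eta)+2-n$. In both cases the vanishing of $\alpha_l\gamma_l+\eta$ collapses the reduced equation~\eqref{ODE} to~\eqref{ddotbis}, $\ddot X=A\dot X-KX|X|^{q-2}$, with $A=\alpha_l+\gamma_l=\pm\sqrt{(n-2)^2-4\eta}$, the sign being positive at $l=2_*(\eta)$ and negative at $l=I(\eta)$. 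As in Lemma~\ref{asymp.critical}, the change of variables $\mathcal X=X$, $\mathcal Y=Y-\gamma_l X$ turns~\eqref{sist} into the model system~\eqref{ss}, and the whole point is to show that the centre-manifold orbit descends to the origin polynomially, at the nonlinear rate.

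First I would treat $l=2_*(\eta)$, where $A>0$. By Lemma~\ref{pictureneg} the centre manifold $M^s$ is asymptotically stable, so on $M^{s,+}$ the orbit $(X(t),Y(t))$ tends to the origin as $t\to+\infty$, and in the variables $(\mathcal X,\mathcal Y)$ its central direction is $T(A)$. The decisive localisation is that $M^{s,+}\subset T^-(A)$ for large $t$: on $T(A)$ one has $\dot{\mathcal X}=0$ and $\frac{d}{dt}(A\mathcal X+\mathcal Y)=\dot{\mathcal Y}=-K\mathcal X^{q-1}>0$ (this is exactly where $K<0$ enters), so the flow crosses $T(A)$ only from $T^-(A)$ towards $T^+(A)$; were the orbit eventually in $T^+(A)$ we would have $\dot{\mathcal X}>0$ with $\mathcal X>0$, contradicting $\mathcal X\to0$. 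Consequently, for every $\varepsilon>0$ and $t$ large, $A\mathcal X<-\mathcal Y<(A+\varepsilon)\mathcal X$, the analogue of~\eqref{diseq}. Setting $Z=-\mathcal Y>0$ we obtain $\dot Z=K\mathcal X^{q-1}<0$ and, from the two-sided bound, $-M_2<Z^{1-q}\dot Z<-M_1<0$ for suitable constants; integrating (recall $2-q<0$) yields $Z(t)^{2-q}\sim(q-2)M\,t$, hence $\mathcal X(t)$ is squeezed between two multiples of $t^{-1/(q-2)}$. Since $\mathcal X=v(r)r^{\alpha_l}=v(r)r^{n-2-\kappa(\eta)}$ and $t=\ln r$, this is precisely the claim that $v(r)r^{n-2-\kappa(\eta)}[\ln r]^{1/(q-2)}$ is bounded between two positive constants as $r\to+\infty$.

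For $l=I(\eta)$ I would reduce to the previous case by reversing time. Now $A=2\kappa(\eta)-(n-2)<0$, and by Lemma~\ref{pictureneg} the centre manifold $M^u$ is asymptotically unstable, with $M^u_+\subset T^+(\kappa(\eta))$, so on $M^{u,+}$ the orbit tends to the origin as $t\to-\infty$, i.e.\ as $r\to0$. The substitution $s=-t$ carries~\eqref{ddotbis} into the same model equation with positive coefficient $\tilde A=-A=\sqrt{(n-2)^2-4\eta}$, and sends $t\to-\infty$ into $s\to+\infty$; the argument of the previous paragraph then applies verbatim and gives $\mathcal X\sim|t|^{-1/(q-2)}$. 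Translating back through $\mathcal X=v(r)r^{\alpha_l}$ with $\alpha_l=\kappa(\eta)$ and $r=e^t\to0$ shows that $v(r)r^{\kappa(\eta)}|\ln r|^{1/(q-2)}$ is bounded between two positive constants; this is the second assertion, the relevant exponent being $\alpha_l=\kappa(\eta)$, the value exchanged with $n-2-\kappa(\eta)$ under the reversal $r\mapsto1/r$ that swaps the two critical exponents.

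The step I expect to be delicate is the localisation, just as in Lemma~\ref{asymp.critical}. There, convergence to the origin came for free from the monotonicity of the energy $E$ along a generic orbit off $M^u$; here the orbit is pinned to a centre manifold, so I would instead invoke the qualitative picture of $M^s$ (resp.\ $M^u$) furnished by Lemma~\ref{pictureneg}---in particular the sign of $\dot y$ along the manifold and the inclusions $M^{s,+}\subset T^-(n-2-\kappa(\eta))$ and $M^{u,+}\subset T^+(\kappa(\eta))$---both to rule out the wrong side of $T(A)$ and to guarantee that the approach to the origin is polynomial rather than exponential, which is exactly what produces the logarithmic correction.
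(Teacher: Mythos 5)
Your proof is correct and follows essentially the same route as the paper: the paper's own proof also reduces to the model equation~\eqref{ddotbis} via $\mathcal X=X$, $\mathcal Y=Y-\gamma_l X$, uses the qualitative information of Lemma~\ref{pictureneg} on the centre manifold to get monotonicity (its claim that $\dot X<0$ for $t$ large is exactly your inclusion of the orbit in $T^-(A)$, since $\dot{\mathcal X}=A\mathcal X+\mathcal Y$), then repeats the integration argument of Lemma~\ref{asymp.critical}, and treats $l=I(\eta)$ by reversing time. Your write-up is in fact considerably more detailed than the paper's, which compresses all of this into a few lines.

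One discrepancy must be made explicit, and it is to your credit. For $l=I(\eta)$ your computation yields boundedness of $v(r)r^{\kappa(\eta)}|\ln r|^{1/(q-2)}$ as $r\to 0$, whereas the statement literally reads $v(r)r^{n-2-\kappa(\eta)}|\ln r|^{1/(q-2)}$. You are right and the statement, as printed, contains a slip: at $l=I(\eta)$ one has $\alpha_l=\kappa(\eta)$, the null eigenvalue is $\Lambda(l)=\alpha_l-\kappa(\eta)$ with eigenvector $(1,-\kappa(\eta))$, so the centre manifold is tangent to $T(\kappa(\eta))$ and the quantity that decays like $|t|^{-1/(q-2)}$ is $x_l(t)=v(r)r^{\kappa(\eta)}$. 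Indeed the reduced flow on the centre manifold is $\dot x\simeq (K/\lambda(l))\,x|x|^{q-2}$ with $K/\lambda(l)>0$, whence $x_l(t)\asymp |t|^{-1/(q-2)}$ as $t\to-\infty$, i.e. $v(r)\asymp r^{-\kappa(\eta)}|\ln r|^{-1/(q-2)}$; the paper's own recipe (``reversing $t$'') produces exactly this. The exponent $n-2-\kappa(\eta)$ is correct only in the $l=2_*(\eta)$ case, where $\alpha_l=n-2-\kappa(\eta)$; its appearance in the second assertion here (and likewise in the second part of Lemma~\ref{asymp.critical}) is a typo carried over from that case, which your final remark correctly identifies.
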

\begin{proof}
Assume $l=2_*(\eta)$; since $\dot{y}>0$ along $M^{s}_+$,
we see that $\phi_l(t,\Q)=(X(t),Y(t))$ converges to the origin (polynomially fast) as $t \to +\infty$. Therefore, reasoning as in the proof of Lemma~\ref{asymp.critical},
we see that $\dot{X}(t)<0$ for $t$ large. Hence, repeating the argument in the proof of Lemma~\ref{asymp.critical},
we end up with the estimate~\eqref{stimaperX} for $t$ large. So the estimate for $v(r)$ easily follows.

When   $l=I(\eta)$ since $\dot{x}>0$ along $M^{u}_+$,
we see that $\phi_l(t,\Q)=(X(t),Y(t))$ converges to the origin (polynomially fast)  as $t \to -\infty$. Then we conclude reasoning as above but reversing $t$.
\end{proof}

\section{Main theorems: eq~\eqref{rad.hardy} with $f$ of type~\ref{types}}\label{mainthm}

In this section we consider equation~\eqref{rad.hardy} with $f$ of type~\eqref{types}, and $l_1,l_2>2$ are the values defined by
\eqref{elle}. In the following statements we   present the results for solutions which are {\em positive near zero}. The counterpart for {\em negative near zero} solutions follows by symmetry.
We adopt the terminology introduced in Definition~\ref{defRFS}.

\begin{thm}\label{main1}
Let $f$ be of type~\eqref{types} with $K_1<0<K_2$, $2<l_1<I(\eta)$, $l_2>2^*$;
then there is a sequence  $D_k \nearrow \infty$ such that for any $k \in \mathbb{N}$, $u(r,D_{k})$ is a \sol Rkf.
 Moreover,
  $u(r,d)$ is a positive \sol R0s for any $0<d<D_0$,
and for any $k \ge 1$  there exists $\tilde D_k \in [D_{k-1},D_k)$ such that
$u(r,d)$ is a \sol Rks 
    whenever $\tilde D_{k }<d<D_{k}$ and $u(r,\tilde D_k)$ is a \sol R{k-1}f.
\end{thm}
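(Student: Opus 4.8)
The plan is to run the superposition scheme announced in the introduction. Applying the Fowler change of variables~\eqref{transf} to~\eqref{rad.hardy} turns the inner region $r\le 1$ into the autonomous system~\eqref{sist} with $l=l_1$ and $K=K_1<0$, and the outer region $r>1$ into~\eqref{sist} with $l=l_2$ and $K=K_2>0$. Since $r^{\alpha_{l_i}}=1$ at $r=1$, the phase point $(x,y)=(u(1),u'(1))$ is common to both systems at the matching time $t=0$; thus every solution of~\eqref{rad.hardy} is produced by following the $l_1$--flow for $t<0$ and, from the common point at $t=0$, the $l_2$--flow for $t>0$. By Lemma~\ref{manifold} (and Lemma~\ref{manifold2} when $l_1\le 2_*(\eta)$), the solutions that are positive and regular near $0$ are exactly those whose $t<0$ orbit lies on the branch $M^{u,+}$ of the unstable manifold of the origin for the $l_1$--system, parametrised by $d$.

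First I would record the two phase portraits and overlay them in the common $(x,y)$--plane. For the inner system $K_1<0$, Lemma~\ref{pictureneg} gives that $M^{u,+}$ is an unbounded curve issuing from the origin, contained in $T^+(\kappa(\eta))$, meeting the $x$--axis at most once and lying in $\{y>0\}$ for $x$ large; by the scaling~\eqref{scaling} the endpoint $Q(d):=(u(1,d),u'(1,d))$ runs monotonically along $M^{u,+}$ from the origin (as $d\to0^+$) out to infinity (as $d$ tends to the blow-up threshold $D^\infty$, finite since $K_1<0$). For the outer system $K_2>0$ and $l_2>2^*$, Lemma~\ref{picture} gives --- in the principal case $2^*<l_2<{\rm I}(\eta)$ --- that the origin is a saddle, the equilibria $\bs{P^\pm}$ are stable foci, and the stable manifold $M^s$ is an unbounded double spiral winding around the origin infinitely often. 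I would deduce that the winding is genuinely infinite from the strict monotonicity~\eqref{derivoE} of the energy~\eqref{defE} for $l_2>2^*$, together with a polar--coordinate rotation estimate for large orbits as in~\cite{Fdie}; the level sets of $E$ also confine $M^s$ and separate the basins of $\bs{P^+}$ and $\bs{P^-}$. The ranges $l_2\ge{\rm I}(\eta)$ (origin a global attractor) and the critical exponents $l_1=2_*(\eta)$, $l_2\in\{2^*,{\rm I}(\eta)\}$ are treated by the same overlay, invoking Lemma~\ref{asymp.critical} and Remark~\ref{asymp.criticalbis} for the asymptotics.

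The core is then a continuity (shooting) argument in $d$, reading the type of $u(\cdot,d)$ off the fate of the outer orbit $\phi_{l_2}(t,Q(d))$, $t\ge0$. The solution is a \fdsol-solution exactly when $Q(d)\in M^s$ (the orbit tends to the origin), and a \sdsol-solution otherwise, its orbit then converging to $\bs{P^+}$ or $\bs{P^-}$ (Remark~\ref{singandslow}); a nondegenerate zero of $u$ corresponds to a transversal crossing of the $y$--axis $\{x=0,\ y\ne0\}$. Because the inner regular part stays positive on $(0,1]$ (for $K_1<0$ positive regular solutions are increasing), all zeros occur for $r>1$ and their number equals the number of $y$--axis crossings of the outer orbit. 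As $d$ increases, $Q(d)$ sweeps $M^{u,+}$ and meets the spiral $M^s$ at a strictly increasing sequence of parameters; each such meeting is a \fdsol-value, while on the complementary arcs the $\omega$--limit is a focus. Controlling the winding number of the outer orbit --- which is finite, well defined, and nondecreasing in $d$, jumping by one as $Q(d)$ passes a loop of the spiral --- gives the number of zeros. Matching this bookkeeping with Definition~\ref{defRFS} yields the \fdsol-values $D_k$ with $u(r,D_k)$ a \sol Rkf; the auxiliary thresholds $\tilde D_k\in[D_{k-1},D_k)$ arise as the lower endpoint of the $d$--interval carrying exactly $k$ zeros with slow decay, this endpoint being itself a \fdsol-value with $k-1$ zeros (so $u(r,\tilde D_k)$ is a \sol R{k-1}f), while $u(r,d)$ is a \sol Rks on $(\tilde D_k,D_k)$ and a positive \sol R0s for $0<d<D_0$; the inclusive endpoint $\tilde D_k=D_{k-1}$ is permitted precisely because Theorem~\ref{main1} does not locate $\tilde D_k$ exactly (this is the content refined in the ``bis'' version).

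The main obstacle will be the zero--counting: turning the qualitative picture ``an outgoing arc meets an infinite spiral'' into a rigorous monotone integer--valued winding function of $d$. Concretely one must show that the rotation number of $\phi_{l_2}(\cdot,Q(d))$ is well defined, that $Q(d)$ meets each loop of $M^s$ exactly once and transversally (so that every zero is nondegenerate and zeros are neither created nor destroyed except at the predicted thresholds), and that the winding increases without bound as $d\to D^\infty$, producing the full sequence $D_k$. A secondary difficulty is the control of $Q(d)$ as $d\to D^\infty$, which rests on the non--continuability (Gronwall) estimates for $K_1<0$ already used in Lemma~\ref{manifold}, and the handling of the degenerate exponents via Lemma~\ref{asymp.critical} and Remark~\ref{asymp.criticalbis}.
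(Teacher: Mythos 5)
Your skeleton is the paper's own: Fowler variables matched at $r=1$ (system~\eqref{sist*}), the overlay of the inner unstable manifold $M^u_+$ (Lemma~\ref{pictureneg}) with the outer stable-manifold spiral $M^s$ (Lemma~\ref{picture}), fast decay exactly when the matching point lies on $M^s$, slow decay otherwise, and zeros counted by transversal $y$-axis crossings of the outer orbit (all zeros occurring for $r>1$ since $M^u_+\subset T^+(\kappa(\eta))\subset\{x>0\}$). The genuine gap is in your core counting mechanism. You reduce everything to showing that the winding number of the outer orbit is \emph{nondecreasing in $d$}, jumping by one per loop, and that ``$Q(d)$ meets each loop of $M^s$ exactly once and transversally.'' This is precisely the uniqueness property the paper states it \emph{cannot} prove under the hypotheses of Theorem~\ref{main1}: after Lemma~\ref{stella} the authors remark that $M^{s,j}\cap M^u_+=\{\bs{Q_j}\}$ is not available, and that it is obtained only under the additional assumption $l_1\le l_2\le {\rm I}(\eta)$ of Theorem~\ref{main1bis}, via the flow-transversality computation of Lemma~\ref{transversal} (which uses $K_1<0$ together with $\alpha_{l_1},\alpha_{l_2}\ge\kappa(\eta)$). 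Without that assumption, $M^u_+$ may exit and re-enter a region $\bar M^{s,j}$, the zero count need not be monotone in $d$, and ``the $d$-interval carrying exactly $k$ zeros with slow decay'' need not be an interval, so its ``lower endpoint'' is not well defined. In short, the step you flag as the main obstacle is not a technicality to be filled in: completing your plan would prove Theorem~\ref{main1bis} without its extra hypothesis, which is exactly what these methods do not give.

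The paper's proof is designed to avoid any such monotonicity. Lemma~\ref{stella} produces infinitely many intersections by a purely topological argument: the regions $\bar M^{s,j}$, bounded by spiral arcs of $M^s$ and segments of the $y$-axis, are nested, and since $M^u_+$ is an unbounded graph confined to $\{x>0\}$ it cannot cross the segments and must therefore cross every arc $M^{s,j}$. Then $D_k$ is defined from the \emph{first} intersection $\upsilon_k$ of $M^u_+$ with $M^{s,k}$, and $\tilde D_k$ from the \emph{last} intersection $\upsilon^*_k$ with $M^{s,k-1}$ before $\upsilon_k$. For $\upsilon\in(\upsilon^*_k,\upsilon_k)$ the matching point lies in $\bar M^{s,k}\setminus \bar M^{s,k-1}$; the outer orbit cannot cross the arcs of $M^s$ and the flow points inward across the $y$-axis segments, so it is trapped in $\bar M^{s,k}$, attracted to $\bs{P^+}$ or $\bs{P^-}$, and guided by the spiral across the $y$-axis exactly $k$ times. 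This first/last-intersection plus invariant-region bookkeeping yields exactly the (deliberately weaker) statement of Theorem~\ref{main1}, silent on $(D_{k-1},\tilde D_k)$; you should replace your winding-number monotonicity by it. A secondary point: since only $l_2>2^*$ is assumed, the case $l_2\ge {\rm I}(\eta)$ is allowed, where $\bs{P^\pm}$ do not exist; trapped orbits then converge to the origin along the weak direction and are still slow decay (Remark~\ref{singandslow}, Lemma~\ref{asymp.critical}), so your description ``the $\omega$-limit is a focus'' must be amended there.
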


If we add the assumption $l_1\le l_2\leq {\rm I}(\eta)$ we find $\tilde D_k=D_{k-1}$ thus giving more structure.

\begin{thm}\label{main1bis}
Assume that we are in the hypothesis of Theorem~\ref{main1}; assume further $l_1\leq l_2\leq {\rm I}(\eta)$.
Then  $u(r,d)$ is   a \sol Rks for any $D_{k-1}<d<D_k$ for any $k\geq 1$.
%
\end{thm}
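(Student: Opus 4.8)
The plan is to keep the sequence $\{D_k\}$ and the superposition picture from the proof of Theorem~\ref{main1}, and to show that the intermediate values $\tilde D_k$ produced there necessarily satisfy $\tilde D_k=D_{k-1}$. Recall the dictionary: a solution positive near $0$ corresponds, for $r\le1$, to a trajectory of~\eqref{sist} with $l=l_1$ lying on the branch $M^u_+$ of the (strongly) unstable manifold of the origin; by Lemma~\ref{manifold} its position at $t=0$ (that is, at $r=1$) is $\Q(d):=\Psi^u(d)=(u(1,d),u'(1,d))$, and as $d$ increases $\Q(d)$ runs monotonically along $M^u_+$ away from the origin, staying in $T^+(\kappa(\eta))\cap\{x>0\}$ by Lemma~\ref{pictureneg}. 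For $r>1$ the solution is the trajectory of~\eqref{sist} with $l=l_2$ issuing from $\Q(d)$; since $2^*<l_2\le{\rm I}(\eta)$, its $M^s$ is a separatrix spiralling out to infinity (Lemma~\ref{picture}), and the solution is a \fdsol-solution exactly when $\Q(d)\in M^s$, otherwise a \sdsol-solution (converging to $\bs{P^\pm}$ if $l_2<{\rm I}(\eta)$, or to the origin along the central direction if $l_2={\rm I}(\eta)$, cf.\ Remark~\ref{singandslow} and Lemma~\ref{asymp.critical}). As $M^u_+\subset\{x>0\}$, no zero occurs for $r\le1$; hence every zero of $u(\cdot,d)$ is a crossing of $\{x=0\}$ by the second trajectory, and $D_k$ is the parameter of the $k$-th meeting of $\Q(\cdot)$ with $M^s$.

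Let $n(d)$ denote the number of zeros of $u(\cdot,d)$. Since $\Q(d)$ has positive first coordinate, no zero can enter or leave at $r=1$; a zero can disappear only by escaping to $r=+\infty$, which happens exactly when the second trajectory tends to the origin, i.e.\ only at $d\in\{D_k\}$. Thus $n$ is constant on each interval $(D_{k-1},D_k)$ and may jump only at the $D_k$. The whole statement therefore reduces to showing that \emph{every such jump equals $+1$}: then $n\equiv k$ on $(D_{k-1},D_k)$, so each solution there is a \sol Rks, and there is no room for a second \fdsol-solution with $k-1$ zeros strictly between $D_{k-1}$ and $D_k$, forcing $\tilde D_k=D_{k-1}$.

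The sign of the jump at a meeting $\Q=(x,y)\in M^u_+\cap M^s$ is governed by the side of $M^s$ onto which $\Q(d)$ crosses, i.e.\ by the signed area $\omega:=F_1(\Q)\wedge F_2(\Q)$ (the $2\times2$ determinant), where $F_i$ is the field of~\eqref{sist} with $l=l_i$ (so $F_1$ is tangent to $M^u_+$ in the direction of increasing $d$, and $F_2$ to $M^s$). Using $\gamma_{l_i}=\alpha_{l_i}-(n-2)$ a direct computation gives
\begin{equation*}
\omega=(\alpha_{l_2}-\alpha_{l_1})\big(y^2+(n-2)xy+\eta x^2\big)-K_2\,x^{q_2-1}(\alpha_{l_1}x+y)+K_1\,x^{q_1-1}(\alpha_{l_2}x+y).
\end{equation*}
Here both hypotheses enter: $l_1\le l_2$ yields $\alpha_{l_2}-\alpha_{l_1}\le0$ (as $\alpha_l=2/(l-2)$ is decreasing), while $l_i\le{\rm I}(\eta)$ yields $\alpha_{l_i}\ge\kappa(\eta)$. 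On $M^u_+\subset T^+(\kappa(\eta))$ we have $y+\kappa(\eta)x>0$, hence also $y+(n-2-\kappa(\eta))x>0$ (since $n-2-\kappa(\eta)>\kappa(\eta)$) and $\alpha_{l_i}x+y\ge\kappa(\eta)x+y>0$; moreover $y^2+(n-2)xy+\eta x^2=(y+\kappa(\eta)x)(y+(n-2-\kappa(\eta))x)>0$. With $K_1<0<K_2$ all three summands are $\le0$, the middle one strictly, whence $\omega<0$ on the whole of $M^u_+$. In particular every meeting is transversal and has the same orientation.

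Since all crossings of $M^s$ by $\Q(\cdot)$ have the same orientation, all jumps of $n$ share the same sign; each jump has magnitude one, being a single zero escaping at $r=+\infty$ through a transversal crossing of $M^s$; and because $n\ge0$, $n\equiv0$ for $d<D_0$ and $u(\cdot,D_0)$ is a \sol R0f, the common sign must be $+$. Hence every jump is $+1$, giving $n\equiv k$ on $(D_{k-1},D_k)$ and $\tilde D_k=D_{k-1}$, which is the assertion. I expect the main obstacle to be the sign analysis of $\omega$ — in particular verifying, from Lemma~\ref{pictureneg}, that $M^u_+$ stays inside the cone where the quadratic form and both forms $\alpha_{l_i}x+y$ are positive — together with the rigorous local argument at the origin showing that a transversal, same-orientation crossing of the stable manifold changes the zero count by exactly $+1$; it is precisely here that $l_2\le{\rm I}(\eta)$, which keeps $M^s$ of $({\rm S}_{l_2})$ a genuinely spiralling separatrix attracting only along its stable direction, cannot be dispensed with.
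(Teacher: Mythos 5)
Your proposal is correct and takes essentially the same route as the paper: your wedge-product $\omega=F_1\wedge F_2$ on $T^+(\kappa(\eta))$ is, up to sign convention, exactly the scalar product $\left\langle \boldsymbol{\dot x_{l}^u} \,,\, J \boldsymbol{\dot x_{l}^s} \right\rangle$ computed in the paper's Lemma~\ref{transversal}, with the hypotheses $l_1\le l_2\le {\rm I}(\eta)$ (giving $\alpha_{l_2}\le\alpha_{l_1}$ and $\alpha_{l_i}\ge\kappa(\eta)$) and $K_1<0<K_2$ entering in precisely the same way. The only difference is cosmetic packaging of the consequence: the paper concludes that $M^u_+$, once it exits a spiral region $\bar{M}^{s,j}$, can never re-enter it (forcing $\bs{Q_j^*}=\bs{Q_j}$, i.e. $\tilde D_k=D_{k-1}$), whereas you track the zero-counting function $n(d)$ and argue that all its jumps have the same sign, hence equal $+1$ --- an equivalent reformulation of the same fixed-orientation crossing argument.
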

We obtain the following {\em dual} result, too.

\begin{thm}\label{main2}
Let $f$ be of type~\eqref{types} with
 $K_2<0<K_1$, $2<l_1<2^*$, $l_2>2_*(\eta)$; then there is a sequence $L_{k} \nearrow +\infty$ such that for any $k\in\mathbb N$, $v(r,L_{k})$
  is a \sol Rkf.
 Moreover, $v(r,L)$ is   a positive \sol S0f for any $0<L<L_0$,
and for any $k \ge 1$  there exists $\tilde L_{k-1} \in [L_{k-1},L_{k})$ such that
$v(r,L)$ is a \sol Skf
    whenever $\tilde L_{k-1 }<L<L_{k}$ and $u(r,\tilde L_{k-1})$ is a \sol R{k-1}f.

Consequently there is a sequence $D_k\nearrow \infty$ such that $u(r,D_k)$ is a \sol Rkf for any $k\geq 0$.
\end{thm}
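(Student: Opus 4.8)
The strategy is to superimpose the two autonomous phase portraits studied in Sections~\ref{mag0} and~\ref{min0} and to glue them along the interface $r=1$. The point that makes the gluing transparent is that the two Fowler changes of variable~\eqref{transf}, although built with the different exponents $\alpha_{l_1}$ and $\alpha_{l_2}$, agree at $r=1$, since $1^{\alpha_{l_1}}=1^{\alpha_{l_2}}=1$; hence any solution of~\eqref{rad.hardy} yields, in one and the same $(x,y)$--plane, a trajectory that obeys $S_{l_1}$ (with $K_1>0$) for $t<0$ and $S_{l_2}$ (with $K_2<0$) for $t>0$, with no jump at $t=0$. Since a zero of $u$ is the same as a point where $x=0$, and there $\dot x=y$, a nondegenerate zero is a transversal crossing of the $y$--axis (one with $y\neq0$); counting nondegenerate zeros of $u$ thus reduces to counting such crossings along the whole glued trajectory.

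First I would describe the two relevant curves. By Lemma~\ref{manifold} (and Lemma~\ref{manifold2} in the degenerate--node ranges) the \Rsol--solutions $u(r,d)$ are exactly those whose trajectory lies on the inner unstable manifold $M^u_1$ of $S_{l_1}$ for $t\le0$; as $K_1>0$ gives $D^\infty_1=+\infty$, the map $d\mapsto Q(d):=(u(1,d),u'(1,d))$ parametrizes the whole branch $M^{u,+}_1$, which by Lemma~\ref{picture} (case $2<l_1<2^*$) is an unbounded spiral issuing from the origin and winding around it infinitely many times as $d\to+\infty$. Dually, the \fdsol--solutions $v(r,L)$ are those whose trajectory lies on the outer stable manifold $M^s_2$ of $S_{l_2}$ for $t\ge0$; as $K_2<0$ gives $L^\infty_2<\infty$ (Lemma~\ref{manifold}), the map $L\mapsto P(L):=(v(1,L),v'(1,L))$ parametrizes $M^s_2$, which by Lemma~\ref{pictureneg} (case $2_*(\eta)<l_2<{\rm I}(\eta)$) consists of two unbounded, non--oscillating branches, $M^{s,+}_2\subset\{x>0\}$ and $M^{s,-}_2\subset\{x<0\}$, each running monotonically from the origin out to infinity. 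The key dichotomy is then: a global fast--decay solution is an \sol R{}f precisely when its interface point lies on $M^u_1$, and is an \sol S{}f otherwise, its backward trajectory then tending to the singular equilibrium $\bs{P^+}$ of $S_{l_1}$ when $2_*(\eta)<l_1<2^*$, or to the origin along the weak direction $v_{\lambda(l_1)}$ when $2<l_1\le2_*(\eta)$ (Remark~\ref{singandslow}). In particular the \sol R{}f solutions are exactly the intersection points $M^u_1\cap M^s_2$, each carrying an inner label $d=D_k$ and an outer label $L=L_k$.

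Next I would carry out the counting. Along either branch of $M^s_2$ the trajectory keeps a fixed sign of $x$, so no zero is produced in the outer region; all zeros come from the inner spiral. As $d$ increases the spiral $M^{u,+}_1$ sweeps out, crossing the $y$--axis after each half--turn and meeting, alternately, $M^{s,+}_2$ and $M^{s,-}_2$; the arc of $M^{u,+}_1$ from the origin up to its $k$--th intersection with $M^s_2$ has crossed $\{x=0\}$ exactly $k$ times, so that the corresponding solution $u(r,D_k)=v(r,L_k)$ is an \sol Rkf. For interface points $P(L)$ lying on $M^s_2$ before the first intersection, $P(L)$ stays near the origin with a fixed sign of $x$, giving the positive \sol S0f announced for the smallest values of $|L|$; for $P(L)$ between two consecutive intersections one gets a singular fast--decay solution whose number of zeros is the winding number of the inner arc joining $\bs{P^+}$ (or the origin) to $P(L)$. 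The needed monotonicity of this winding number, together with the transversality at $\{x=0\}$ that guarantees nondegeneracy, I would extract from a rotation/angular--velocity estimate in polar coordinates in the spirit of~\cite{Fdie}, using the energy~\eqref{defE}, which is strictly monotone by~\eqref{derivoE} (increasing for $l_1<2^*$) and excludes periodic orbits via Poincar\'e--Bendixson, to locate the manifolds and rule out spurious recurrence.

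The main obstacle is the fine ordering of the intersections, i.e. the appearance of the thresholds $\tilde L_{k-1}$. Because $M^{u,+}_1$ keeps turning while $M^s_2$ runs off to infinity, the intersections need not occur in the same order with respect to the outer parameter $L$ and with respect to the inner parameter $d$: the spiral may meet $M^s_2$ at a point with $k$ interior zeros and, after a further turn (hence a larger $d$), meet it again at a smaller value of $|L|$. Consequently, as $L$ runs through $(L_{k-1},L_k)$ the number of zeros of the singular solution $v(r,L)$ jumps at some $\tilde L_{k-1}\in[L_{k-1},L_k)$, where one meets an extra regular solution of type \sol R{k-1}f, while for $\tilde L_{k-1}<L<L_k$ the solution is an \sol Skf. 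Quantifying this ordering is exactly the delicate step; it is what the extra hypothesis $l_1\le l_2\le{\rm I}(\eta)$ of Theorem~\ref{main1bis} is designed to control, forcing $\tilde L_{k-1}=L_{k-1}$ and the clean alternation. A secondary technical point is continuability in the absorption region: one must check that the relevant fast--decay trajectories reach $r=1$ from outside (which bounds the admissible $L$ by $L^\infty_2$) and that, once inside, the singular ones stay in the basin of $\bs{P^+}$ so as to reach $r=0$ --- automatic when $2<l_1\le2_*(\eta)$, where the origin is a global repeller. Reading the intersection points from the inner side finally yields the sequence $D_k\nearrow+\infty$ with $u(r,D_k)$ an \sol Rkf; the divergence $D_k\to\infty$, in contrast with the boundedness of the $L_k$, reflects the asymmetry $D^\infty_1=+\infty$ versus $L^\infty_2<\infty$ between the inner production region and the outer absorption region.
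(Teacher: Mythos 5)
Your proposal is correct and is essentially the paper's own argument: the paper proves Theorem~\ref{main2} precisely by superimposing the two phase portraits, noting that now the unstable manifold of $({\rm S}_{l_1})$ is the double spiral while the stable manifold of $({\rm S}_{l_2})$ is the unbounded non-oscillating curve, and then repeating the intersection and zero-counting argument of Theorems~\ref{main1} and~\ref{main1bis} with time reversed (the paper also observes that Kelvin inversion gives the result at once). Incidentally, your closing remark that the $L_k$ stay bounded (accumulating at $L^{\infty}<\infty$, as forced by Lemma~\ref{manifold} since $K_2<0$) is the correct reading: the ``$L_k\nearrow+\infty$'' in the statement is a slip, as confirmed by Corollary~\ref{cor2}, which asserts $L_k\nearrow L_\infty<\infty$.
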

With an additional assumption on $l_i$, we obtain a better comprehension of the  structure as in Theorem~\ref{main1bis}.

\begin{thm}\label{main2bis}
Assume that we are in the hypothesis of Theorem~\ref{main2}; assume further $l_2 \ge l_1 \geq 2_*(\eta)$.
Then  $v(r,L)$ is   a \sol Skf for any $L_{k-1}<L<L_{k}$.
\end{thm}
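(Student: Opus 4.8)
The plan is to upgrade Theorem~\ref{main2} by proving that the stable branch of the outer problem meets each turn of the unstable spiral of the inner problem \emph{exactly once}; this single‑crossing property is precisely what collapses the interval $[L_{k-1},\tilde L_{k-1}]$ to a point. First I would recall the gluing picture behind Theorem~\ref{main2}. For $r\le 1$ equation~\eqref{rad.hardy} is governed by~\eqref{sist} with $l=l_1$, $K=K_1>0$; since $2_*(\eta)\le l_1<2^*$, Lemma~\ref{picture} shows the positive branch $M^{u,+}$ of the unstable manifold is the clockwise unbounded spiral, and evaluating the corresponding regular solutions at $r=1$ (where $r^{\alpha_l}=1$, so the two Fowler charts agree) gives a point $A(d)$ sweeping out $M^{u,+}$ as $d$ grows (Lemma~\ref{manifold}). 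For $r>1$ the system is~\eqref{sist} with $l=l_2$, $K=K_2<0$; fast decay corresponds to the stable branch $M^{s,+}$, which by Lemma~\ref{pictureneg} (with Lemma~\ref{kmeno}) is a simple unbounded arc inside $T^-(n-2-\kappa(\eta))\subset\{x>0\}$, and its value at $r=1$, say $B(L)$, traces $M^{s,+}$ outward from the origin as $L$ increases.

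A solution $v(r,L)$ is a \sol R{}f exactly when $B(L)\in M^{u,+}$ (the values $L_k$) and a \sol S{}f otherwise; since $M^{s,+}\subset\{x>0\}$ carries no crossing of $\{x=0\}$, every nondegenerate zero is produced in $r\le 1$, namely by the winding of the backward trajectory of~\eqref{sist} (with $l=l_1$) issued from $B(L)$ toward $\bs{P^{\pm}}$. The only mechanism by which Theorem~\ref{main2} can give $\tilde L_{k-1}>L_{k-1}$ is that $M^{s,+}$ meets one turn of $M^{u,+}$ twice, at $L_{k-1}$ and at $\tilde L_{k-1}$ (both carrying $k-1$ zeros); so it suffices to forbid such double crossings.

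The heart of the argument is a fixed‑orientation estimate at the crossings. Any intersection $B(L)=A(d)$ lies on $M^{s,+}\subset T^-(n-2-\kappa(\eta))$, so at such a point $(x_0,y_0)$ one has $x_0>0$ and $m:=-y_0/x_0>n-2-\kappa(\eta)$. Writing $F_1,F_2$ for the right‑hand sides of~\eqref{sist} with $(l,K)=(l_1,K_1)$ and $(l_2,K_2)$, a direct computation gives
\[
\det(F_1,F_2)=(\alpha_{l_2}-\alpha_{l_1})\,x_0^{2}\big(m^{2}-(n-2)m+\eta\big)-K_2\,x_0^{q_2-1}(\alpha_{l_1}x_0+y_0)+K_1\,x_0^{q_1-1}(\alpha_{l_2}x_0+y_0).
\]
Now $m^{2}-(n-2)m+\eta=(m-\kappa(\eta))\big(m-(n-2-\kappa(\eta))\big)>0$, and $\alpha_{l_2}\le\alpha_{l_1}$ because $l_2\ge l_1$, so the first term is $\le 0$; moreover $l_1,l_2\ge 2_*(\eta)$ forces $\alpha_{l_i}\le n-2-\kappa(\eta)<m$, whence both factors $\alpha_{l_i}x_0+y_0=x_0(\alpha_{l_i}-m)$ are negative, and together with $K_1>0>K_2$ this makes the last two terms strictly negative. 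Hence $\det(F_1,F_2)<0$ at \emph{every} crossing. This is exactly where the hypothesis $l_2\ge l_1\ge 2_*(\eta)$ is used.

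Finally I would turn this pointwise sign into the geometric statement. As $M^{u,+}$ and $M^{s,+}$ are single flow arcs of~\eqref{sist} for $l_1$ and $l_2$, and every intersection is transversal with the same orientation $\det(F_1,F_2)<0$, a Jordan‑arc argument excludes two crossings on one turn: returning to a given turn would force a crossing of the opposite orientation. Thus each turn of the spiral is met once, the crossing values $L_0<L_1<\cdots$ are simple, and by the ordering already furnished by Theorem~\ref{main2} the zero count jumps by exactly one at each $L_k$; consequently the gap is empty, $\tilde L_{k-1}=L_{k-1}$, and $v(r,L)$ is a \sol Skf for all $L\in(L_{k-1},L_k)$. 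I expect the main difficulty to be precisely this last passage — deducing ``one crossing per turn'' globally from the local sign — for which one must also confirm that $M^{s,+}$ remains in $T^-(n-2-\kappa(\eta))$ throughout, including the subrange $l_2>{\rm I}(\eta)$, where this requires a short separate check via Lemma~\ref{kmeno}.
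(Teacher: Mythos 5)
Your proposal is correct and is essentially the paper's own argument: the paper proves Theorem~\ref{main2bis} by repeating the proofs of Theorems~\ref{main1} and~\ref{main1bis} with time reversed, and the crux there is Lemma~\ref{transversal}, whose mirrored analogue you derive — fixed-sign transversality of the two vector fields at every point of $T^-(n-2-\kappa(\eta))$, where the outer branch $M^{s,+}$ (and hence every crossing) lies, with the hypotheses $l_2\ge l_1\ge 2_*(\eta)$ entering exactly as $l_1\le l_2\le I(\eta)$ did in~\eqref{toout}. Only minor slips remain, which do not affect the argument: the crossings sweep both branches of the inner spiral $M^u$, not just $M^{u,+}$ (the values $L_k$ alternate between them, as in Lemma~\ref{stella}), and when $l_1=2_*(\eta)$ the backward trajectories off $M^u$ converge to the origin with logarithmic corrections (Lemma~\ref{asymp.critical}) rather than to $\bs{P^\pm}$.
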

\begin{remark}
Notice that the asymptotic behaviour of the \sdsol -solutions described in  Theorem~\ref{main1}
     changes when $l_2$ passes through the critical value~$I(\eta)$, cf. Remark~\ref{singandslow}.
     Similarly the asymptotic behaviour of the \Ssol -solutions described in  Theorem~\ref{main1}
     changes when $l_1$ passes through the critical value~$2_*(\eta)$, cf. again Remark~\ref{singandslow}.
\end{remark}

\subsection{Proof of the main results.}
To prove
Theorems~\ref{main1},~\ref{main1bis},~\ref{main2} and~\ref{main2bis} we need to
overlap the manifold $M^u$ obtained for $l=l_1$ with the manifold $M^s$ obtained for $l=l_2$.
Therefore, following~\cite{Fprep} we introduce the new variables
$$
x_{l_*}(t)=
\begin{cases}
u(\eu^t) \eu^{\alpha_{l_1}t} & t  \le 0 \\
u(\eu^t) \eu^{\alpha_{l_2}t}& t \geq 0 \,,
\end{cases} \quad  y_{l_*}(t)=
\begin{cases}
u'(\eu^t) \eu^{(\alpha_{l_1}+1)t} & t  \le 0 \\
u'(\eu^t) \eu^{(\alpha_{l_2}+1)t}& t \geq 0 \,,
\end{cases}
$$
so that~\eqref{rad.hardy} is changed into
\begin{equation}\label{sist*}
\left( \begin{array}{c}
\dot{x}_{l_*} \\
\dot{y}_{l_*}  \end{array}\right) = \left( \begin{array}{cc} \alpha_{l_*} &
1
\\ -\eta & \gamma_{l_*}
\end{array} \right)
\left( \begin{array}{c} x_{l_*} \\ y_{l_*}  \end{array}\right) +\left(
\begin{array}{c} 0 \\-
g_{l_*}(x,t)\end{array}\right)
\end{equation}
where
$$
g_{l_*}(x,t)=
\begin{cases}
K_1 x|x|^{q_1-2} & t \le 0\\
K_2 x|x|^{q_2-2} & t \ge 0
\end{cases}\,, \qquad
\alpha_{l_*}(t)=
\begin{cases}
\alpha_{l_1} & t \le 0\\
\alpha_{l_2} & t \ge 0 \,, \qquad
\end{cases}
$$
and $\gamma_{l_*}(t)=\alpha_{l_*}(t)-n+2$.
Hence, at the time $\tau_0=0$ corresponding to the radius $r_0=1$, we switch from an autonomous system (${\rm S}_{l_1}$),
 $g_{l_1}(x)=K_1 x|x|^{q_1-2}$ to another autonomous system (${\rm S}_{l_2}$), and $g_{l_2}(x)=K
 _2 x|x|^{q_2-2}$ where $K_1K_2<0$.
 The existence of \sol R{}f of equation~\eqref{rad.hardy} is given by the existence of homoclinic orbits which can be found in correspondence of intersections between the unstable manifold of system (${\rm S}_{l_1}$) and the stable manifold of system (${\rm S}_{l_2}$), see Figure~\ref{overlap}.

 \begin{figure}[t]
\centerline{\epsfig{file=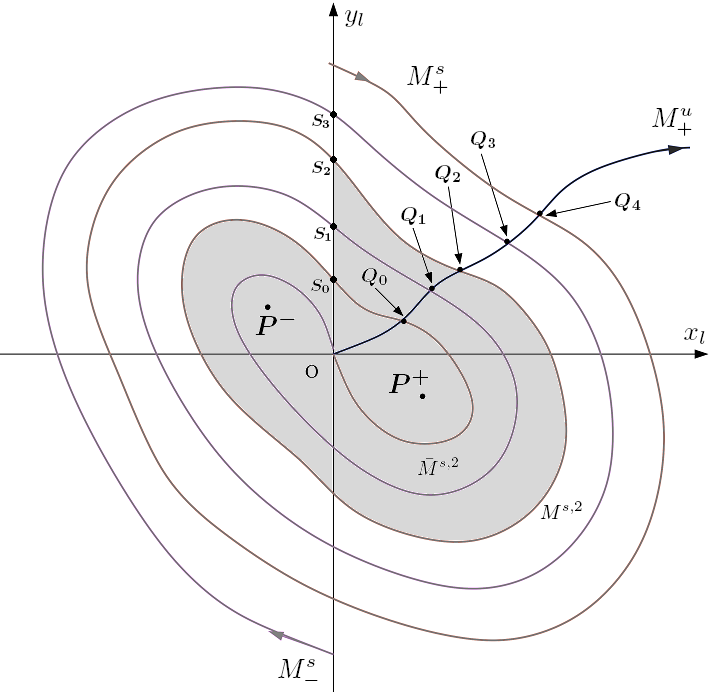, width = 8 cm}}
\caption{\small
Overlapping the phase portraits of  (${\rm S}_{l_1}$) and (${\rm S}_{l_2}$) we can find an infinite number of intersections respectively between the unstable manifold $M^u_+$ of (${\rm S}_{l_1}$) and  stable manifolds $M^s_\pm$ of (${\rm S}_{l_2}$). The points $\bs {Q_j}$ correspond to homoclinic orbits of~\eqref{sist*}, and so to \sol R{}f  of equation~\eqref{rad.hardy} under the assumptions of Theorem~\ref{main1}. For illustrative purpose, we have signed also the points $\bs{S_j}$ and we have coloured the region $\bar M^{s,2}$: its boundary consists of the branch $M^{s,2}\subset M^s$ and the segment between $\bs{S_0}$ and $\bs{S_2}$.
}
\label{overlap}
\end{figure}

\medbreak

Let us denote by $\phi^*(t,\Q)$, $\phi^u(t,\Q)$,  $\phi^s(t,\Q)$ respectively  the solutions of systems
(\ref{sist*}), (${\rm S}_{l_1}$), (${\rm S}_{l_2}$).
\begin{lemma}\label{stella}
Under the assumption of Theorem~\ref{main1bis}, we find an infinite number of intersections,
 say $\bs{Q_j}$, $j=0,1, \ldots$, between the unstable manifold $M^u$ of (${\rm S}_{l_1}$) and the stable manifold $M^s$ of (${\rm S}_{l_2}$). Further $\bs{Q_j}\in M^s_+$
if $j$ is even and $\bs{Q_j}\in M^s_-$
if $j$ is odd; moreover $\phi^*(t, \bs{Q_j})=\phi^s(t, \bs{Q_j})$ performs an angle $ -(j+1) \pi <\theta_j < -j \pi$
for $t \ge 0$.
\end{lemma}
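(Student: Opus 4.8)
The plan is to exploit the phase portraits established in Lemmas~\ref{picture} and~\ref{pictureneg}, overlapping the strongly unstable manifold $M^u$ of $({\rm S}_{l_1})$ (with $K_1<0$) with the stable manifold $M^s$ of $({\rm S}_{l_2})$ (with $K_2>0$, $l_2>2^*$). First I would recall from Lemma~\ref{pictureneg} that, since $K_1<0$ and $2<l_1<I(\eta)$, the branch $M^u_+$ of $({\rm S}_{l_1})$ is an unbounded curve contained in $T^+(\kappa(\eta))$ (crossing the $x$-axis at most once), hence a monotone graph over the positive $x$-semiaxis tending to infinity with $y>0$ for $x$ large. On the other hand, from Lemma~\ref{picture}, for $K_2>0$ and $l_2>2^*$ the stable manifold $M^s$ of $({\rm S}_{l_2})$ is an unbounded double spiral rotating counter-clockwise around the origin. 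The idea is that the single unbounded arc $M^u_+$ must cross this spiral infinitely many times, and each crossing is a point $\bs{Q_j}$ giving a homoclinic orbit of~\eqref{sist*}, hence a \sol R{}f of~\eqref{rad.hardy}.

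The key quantitative device is the angular coordinate. I would introduce polar coordinates $(\rho,\theta)$ centered at the origin and track the angle $\theta(t)$ along $\phi^s(t,\bs{Q})$ for the spiral $M^s$. Because $M^s$ spirals with unbounded winding, following it backward in time (toward larger $\rho$, i.e.\ away from the origin) the angle decreases without bound, sweeping through every multiple of $\pi$. The crossing points with the fixed ray structure of $M^u_+$ (which lives in $T^+(\kappa(\eta))$, a fixed angular sector near the positive $x$-axis) therefore occur once per half-turn: I would label them $\bs{Q_0},\bs{Q_1},\dots$ in order of decreasing angle, so that $\bs{Q_j}$ lies on $M^s_+$ when $j$ is even and on $M^s_-$ when $j$ is odd, and the arc of $M^s$ from the origin out to $\bs{Q_j}$ subtends total angle strictly between $-(j+1)\pi$ and $-j\pi$. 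Since $\phi^*(t,\bs{Q_j})$ coincides with $\phi^s(t,\bs{Q_j})$ for $t\ge 0$ (both systems agree for $t\ge0$ and $\bs{Q_j}\in M^s$ of $({\rm S}_{l_2})$), the stated angle count for $t\ge0$ follows directly from this labelling.

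To make the infinitude of intersections rigorous I would argue that $M^u_+$, being unbounded and eventually in $\{y>0,\ x\ \text{large}\}$, must re-enter the angular sectors swept by successive loops of the spiral $M^s$: monotonicity of $M^u_+$ as a graph guarantees it escapes to infinity along a definite asymptotic direction, while the spiral $M^s$ accumulates on every direction through the origin infinitely often as $\rho\to\infty$. A continuity/intermediate-value argument on the angular difference between the two curves, combined with the fact that neither curve is periodic (Poincar\'e--Bendixson, valid since $l_2\ne 2^*$), then yields a discrete, ordered, infinite family of transversal crossings. The additional hypothesis $l_1\le l_2\le I(\eta)$ of Theorem~\ref{main1bis} is what guarantees the clean alternation and ordering (it controls the relative positions of the tangent directions $T^u,T^s$ for the two systems), so I would invoke the isocline/tangency observations recorded after Lemma~\ref{manifold2} to pin down that $M^u_+$ meets the successive branches $M^s_\pm$ in exactly the stated order.

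The main obstacle I anticipate is not the existence of infinitely many intersections, which follows softly from unboundedness plus infinite winding, but rather the \emph{exact} angular bookkeeping: proving that between consecutive crossings the angle advances by exactly one factor of $\pi$ (no crossings are skipped and none are double-counted), and that the parity of $j$ matches the branch $M^s_\pm$. This requires controlling the monotonicity of $\theta(t)$ along $M^s$ near the crossing ray and ruling out tangential or spurious intersections; I expect to handle it by showing the angular velocity $\dot\theta$ along the spiral has a definite sign on each half-turn (using the sign of $\dot y + m\dot x$ on the relevant rays, as computed in Lemma~\ref{kmeno}) and that $M^u_+$ crosses each ray $T(\kappa(\eta))$-sector transversally.
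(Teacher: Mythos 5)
Your proposal follows essentially the paper's route: overlap the two portraits, use that $M^u_+$ of $({\rm S}_{l_1})$ is an unbounded graph confined to $T^+(\kappa(\eta))$ (Lemma~\ref{pictureneg}) while $M^s$ of $({\rm S}_{l_2})$ is an infinitely winding double spiral (Lemma~\ref{picture}), and read off the alternation and the angle bounds from where the intersections sit on the spiral. Two points of comparison. First, where you appeal to an ``intermediate-value argument on the angular difference between the two curves'' (which is ill-defined without a common parametrization), the paper uses a cleaner device: it cuts the spiral at its successive intersections $\bs{S_j}$ with the positive $y$-axis, forms the nested compact regions $\bar{M}^{s,j}$ bounded by the spiral arcs $M^{s,j}$ and segments of the $y$-axis, and notes that $M^u_+$, being connected, unbounded and contained in $\{x>0\}$, must leave each $\bar{M}^{s,j}$ and can only do so through the arc $M^{s,j}$. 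This also dissolves what you flag as the ``main obstacle'': the lemma does not assert that crossings are transversal, unique per branch, or exhaustively counted (the paper explicitly concedes it cannot prove $M^{s,j}\cap M^u_+=\{\bs{Q_j}\}$ at this stage); the hypothesis $l_1\le l_2\le I(\eta)$ is not what produces the alternation here, but is used only later, in Lemma~\ref{transversal}, to prevent $M^u_+$ from re-entering $\bar{M}^{s,j}$ --- and your proposed tool for that bookkeeping, Lemma~\ref{kmeno}, is in any case the wrong one, since it concerns the $K<0$ system, not the spiral of $({\rm S}_{l_2})$.

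Second, you have the orientation of the spiral inverted. Since for $K_2>0$ the flow turns clockwise at large radius, the angle along $M^s$ \emph{increases} going outward (backward in time) --- this is exactly the paper's ``rotating from the origin counter-clockwise'' --- so the sweep of $\phi^s(t,\bs{Q_j})$ for $t\ge0$ (inward, clockwise) is negative. As written, your convention (angle decreasing outward) would make the forward-time sweep positive, i.e.\ $\theta_j\in(j\pi,(j+1)\pi)$, contradicting the statement you are proving; the slip is cosmetic and does not affect the structure of the argument, but it must be corrected for the angle count to come out with the stated sign.
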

\begin{proof}
Recall that $M^s_+$ and $M^s_-$ are two spirals rotating  counter-clockwise around the origin, crossing the coordinate axis transversally infinitely many times,
 see Section~\ref{mag0}.
Let $\bs{S_0}$, $\bs{S_{2j}}$ be the first and the $(j+1)^{\rm{th}}$ intersection
of $M^s_+$ with the $y$ positive semi-axis,
 and let $\bs{S_1}$, $\bs{S_{2k+1}}$ be the first and the $(k+1)^{\rm{th}}$ intersection
of $M^s_-$ with the $y$ positive semi-axis (see Figure~\ref{overlap}). Denote respectively by
$M^{s,0}$, $M^{s,2k}$  the branches of $M^s_+$ respectively between the origin and
$\bs{S_0}$, and between $\bs{S_{2k-2}}$ and $\bs{S_{2k}}$. Analogously denote by
$M^{s,1}$, $M^{s,2k+1}$  the branches of $M^s_-$ respectively between the origin and
$\bs{S_1}$, and between $\bs{S_{2k-1}}$ and $\bs{S_{2k+1}}$.
Denote by $\bar{M}^{s,0}$ the closed bounded set enclosed by
$M^{s,0}$ and the segment between the origin and $\bs{S_0}$, by $\bar{M}^{s,1}$ the closed bounded set enclosed by
$M^{s,1}$ and the segment between the origin and $\bs{S_1}$, by $\bar{M}^{s,j}$ the closed bounded set enclosed by
$M^{s,j}$ and the segment between $\bs{S_j}$ and $\bs{S_{j-2}}$ for $j \ge 2$, see Figure~\ref{overlap}.

Observe further that  $M^u_+$ is contained in $T^+(\kappa(\eta))$ and it is a graph on the $x$ positive semi-axis,
see Lemma~\ref{pictureneg}.
Let $\gamma^{u,\pm}:[0,+\infty)\to\RR^2$, $\gamma^{s,\pm}:[0,+\infty)\to\RR^2$ be the arc length parametrizations
 respectively  of $M^{u}_\pm$, $M^{s}_\pm$, with $\gamma^{u,\pm}(0)=\gamma^{s,\pm}(0)=(0,0)$; we introduce  polar coordinates as follows
$$\begin{array}{c}
    \gamma^{u,\pm}(\upsilon)=\varrho^{u,\pm}(\upsilon) (\cos \theta^{u,\pm}(\upsilon), \sin\theta^{u,\pm}(\upsilon))\,, \\
\gamma^{s,\pm}(\sigma)=\varrho^{s,\pm}(\sigma) (\cos \theta^{s,\pm}(\sigma), \sin\theta^{s,\pm}(\sigma))\,.
  \end{array}
$$
with  $\theta^{u,+}(0)= -\arctan(\kappa(\eta))$,   $\theta^{s,+}(0)= -\arctan(n-2-\kappa(\eta))$,
$\theta^{u,-}(0)= \pi-\arctan(\kappa(\eta))$   $\theta^{s,-}(0)= \pi-\arctan(n-2-\kappa(\eta))$.
We have $\theta^{u,+}(\upsilon)\in(-\pi/2,\pi/2)$, $\theta^{u,-}(\upsilon)\in(\pi/2,3\pi/2)$ for every $\upsilon$; moreover $\varrho^{u,\pm}$, $\theta^{s,\pm}$ and $\varrho^{s,\pm}$  diverge to infinity as $\upsilon,\sigma\to\infty$.
From a simple reasoning, it follows that for any $j \in \mathbb{N}$ there is  $u_j$ such that $\gamma^{u,+}(\upsilon_j)=\bs{Q_j} \in M^{s,j}$ and
$\gamma^{u,+}(\upsilon) \in \bar{M}^{s,j}$ for any $0<\upsilon< \upsilon_j$, and $\phi^*(t, \bs{Q_j})$ has the desired properties.
\end{proof}

 In the previous proof we have denoted by $\bs{Q_j}$   the first intersection point of $M^u_+$ with $M^{s,j}$ (with respect to the arc length parametrization of $M^u_+$).  Let
$$
\upsilon^{*}_j:= \inf \{ \tilde \upsilon \in (\upsilon_{j-1},\upsilon_j) \mid \gamma^{u,+}(\upsilon) \in (\bar{M}^{s,j} \setminus \bar{M}^{s,{j-1}}) \,, \forall \upsilon \in (\tilde \upsilon,\upsilon_j)\}\,.
$$
 We denote by
$\bs{Q_{j-1}^*}=\gamma^{u,+}(\upsilon_{j}^*)$   the last intersection point between $M^{s,j-1}$ and $M^u_+$ ``before'' $\bs{Q_j}$.
Notice that  if $M^{s,j}\cap M^u_+$ consists only of the point $\bs{Q_j}$ we have  $\bs{Q_j}=\bs{Q_j^*}$.
Unfortunately, we cannot prove that $M^{s,j}\cap M^u_+=\{\bs{Q_j}\}$:  such a ``uniqueness'' property can be obtained under the additional assumptions of Theorem~\ref{main1bis}, which guarantee the validity of Lemma~\ref{transversal} below.

\medbreak

\noindent
\emph{Proof of Theorem~\ref{main1}.}
Let $\bs{Q^u}=\gamma^{u,+}(\upsilon)$ where  $\upsilon \in [\upsilon^*_j, \upsilon_j]$;
then $\phi^*(t,\bs{Q^u}) \in M^u_+ \subset T^+(\kappa(\eta))$ for any $t \le 0$, and $\phi^*(t,\bs{Q^u})\in \bar M^{s,j}$ for every $t>0$,
cf. Lemma~\ref{pictureneg}.

In particular $\gamma^{u,+}(\upsilon^*_j) \in M^{s,j-1}$ and $\gamma^{u,+}(\upsilon_j) \in M^{s,j}$ so that, when $t>0$, the solutions $\phi^*(t,\gamma^{u,+}(\upsilon^*_j))$ and $\phi^*(t,\gamma^{u,+}(\upsilon_j))$ follow the stable manifold $M^s$ towards the origin intersecting transversally the $y$ axis respectively $j-1$ and $j$ times. Thus we have two  \sol R{}f's  $u(r,\tilde D_k)$ and $u(r,D_k)$ with $D_{k-1}\leq \tilde D_k<D_k$ (the correct order is given by Lemma~\ref{manifold}) with respectively $j-1$ and $j$ nondegenerate zeros.

Consider now $\bs{Q^u}=\gamma^{u,+}(\upsilon)$ with  $\upsilon \in (\upsilon^*_j, \upsilon_j)$. In particular $\phi^*(t,\bs{Q^u})\notin M^s$ for every $t>0$.
 The solution $\phi^*(t,\bs{Q^u})$, forced to belong to $\bar M^{s,j}$ for $t>0$, is attracted towards $\bs{P^+}$ if $j$ is even, towards $\bs{P^-}$ if $j$ is odd, and, {\em guided} by the stable manifold $M^s$, crosses exactly $j$ times (transversally) the $y$ axis for $t > 0$.

Therefore the  corresponding solution $u(r,d^\upsilon)$ is a \sol R{}s which is positive for $r \le 1$ and changes sign exactly $j$ times for $r>1$.
\qed

\medbreak

The additional assumption required by Theorem~\ref{main1bis} forces, roughly speaking, the  unstable manifold $M^u$ to intersect the stable manifold $M^s$ passing from the inner part to the outer part of the spiral, thus giving a kind of uniqueness result.

\begin{lemma}\label{transversal}
Assume that we are in the hypothesis of Theorem~\ref{main1bis}.
Then $\gamma^{u,+}(\upsilon) \not\in \bar{M}^{s,j}$ for any $\upsilon>\upsilon_j$.
\end{lemma}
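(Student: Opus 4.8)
The plan is to prove a transversality estimate showing that along $M^u_+$ the field governing $M^s$ is never parallel to the tangent of $M^u_+$, with a \emph{constant} crossing orientation, and then to conclude by a Jordan-type argument that $M^u_+$ can never re-enter $\bar{M}^{s,j}$ after it has left it at $\bs{Q_j}$. First I would set up the two planar fields: write $F_i(x,y)=\big(\alpha_{l_i}x+y,\,-\eta x+\gamma_{l_i}y-K_i x|x|^{q_i-2}\big)$ for $i=1,2$, so that $M^u$ is an orbit of $F_1$ (the field of $({\rm S}_{l_1})$) and $M^s$ is an orbit of $F_2$ (the field of $({\rm S}_{l_2})$). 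Recall from Lemma~\ref{pictureneg} that, since $K_1<0$ and $2<l_1<{\rm I}(\eta)$, the branch $M^u_+$ is a graph over the positive $x$-semiaxis, is contained in $T^+(\kappa(\eta))\subset\{x>0\}$, and satisfies $\dot x_1=\alpha_{l_1}x+y>0$ along it.

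The heart of the argument is the wedge product $F_1\wedge F_2:=\dot x_1\dot y_2-\dot y_1\dot x_2$, which after using $\gamma_{l_i}=\alpha_{l_i}-(n-2)$ factors as
\[
\begin{aligned}
F_1\wedge F_2 =\;& (\alpha_{l_2}-\alpha_{l_1})\big(y^2+(n-2)xy+\eta x^2\big)\\
&{}-K_2 x|x|^{q_2-2}(\alpha_{l_1}x+y)+K_1 x|x|^{q_1-2}(\alpha_{l_2}x+y).
\end{aligned}
\]
I would then check that each summand is non-positive on $M^u_+$, the last two strictly negative; this is exactly where both extra hypotheses of Theorem~\ref{main1bis} enter. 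Since $\alpha_l=2/(l-2)$ is decreasing, $l_1\le l_2$ gives $\alpha_{l_2}-\alpha_{l_1}\le0$; writing a point of $M^u_+$ as $(x,-mx)$ with $x>0$, membership in $T^+(\kappa(\eta))$ forces $m<\kappa(\eta)$, so $y^2+(n-2)xy+\eta x^2=x^2\big(m^2-(n-2)m+\eta\big)>0$ because $\kappa(\eta)$ is the smaller root of $m^2-(n-2)m+\eta$; hence the first term is $\le 0$. For the second, $K_2>0$, $x>0$ and $\dot x_1>0$ make it strictly negative. For the third, the assumption $l_2\le{\rm I}(\eta)$ yields $\Lambda(l_2)=\alpha_{l_2}-\kappa(\eta)\ge 0$, so along $M^u_+$ one has $\alpha_{l_2}x+y>\alpha_{l_2}x-\kappa(\eta)x=\Lambda(l_2)x\ge 0$; with $K_1<0$ and $x>0$ this term is strictly negative too. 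Therefore $F_1\wedge F_2<0$ everywhere on $M^u_+$, so $M^u_+$ is nowhere tangent to an orbit of $F_2$; in particular it crosses $M^s$ transversally and always with the same orientation.

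Finally I would run the topological step. By Lemma~\ref{stella}, $\gamma^{u,+}(\upsilon)\in\bar{M}^{s,j}$ for $0<\upsilon<\upsilon_j$ and $\bs{Q_j}=\gamma^{u,+}(\upsilon_j)\in M^{s,j}$ is a transversal crossing, so $M^u_+$ \emph{exits} $\bar{M}^{s,j}$ at $\bs{Q_j}$. Suppose, for contradiction, that $\gamma^{u,+}(\upsilon)\in\bar{M}^{s,j}$ for some $\upsilon>\upsilon_j$; then there is a first re-entry parameter $\bar\upsilon>\upsilon_j$ at which $M^u_+$ crosses $\partial\bar{M}^{s,j}$ from outside to inside. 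Since $M^u_+\subset\{x>0\}$ while the rectilinear part of $\partial\bar{M}^{s,j}$ lies on the $y$-axis $\{x=0\}$, this crossing must occur on the spiral arc $M^{s,j}$, which is a single orbit-arc of $F_2$ bounding $\bar{M}^{s,j}$ on one fixed side. An entering crossing at $\bar\upsilon$ and the exiting crossing at $\bs{Q_j}$ have opposite orientations relative to the flow-oriented arc $M^{s,j}$, forcing $F_1\wedge F_2$ to take opposite signs at the two points, contradicting the constant sign established above. Hence $\gamma^{u,+}(\upsilon)\notin\bar{M}^{s,j}$ for every $\upsilon>\upsilon_j$.

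The main obstacle is the sign computation of $F_1\wedge F_2$: obtaining the clean factorization and recognizing that the non-positivity of \emph{all three} terms is precisely equivalent to the two added hypotheses $l_1\le l_2$ and $l_2\le{\rm I}(\eta)$, the remaining sign information coming from $K_1<0<K_2$ together with the geometry $M^u_+\subset T^+(\kappa(\eta))$ supplied by Lemma~\ref{pictureneg}. The subsequent orientation bookkeeping, matching ``constant sign of the wedge'' with ``never re-enters,'' is routine once one notes that $M^u_+$ cannot meet the $y$-axis segment of $\partial\bar{M}^{s,j}$.
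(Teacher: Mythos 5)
Your proposal is correct and follows essentially the same route as the paper: the paper's proof of Lemma~\ref{transversal} establishes exactly your transversality claim by computing $\left\langle \boldsymbol{\dot x_{l}^u} \,,\, J \boldsymbol{\dot x_{l}^s} \right\rangle$ on $T^+(\kappa(\eta))$ (which is $-(F_1\wedge F_2)$ in your notation, hence their $>0$ matches your $<0$), splits it into the same three terms, and uses $l_1\le l_2\le {\rm I}(\eta)$, $K_1<0<K_2$ and $M^u_+\subset T^+(\kappa(\eta))$ in the same way, before concluding with the same re-entry/orientation contradiction. Your only addition is making explicit that a re-entry cannot occur through the $y$-axis segment of $\partial\bar M^{s,j}$ because $M^u_+\subset\{x>0\}$, a point the paper leaves implicit.
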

\begin{proof}
We claim that {\bf the flows  of $({\rm S}_{l_1})$ and  $({\rm S}_{l_2})$ are transversal in any point $\Q \in T^+(\kappa(\eta))$}.
Let  $J = \left( \begin{array}{cc} 0&-1 \\ 1&0 \end{array}\right)$ and
$\boldsymbol Q = (x_Q,y_Q) \in T^+(\kappa(\eta))$, then we have
two solutions $\boldsymbol{ x_{l}^u}$ of (${\rm S}_{l_1}$) and $\boldsymbol{ x_{l}^s}$ of (${\rm S}_{l_2}$)
 passing through $\boldsymbol Q$ at $t=\tau_0$, i.e. $\boldsymbol Q =\boldsymbol{ x_{l}^u}(\tau_0)=\boldsymbol{ x_{l}^s}(\tau_0)$. We can compute the following scalar product
\begin{equation}\label{toout}
\begin{split}
& \left\langle \boldsymbol{\dot x_{l}^u}(\tau_0) \,,\, J \boldsymbol{\dot x_{l}^s}(\tau_0) \right\rangle =
(\alpha_{l_1}-\alpha_{l_2})[(n-2)x_Q y_Q + y_Q^2+ \eta x_Q^2]
\\ & \qquad\qquad\qquad -(\alpha_{l_2} x_Q + y_Q)K_1 x|x|^{q_1-2}+(\alpha_{l_1} x_Q + y_Q)K_2 x|x|^{q_2-2}
\end{split}
\end{equation}
Notice that if $\Q \in T(m)$, i.e. $y_Q=-m x_Q$ then
$$[y_Q^2+(n-2)x_Q y_Q +  \eta x_Q^2]= x^2[ m^2-(n-2)m +\eta] \ge 0$$
whenever $m \le \kappa(\eta)$. Since $\Q\in T^+(\kappa(\eta))$,
 remembering that $K_1<0$, and both $\alpha_{l_1},\alpha_{l_2}\geq \kappa(\eta)$ by the assumption $l_1\le l_2\leq {\rm I}(\eta)$;
 from~\eqref{toout} we find
$\left\langle \boldsymbol{\dot x_{l}^u}(\tau_0) \,,\, J \boldsymbol{\dot x_{l}^s}(\tau_0) \right\rangle>0$.
Hence, the claim is proved.

Assume for contradiction that there is $\tilde{\upsilon}>\upsilon_j$ such that $\gamma^{u,+}(\tilde{\upsilon}) \in \bar{M}^{s,j}$,
then there is  $\bar{\upsilon} \in (\upsilon_j, \tilde{\upsilon}]$ such that $\gamma^{u,+}(\bar{\upsilon})=\bs{\bar{Q}} \in M^{s,j}$ and
$\gamma^{u,+}(\upsilon)\not\in M^{s,j}$ for any $\upsilon \in (\upsilon_j, \bar{\upsilon})$, i.e. the manifold $M^{u}_+$ exit from $\bar{M}^{s,j}$
in $\bs{Q_j}$ and enters again in $\bar{M}^{s,j}$ in $\bs{\bar{Q}}$. It follows that
$\left\langle \boldsymbol{\dot \phi}^u (0,\bs{\bar{Q}}) \,,\, J \boldsymbol{\dot \phi}^s (0,\bs{\bar{Q}}) \right\rangle \leq 0$
which contradicts~\eqref{toout} since $\bs{\bar{Q}}\in M^{u}_+ \subset T_+(\kappa(\eta))$, so the Lemma is proved.
\end{proof}

\emph{Proof of Theorem~\ref{main1bis}.}
The proof follows step by step the one of Theorem~\ref{main1}. Then, as a straightforward consequence of the previous lemma, we find $\tilde D_k=D_{k-1}$ for every $k\geq 1$, thus completing the proof.
\qed

\medbreak


\medbreak

The proof of Theorems~\ref{main2} and~\ref{main2bis} can be obtained similarly: in this case the unstable manifold $M^u$  of (${\rm S}_{l_1}$) consists of a double spiral, while the stable manifold $M^s$ of (${\rm S}_{l_2}$) is unbounded, cf. Figures~\ref{posportrait} and~\ref{negportrait}.
The proof is then obtained arguing as in  Theorems~\ref{main1} and~\ref{main1bis},
but reversing time:
we leave  details to the reader. By the way, the proof can be obtained immediately also by the use of Kelvin inversion, see e.g.~\cite{DF,Fdie}.

Corollaries~\ref{cor1} and~\ref{cor2} are direct consequences of Theorems~\ref{main1bis} and~\ref{main2bis}.

\section{More on applications}\label{moreonappl}

\subsection{A slight generalization}\label{slight}
The whole analysis performed in Section~\ref{due} can be trivially extended to a
slightly more general family of potentials $f$. Let us denote by
\begin{equation}\label{defg}
K g_l(x,t)=f(x \eu^{-\alpha_l t}, \eu^t) \eu^{(\alpha_l+2)t} \, ,
\end{equation}
where $K \ne 0$ is a constant, we introduce the following assumption:
\begin{description}
\item[$\bs{G0}$] There is $l>2$ such that  $g_l(x,t)=g_l(x)$ is $t$-independent.
Further $g_l(0)=g'_l(0)=0$, $g_l(x)/x$ is positive, decreasing for $x<0$
and increasing for $x>0$ and $\lim_{x \to \pm \infty} g_l(x)/x=+ \infty$.
\end{description}
Note that morally we are assuming that $g_l$ is autonomous, convex for $x>0$ and it is odd.
If we apply~\eqref{transf} to
~\eqref{rad.hardy} we obtain again~\eqref{sist} where $g_l(x)$ replaces
 $ x|x|^{q-2}$.
The convexity condition requiring  that $g_l(x)/x$ is decreasing for $x<0$ and increasing for $x>0$ is needed in order to ensure the existence of the critical points $\bs {P^\pm}$, therefore it may be dropped in the case  where this point does not exist (i.e. either  $K>0$ and
 $l \in (2, 2_*(\eta)] \cup [I(\eta),+\infty)$ or $K<0$ and $l \in [2^*(\eta),I(\eta)]$).

 A class of $f$ which fits $\bs{G0}$ (besides of~\eqref{auto}) is given
by the following
\begin{equation}\label{strangef}
f(u,r)= K_1 r^{\delta_1}u|u|^{q_1-2}+K_2 r^{\delta_1}u|u|^{q_2-2}+
K_3 r^{\delta_3} u|u|^{q_3-2} \ln(1+|u|r^{\delta})
\end{equation}
where $q_1,q_2>2$, $q_3\geq2 $,  $K_i \ge 0$ for $i=1,2,3$, $\sum K_i^2 >0$, $2 \frac{q_i+\delta_i}{2+\delta_i}=l$ for $i=1,2,3$  and $\delta=\alpha_l$.

In fact we can also assume that $f$ is not odd in $u$; we may even
  have two different values of $l$, say $l_a, l_b$, the former for
  $u$ positive, the latter for $u$ negative.
  However we need $l_a, l_b$ to belong to the same range of parameters, i.e.
  $l_a,l_b \in (2_*(\eta), I^*(\eta))$, and
$g_{l_a}(x,t)$ and $g_{l_b}(x,t)$ are $t$ independent respectively for $x\le 0$
and for $x \ge 0$. We do not enter in more details for briefness.

\subsection{Some further Remarks}\label{furtherrem}

Let us consider the following equation.
Let $\rho>0$ and consider the following generalization of \eqref{rad.hardy}:
 \begin{equation}\label{types2}
 \begin{array}{l}
\ds u''+ \frac{n-1}{r}\, u'+\frac{\eta}{r^2}\, u + f(u,r)=0\,, \\[2mm]
\hspace{20mm}
f(u,r)=
 \begin{cases}
K u |u|^{q_1-2} & r \le \rho \\
-K  u |u|^{q_2-2} & r>\rho \,.
 \end{cases}
 \end{array}
 \end{equation}
We recall that Theorems \ref{main1}, \ref{main1bis}, \ref{main2}, \ref{main2bis} hold in this context too, as we specified in the introduction.
The analysis can be easily extended to embrace the general case of $f$ of type \eqref{types},
but have decided to restrict our attention to~\eqref{types2} to make the argument more transparent.

Assume for definiteness that we are in the hypotheses of Theorem \ref{main1}, so that there exists a \sol R{0}f solution $u(r,D_0)$.
We denote by $R_0(\rho,K)$ the value of $r$ such that $u(r,D_0)$ attains its maximum,  and by $U_0(\rho,K)=u(R_0(\rho,K), D_0)$ the maximum itself. We will denote by $D_0(\rho,K)$ the value $D_0$ in order to emphasize its dependence of the parameters $\rho$ and $K$.
In this subsection we want to establish the relationship between such parameters  and the values
$R_0(\rho,K)$, $U_0(\rho,K)$, $D_0(\rho,K)$.
These results are elementary but may be of use from an application point of view, especially for their simplicity.
Remember that $K$ represents the ratio between the velocity of the reaction and of the diffusion, while $\rho>0$ represents
the size of the set where we have production.

From a straightforward computation, we easily get the 
following well known scaling property of equation~\eqref{types2}.

\begin{remark}\label{scalingrem}
Let us consider a radial solution $u(r)$ of~\eqref{types2} where $K=1$ and $\rho=1$, then
 $w(r)=  [\bar{\rho}^{2} \bar{K}]^{-1/(q_1-2)}u(r/\bar\rho)$ is a
radial solution of~\eqref{types2} where $K=\bar{K}$, and $\rho=\bar{\rho}>0$.
\end{remark}

\noindent
Then, using the previously introduced notations, we get
\begin{equation}\label{pp}
\begin{array}{l}
         R_0(K,\rho)=\rho R_0(1,1) \, , \\[1mm]
         U_0(K,\rho)= [ \rho^{2} K]^{-1/(q_1-2)} U_0(1,1) \, , \\[1mm]
         D_0(K,\rho)= [\rho^{2} K]^{-1/(q_1-2)} D_0(1,1)  \, .
 \end{array}
\end{equation}

Assume that we are in the setting of Theorem~\ref{main1bis} (respectively of Theorem~\ref{main2bis}).
Estimates in~\eqref{pp} show explicitly that the maxima $U_0$  decrease with the size of the bounded region where
we have production (respectively absorption), and also if diffusion gets stronger.
I. e., the ground states gets more concentrated and have larger maxima if the bounded region is smaller.
The same happens to the initial conditions
$D_0$, while the value $R_0$ at which the maxima is attained is not influenced by the ratio
between strength of the reaction and diffusion.

We think it is worth observing that the dependence of $U_0$ on $K$, and $\rho$ does not change if we have absorption
for $r \le \rho$ and production outside, as in Theorem \ref{main1bis}, or in the opposite situation, as in Theorem \ref{main2bis}.



\begin{thebibliography}{OSS2}
\bibitem{AQ}  S. Alarcon and  A. Quaas, {\em Large number of fast decay  ground states
to Matukuma-type equations},
 J. Differential Equations {\bf 248} (2010),
866--892.

\bibitem{Bae2}  S. Bae, \emph{ On positive entire solutions of indefinite semilinear elliptic equations},
J. Differential Equations {\bf 247} (2009),
 1616--1635.

\bibitem{Bae1}  S. Bae, \emph{Classification of positive solutions of semilinear elliptic equations with Hardy term},
Discrete Contin. Dyn. Syst. Supplement (2013), 31--39.


\bibitem{BJ2}   F. Battelli and R. Johnson, {\em On positive solutions of the scalar
  curvature equation when the curvature has variable sign}, Nonlinear Anal. {\bf 47} (2001), 1029--1037.

\bibitem{Bv} M. F. Bidaut-V\'eron, {\em Local and global behavior of solutions of quasilinear equations of
 Emden-Fowler type}, Arch. Ration. Mech. Anal. {\bf 107} (1989), 293--324.

\bibitem{B1} G. Bianchi, {\em  Non-existence of positive solutions to semilinear elliptic equations
  on $\RR^n$ or $\RR^n_+$ through the method of moving planes},
  Comm. Partial Differential Equations {\bf 22} (1997), 1671--1690.

\bibitem{BGH} D. Bonheure D., J. Gomes  and P. Habets,
{\em Multiple positive solutions of superlinear elliptic problems with sign-changing weight},
J. Differential Equations {\bf 214} (2005), 
36--64.


\bibitem{CFG}  P. Cac, A. Fink and J. Gatica, {\em Nonnegative solutions of the radial Laplacian with nonlinearity that changes sign}, Proc. Amer. Math. Soc. \textbf{123}
(1995), 1393--1398.



\bibitem{CDZ} A. Capietto, W. Dambrosio and F. Zanolin,
{\em Infinitely many radial solutions to a boundary value problem in a ball},
Ann. Mat. Pura Appl. {\bf 179} (2001), 159--188.

\bibitem{Cirstea} F. Cîrstea, {\em A complete classification of the isolated singularities for nonlinear elliptic equations with inverse square potentials}, Mem. Amer. Math. Soc. {\bf 227} (2014), no. 1068


\bibitem{CodLev} E. Coddington and N. Levinson, {\em Theory of Ordinary Differential
 Equations}, Mc Graw Hill, New York, 1955.

\bibitem{DF}
 F. Dalbono and M. Franca, {\em Nodal solutions for supercritical Laplace equations}, Commun. Math. Phys.,  DOI: 10.1007/s00220-015-2546-y.


\bibitem{DLL} Y.-B. Deng, Y. Li and Y. Liu, {\em On the stability of the
    positive radial steady states for a semilinear Cauchy problem},
 Nonlinear Anal. {\bf 54} (2003), 291--318.

\bibitem{DN}  W.Y. Ding and W.M. Ni,  {\em On the elliptic equation
  $\Delta u+ Ku^{\frac{n+2}{n-2}}=0$ and related topics}, Duke Math. J. {\bf 52} (1985), 485--506.



\bibitem{DMM} Z. Dosla, M. Marini and S. Matucci
{\em Positive solutions of nonlocal continuous second order BVP's},
Dynam. Systems Appl. {\bf 23} (2014),
 431--446.


\bibitem{FMT1} V. Felli, E. Marchini and S. Terracini, {\em Fountain-like solutions for nonlinear elliptic equations with critical growth and Hardy potential} Commun. Contemp. Math. {\bf 7} (2005), 
867--904. 

\bibitem{FMT}  V. Felli, E. Marchini and S. Terracini,  {\em On the behavior of solutions to Schrödinger equations with dipole type potentials near the singularity}, Discrete Contin. Dyn. Syst. {\bf 21} (2008), 
91--119.  



\bibitem{FGazz} {A. Ferrero and F. Gazzola}, {\em Existence of solutions for singular critical growth semilinear elliptic equations}, {J. Differential Equations} {\bf 177} (2001), {494}--{522}.

\bibitem{Fow} R.H. Fowler, {\em Further studies of Emden's and similar differential equations}, Quart. J. Math. {\bf 2} (1931), 259--288.

\bibitem{F1}  M. Franca, {\em Classification of positive solution of $p$-Laplace
  equation with a  growth term}, Arch.
  Math. (Brno) \textbf{40} 
   (2004), 415--434.

\bibitem{F6} M. Franca,
{\em Non-Autonomous Quasilinear Elliptic Equations and Wazewski's principle},
 Topol. Methods Nonlinear Anal. \textbf{23},
(2004), 213--235.

\bibitem{Fduegen}   M. Franca, {\em Radial ground states and singular ground states for a spatial
dependent $p$-Laplace equation}, J. Differential Equations
{\bf 248} (2010),  2629--2656

\bibitem{Fprep} M. Franca, {\em Positive solutions for semilinear elliptic equations
 with mixed non-linearities: 2 simple models exhibiting several bifurcations},
  J. Dynam.  Differential Equations \textbf{23} (2011), 573--611

\bibitem{Fdie}  M. Franca, {\em Positive solutions of semilinear elliptic
  equations: a dynamical approach}, Differential Integral Equations  {\bf 26} (2013), 505--554.


\bibitem{GNN}  B. Gidas, W.M. Ni and L. Nirenberg, {\em Symmetry of positive solutions
  of nonlinear elliptic equations in $\RR^n$},
Advances in Math. Suppl. Studies
  \textbf{7A} (1981), 369--402.


\bibitem{JPY1}
    R. Johnson, X.B. Pan and Y.F. Yi,  \emph{Singular ground states of semilinear elliptic equations via invariant manifold theory},
     Nonlinear Anal.
    \textbf{20} (1993),
    1279--1302.

\bibitem{JPY2}
    R. Johnson, X.B. Pan and Y.F. Yi,  \emph{Positive solutions of super-critical elliptic equations and asymptotics},
    Comm. Partial Differential Equations \textbf{18} (1993),
     977--1019.

\bibitem{JPY3}
    R. Johnson, X.B. Pan and Y.F. Yi, \emph{Singular solutions of the elliptic equation $\Delta u-u+u^p=0$},
     Ann. Mat. Pura Appl. (4) {\bf 166} (1994), 203--225.

\bibitem{JK} C. Jones and T. K$\ddot{ \rm u}$pper, {\em On the infinitely many solutions of a semilinear elliptic equation},
  SIAM J. Math. Anal. {\bf 17} (1986), 803--835.

\bibitem{LN} Y. Li and W. Ni,
\emph{Radial symmetry of positive solutions of nonlinear elliptic equations in $\RR^n$},
Comm. Partial Differential Equations {\bf18} (1993), 
 1043--1054.

\bibitem{MaMa}  M. Marini and S. Matucci
{\em A boundary value problem on the half-line for superlinear differential equations with changing sign weight},
 Rend. Istit. Mat. Univ. Trieste {\bf 44} (2012), 117--132.

\bibitem{Serena1} S. Matucci, \emph{A new approach for solving nonlinear BVP's on the half-line for second order equations and applications},
 Math. Bohem. {\bf 140} (2015), 
  153--169.

\bibitem{MoSc} V. Moroz and J. Van Scahftingen,
{\em Existence, stability and oscillation properties of slow decay solutions of supercritical
elliptic equations with Hardy potentials}, Proc. Edinb. Math. Soc. \textbf{58} (2015), 
 255--271.

\bibitem{Mu1} J. D. Murray, \emph{Mathematical biology. I. An introduction. Third edition.} Interdisciplinary Applied Mathematics, 17. Springer, New York, 2002.
\bibitem{Mu2}  J. D. Murray, \emph{Mathematical biology. II. Spatial models and biomedical applications. Third edition.} Interdisciplinary Applied Mathematics, 18. Springer, New York, 2003.


\bibitem{Po}
S. I. Pohozaev, {\em Eigenfunctions of the equations $\Delta u+ \lambda f(u)=0$}, Soviet
Math. Dokl. \textbf{5} (1965), 1408--1411.

\bibitem{Terr96} {S. Terracini}, {\em On positive entire solutions to a class of equations with a singular coefficient and critical exponent}, {Adv. Differential Equations} {\bf 1} ({1996}), {241}--{264}.	

\bibitem{W93} X. Wang, \emph{On the Cauchy problem for
    reaction-diffusion equations}, Trans. Amer. Math. Soc. \textbf{337} 
  (1993), 549--590.

\bibitem{Y1996} E. Yanagida, Structure of radial solutions to $\Delta u +K(|x|) |u|^{p-1}u=0$ in $\RR^{n}$,
   SIAM J. Math. Anal.
  \textbf{27} 
   (1996), 997--1014.




\end{thebibliography}
\end{document}